\DeclareFontFamily{OT2}{cmr}{\hyphenchar\font45 }
\DeclareFontShape{OT2}{cmr}{m}{l}{%
<5><6><7><8><9>gen*wncyr%
<10><10.95><12><14.4><17.28><20.74><24.88>wncyr10}{}
\DeclareMathAlphabet{\mathcyr}{OT2}{cmr}{m}{l}
\DeclareMathAlphabet{\mathcyb}{OT2}{cmr}{b}{l}
\SetMathAlphabet{\mathcyr}{bold}{OT2}{cmr}{b}{l}
\newtheorem{thm}{Theorem}[section]
\newtheorem{lem}[thm]{Lemma}
\newtheorem{prop}[thm]{Proposition}
\newtheorem{cor}[thm]{Corollary}
\newtheorem{conj}[thm]{Conjecture}
\theoremstyle{definition}
\theoremstyle{remark}
\newcommand{\sha}{\mathbin{\widetilde{\mathcyr{sh}}}}
\newcommand{\sh}{\mathbin{\mathcyr{sh}}}
\DeclareMathOperator{\reg}{reg}
\DeclareMathOperator{\im}{Im}
\begin{document}

\title[$t$-adic SMZVs for indices in which $1$ and $3$ appear alternately]{$t$-adic symmetric multiple zeta values for indices in which $1$ and $3$ appear alternately}

\author{Minoru Hirose}
\address[Minoru Hirose]{Institute for Advanced Research, Nagoya University, Furo-cho, Chikusa-ku, Nagoya, 464-8602, Japan}
\email{minoru.hirose@math.nagoya-u.ac.jp}

\author{Hideki Murahara}
\address[Hideki Murahara]{The University of Kitakyushu, 4-2-1 Kitagata, Kokuraminami-ku, Kitakyushu, Fukuoka, 802-8577, Japan}
\email{hmurahara@mathformula.page}

\author{Shingo Saito}
\address[Shingo Saito]{Faculty of Arts and Science, Kyushu University, 744, Motooka, Nishi-ku, Fukuoka, 819-0395, Japan}
\email{ssaito@artsci.kyushu-u.ac.jp}

\keywords{multiple zeta values, symmetric multiple zeta values, $t$-adic symmetric multiple zeta(-star) values}
\subjclass[2010]{Primary 11M32; Secondary 05A19}

\begin{abstract}
 We consider the symmetric multiple zeta values in $\mathcal{S}_m$ without modulo $\pi^2$ reduction for indices in which $1$ and $3$ appear alternately.
 We investigate those values that can be expressed as a polynomial of the Riemann zeta values,
 and give a conjecturally complete list of explicit formulas for such values.
\end{abstract}

\maketitle

\tableofcontents

\section{Introduction}
\subsection{Multiple zeta values and $t$-adic multiple zeta values}
An index is a finite (possibly empty) sequence of positive intgers.
We say that an index is admissible if either it is empty or its last component is greater than $1$.

For each admissible index $\boldsymbol{k}=(k_1,\dots,k_r)$, the multiple zeta value (MZV) is defined by
\[
 \zeta(\boldsymbol{k})=\zeta(k_1,\dots,k_r)=\sum_{1\le n_1<\dots<n_r}\frac{1}{n_1^{k_1}\dotsm n_r^{k_r}}\in\mathbb{R},
\]
where we understand that $\zeta(\emptyset)=1$.
Let $\mathcal{Z}$ denote the $\mathbb{Q}$-linear subspace of $\mathbb{R}$ spanned by all MZVs (including $1$).

For each (not necessarily admissible) index $\boldsymbol{k}$,
we write $\zeta^*(\boldsymbol{k})$ and $\zeta^{\sh}(\boldsymbol{k})$ for the real numbers in $\mathcal{Z}$
obtained by taking the constant terms of the harmonic and shuffle regularizations, respectively (see \cite{IKZ06} for details).
Note that $\zeta^*(1)=\zeta^{\sh}(1)=0$ and that $\zeta^*(\boldsymbol{k})=\zeta^{\sh}(\boldsymbol{k})=\zeta(\boldsymbol{k})$ if $\boldsymbol{k}$ is admissible.

For each index $\boldsymbol{k}=(k_1,\dots,k_r)$ and each symbol $\bullet\in\{*,\sh\}$, we define
\[
 \zeta_{m}^{\bullet}(\boldsymbol{k})
 =\sum_{\substack{l_1,\dots,l_r\ge0\\l_{1}+\dots+l_r=m}}\zeta^{\bullet}(k_1+l_1,\dots,k_r+l_r)\prod_{i=1}^{r}\binom{k_i+l_i-1}{l_i}
\]
for each nonnegative integer $m$, define the $t$-adic symmetric multiple zeta value ($t$-adic SMZV) by
\[
 \zeta_{\widehat{\mathcal{S}}}^{\bullet}(\boldsymbol{k})
 =\sum_{i=0}^{r}(-1)^{k_{i+1}+\dots+k_r}\zeta^{\bullet}(k_1,\dots,k_i)\sum_{m\ge0}\zeta_{m}^{\bullet}(k_r,\dots,k_{i+1})t^m\in\mathcal{Z}[[t]],
\]
and define the SMZV in $\mathcal{S}_m$ by
\[
 \zeta_{\mathcal{S}_m}^{\bullet}(\boldsymbol{k})=\pi_m(\zeta_{\widehat{\mathcal{S}}}^{\bullet}(\boldsymbol{k}))\in\mathcal{Z}[[t]]/t^m
\]
for each positive integer $m$, where $\pi_m\colon \mathcal{Z}[[t]]\to\mathcal{Z}[[t]]/t^m$ is the natural projection.
Here we understand that $\zeta_{\widehat{\mathcal{S}}}^{\bullet}(\emptyset)=1$.
The $t$-adic SMZVs have been studied in \cite{Jar19}, \cite{Ros19}, and \cite{OSY20} as a generalization of ordinary SMZVs (the $t$-adic SMZVs for $t=0$; see \cite{Kan19} and \cite{KZ19}).

In this paper, we concentrate on indices $\boldsymbol{k}$ in which $1$ and $3$ appear alternately, such as $(3)$, $(1,3,1)$, and $(3,1,3,1)$.
As a consequence, for all indices that appear in this paper, the values of $\zeta^*$ and $\zeta^{\sh}$ coincide, and so do the values of $\zeta_{\widehat{\mathcal{S}}}^*$ and $\zeta_{\widehat{\mathcal{S}}}^{\sh}$ (see Proposition~\ref{astsh});
we therefore often omit $*$ and $\sh$ hereinafter.

Let $\boldsymbol{k}$ be an index in which $1$ and $3$ appear alternately.
It is known that $\zeta(\boldsymbol{k})$ can be written as polynomials of the Riemann zeta values (see \cite{BBBL98}, \cite{BB03}, \cite{BY18}, and \cite{BC20}).
It turns out, however, that this is not always the case for $\zeta_{\widehat{\mathcal{S}}}(\boldsymbol{k})$.
Our results, to be described in precise terms in the next subsection,
explicitly write the value $\zeta_{\mathcal{S}_m}(\boldsymbol{k})$ as a polynomial of the Riemann zeta values for as large $m$ as practically possible.

We remark that our theorems and conjectures given in the next subsection concerning the $t$-adic SMZVs modulo $\pi^2$ may also well be valid for the $\boldsymbol{p}$-adic finite MZVs since the $t$-adic SMZVs modulo $\pi^2$ and the $\boldsymbol{p}$-adic finite MZVs are conjecturally isomorphic (for details, see \cite{Kan19}, \cite{KZ19}, and \cite{OSY20}; see also \cite{SW16} and \cite{MOS20} for previous works on $\boldsymbol{p}$-adic finite MZVs).

\subsection{Statements of our main theorems}
We now give precise statements of our main theorems and conjectures.
The proofs of the theorems will be given in the next section.

Let $\boldsymbol{k}$ be an index in which $1$ and $3$ appear alternately.
According to its first and last components, we divide into the following four cases:
$\boldsymbol{k}=(\{3,1\}^n,3),(\{1,3\}^n),(\{3,1\}^n),(\{1,3\}^n,1)$,
where $n$ is a nonnegative integer and $\{a,b\}^n$ denotes the $n$ times repetition of $a,b$,
e.g., $\{a,b\}^2=(a,b,a,b)$.

\subsubsection{Case of $\boldsymbol{k}=(\{3,1\}^n,3)$}
Consider the case where $\boldsymbol{k}$ both starts and ends with $3$, i.e., $\boldsymbol{k}$ is of the form $(\{3,1\}^n,3)$.
Then we have $\zeta_{\mathcal{S}_1}(\boldsymbol{k})=0$ by definition.
Since the coefficient of $t$ in $\zeta_{\mathcal{S}_2}(3,1,3)$ is congruent to
\[
 -5\zeta(3)\zeta(5)-\zeta(3,5)
\]
modulo $\pi^2$ (here and throughout, we have used \cite{BBV10} in numerical computations),
it is reasonable to believe that $\zeta_{\mathcal{S}_2}(3,1,3)$ cannot be written as a polynomial of the Riemann zeta values even when reduced modulo $\pi^2$, and so we do not investigate $\zeta_{\mathcal{S}_m}(\{3,1\}^n,3)$ for $m\ge2$ in this paper.

\subsubsection{Case of $\boldsymbol{k}=(\{1,3\}^n)$}
Consider the case where $\boldsymbol{k}$ starts with $1$ and ends with $3$, i.e., $\boldsymbol{k}$ is of the form $(\{1,3\}^n)$.
Then we can compute $\zeta_{\mathcal{S}_2}(\boldsymbol{k})$ explicitly as follows (remember that $\zeta(1)=0$):
\begin{thm} \label{main0}
 We have
 \begin{align*}
  \zeta_{\mathcal{S}_2}(\{1,3\}^n)
  &=\frac{2(-4)^n}{(4n+2)!}\pi^{4n}+\biggl(\sum_{\substack{n_0,n_1\ge0\\n_0+n_1=n}}\frac{(-4)^{n_0+1}(2-(-4)^{-n_1})}{(4n_0+2)!}\pi^{4n_0}\zeta(4n_1+1)\\
  &\hphantom{=\frac{2(-4)^n}{(4n+2)!}\pi^{4n}+\biggl(}-(-1)^n\sum_{\substack{n_0,n_1\ge0\\n_0+n_1=2n\\n_0,n_1:\mathrm{odd}}}\frac{2^{n_0-n_1+2}}{(2n_0+2)!}\pi^{2n_0}\zeta(2n_1+1)\biggr)t
 \end{align*}
 for every nonnegative integer $n$.
\end{thm}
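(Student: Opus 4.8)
The plan is to work directly from the definition of $\zeta_{\widehat{\mathcal{S}}}$ and extract only the coefficients of $t^{0}$ and $t^{1}$, since $\zeta_{\mathcal{S}_2}=\pi_2(\zeta_{\widehat{\mathcal{S}}})$ forgets everything of order $t^{2}$ and higher; by Proposition~\ref{astsh} I may work throughout with the shuffle regularization and suppress the symbol $\sh$. Writing $\boldsymbol{k}=(k_1,\dots,k_{2n})=(\{1,3\}^n)$ and using that every entry is odd, the sign $(-1)^{k_{i+1}+\dots+k_{2n}}$ equals $(-1)^{i}$. Separating $i=2j$ from $i=2j+1$, the prefix $\zeta(k_1,\dots,k_i)$ runs over the two families $\zeta(\{1,3\}^{j})$ and $\zeta(\{1,3\}^{j},1)$, while the reversed tail runs over $\zeta_m(\{3,1\}^{n-j})$ and $\zeta_m(\{3,1\}^{n-j-1},3)$. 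Thus
\[
 \zeta_{\widehat{\mathcal{S}}}(\{1,3\}^n)=\sum_{j=0}^{n}\zeta(\{1,3\}^{j})\,\Xi(\{3,1\}^{n-j})-\sum_{j=0}^{n-1}\zeta(\{1,3\}^{j},1)\,\Xi(\{3,1\}^{n-j-1},3),
\]
where $\Xi(\boldsymbol{l})=\sum_{m\ge0}\zeta_m(\boldsymbol{l})t^m$; modulo $t^2$ only $\zeta_0=\zeta$ and $\zeta_1$ survive, so the whole computation reduces to a bilinear convolution of exactly the four families isolated in the case division. I would encode this with generating functions in a variable $x$ tracking weight in blocks of four.

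For the coefficient of $t^{0}$, which is the ordinary symmetric multiple zeta value, set $f(x)=\sum_{j\ge0}\zeta(\{1,3\}^{j})x^{4j}$, $g(x)=\sum_{j\ge1}\zeta(\{1,3\}^{j},1)x^{4j+1}$, $\phi(x)=\sum_{m\ge0}\zeta(\{3,1\}^{m})x^{4m}$, and $\psi(x)=\sum_{m\ge0}\zeta(\{3,1\}^{m},3)x^{4m+3}$; the sum defining $g$ starts at $j=1$ because $\zeta(1)=0$. The $t^0$-part of the generating function is then $f(x)\phi(x)-g(x)\psi(x)$. Each of these four series has a closed hyperbolic--trigonometric form supplied by the classical evaluations of alternating $\{1,3\}$-multiple zeta values; in particular $f(x)=\bigl(\cosh(\pi x)-\cos(\pi x)\bigr)/(\pi x)^2$, equivalent to $\zeta(\{1,3\}^{j})=2\pi^{4j}/(4j+2)!$. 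Multiplying the four closed forms, the non-$\pi$ contributions cancel and the product collapses to the series with $x^{4n}$-coefficient $2(-4)^n\pi^{4n}/(4n+2)!$; concretely it equals $\bigl(\cosh w-\cos w\bigr)/w^2$ after the substitution $w=(1+i)\pi x$, which is precisely the claimed constant term.

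For the coefficient of $t^{1}$ the tail contributes the once-shifted sums $\zeta_1(\{3,1\}^{m})$ and $\zeta_1(\{3,1\}^{m},3)$, each of which increments a single component ($3\mapsto4$ or $1\mapsto2$, weighted by the original entry) and is therefore no longer a pure $\{1,3\}$-index. The crucial step is to evaluate their generating functions $\phi_1(x)=\sum_{m}\zeta_1(\{3,1\}^{m})x^{4m}$ and $\psi_1(x)=\sum_{m}\zeta_1(\{3,1\}^{m},3)x^{4m+3}$ in closed form; I would obtain these by a first-order parameter derivative of the multivariable (hypergeometric) generating identities of \cite{BBBL98} for alternating strings with shifted arguments, which introduces the single odd Riemann zeta values. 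The $t^{1}$-part of the convolution is $f(x)\phi_1(x)-g(x)\psi_1(x)$, and expanding this product of an elementary series against a $\zeta(\mathrm{odd})$-valued series yields the two sums of the statement; the clean blocks of four in the $\zeta(4n_1+1)$-sum, and the blocks of two with the parity restriction $n_0,n_1$ odd together with the overall $(-1)^n$ in the $\zeta(2n_1+1)$-sum, reflect the $\cosh/\cos$ versus $\sinh/\sin$ structure of the constituent series.

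The hard part is this last step: obtaining clean closed forms for the shifted generating functions $\phi_1,\psi_1$ and then correctly bookkeeping the signs, the powers of $\pi$, and the parity conditions when the two convolution products are expanded into the stated double sums. By contrast, the reduction to the four families and the entire $t^0$-term are essentially formal manipulations on top of the already-known evaluations of alternating $\{1,3\}$-multiple zeta values.
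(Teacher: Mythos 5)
Your reduction of $\zeta_{\widehat{\mathcal{S}}}(\{1,3\}^n)$ to a convolution of the four families $\zeta(\{1,3\}^{j})$, $\zeta(\{1,3\}^{j},1)$, $\zeta_m(\{3,1\}^{n-j})$, $\zeta_m(\{3,1\}^{n-j-1},3)$ is correct, and the $t^0$ part does go through (the required cancellation indeed occurs, though of your four series only $f$ is elementary; $\phi$, $\psi$, $g$ involve odd zeta values via \cite{BB03} and \cite{BC20}). The fatal gap is the $t^1$ step: the closed forms you posit for $\phi_1(x)=\sum_m\zeta_1(\{3,1\}^m)x^{4m}$ and $\psi_1(x)=\sum_m\zeta_1(\{3,1\}^m,3)x^{4m+3}$ do not exist, so no manipulation of the identities of \cite{BBBL98} can produce them. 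Concretely, look at the first nontrivial coefficient of $\psi_1$, namely $\zeta_1(3,1,3)=3\zeta(4,1,3)+\zeta(3,2,3)+3\zeta(3,1,4)$. Unpacking the definition of $\zeta_{\widehat{\mathcal{S}}}$ for the index $(3,1,3)$ exactly as you did for $(\{1,3\}^n)$ shows that the coefficient of $t$ in $\zeta_{\mathcal{S}_2}(3,1,3)$ equals
\[
 -\zeta_1(3,1,3)+\zeta(3)\zeta_1(3,1)-\zeta(3,1)\zeta_1(3),
\]
where the last two terms are polynomials in Riemann zeta values (all regularized MZVs of weight at most $5$ are; in fact $\zeta_1(3,1)=-\tfrac92\zeta(5)+\zeta(2)\zeta(3)$ and $\zeta_1(3)=3\zeta(4)$). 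Since the paper records that this coefficient is congruent to $-5\zeta(3)\zeta(5)-\zeta(3,5)$ modulo $\pi^2$, it follows that
\[
 \zeta_1(3,1,3)\equiv\tfrac12\zeta(3)\zeta(5)+\zeta(3,5)\pmod{\pi^2}.
\]
Thus $\psi_1$ already fails at weight $8$ to take values in the ring of polynomials in Riemann zeta values, assuming (as the paper does for all its negative assertions) the irreducibility of $\zeta(3,5)$; this is the very obstruction that makes the authors abandon $\zeta_{\mathcal{S}_m}(\{3,1\}^n,3)$ for $m\ge2$. The irreducible values cancel only in the full combination $f\phi_1-g\psi_1$, never factor by factor, so any strategy that evaluates the two factors separately is doomed.

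This also explains why the paper's proof of Theorem~\ref{main0} looks nothing like its proof of Theorem~\ref{main1}, where your direct-convolution strategy is exactly what the authors do: for $(\{3,1\}^n)$ the reversed tails are $\{1,3\}$-leading, and $\zeta_1,\zeta_2$ of those indices \emph{are} reducible (Lemmas~\ref{lem:zeta_1(1,3,1,3)_gen}, \ref{lem:zeta_1(1,3,1)_gen}, \ref{lem:zeta_2(1,3,1,3)_gen}, \ref{lem:zeta_2(1,3,1)_gen}) because the alternating shuffle sums of Lemmas~\ref{wordA}--\ref{word2} reach precisely the words $x(y^2x^2)^k$, $x^2(y^2x^2)^k$, etc., and not the words $x(yx^2y)^m$ that your $\phi_1,\psi_1$ would require. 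For $(\{1,3\}^n)$ the paper therefore switches to the harmonic side: the coefficient of $t$ is $\zeta(I_1(\{1,3\}^n))$, the reversal symmetry of Lemma~\ref{lem:I_1_reversal_sum} combined with Lemmas~\ref{lem:I_0(a,b,a,b)} and \ref{lem:sha_alternating_sum} evaluates the \emph{sum} $\zeta(I_1(\{1,3\}^n))+\zeta(I_1(\{3,1\}^n))=(-1)^n\zeta_1(\{4\}^n)$ in closed form, and subtracting $\zeta(I_1(\{3,1\}^n))$, known from the already-proved Theorem~\ref{main1}, finishes the proof. If you wish to keep your generating-function framework, that symmetry --- which pins down exactly the combination $f\phi_1-g\psi_1$ your convolution needs, without ever knowing $\phi_1$ or $\psi_1$ individually --- is the missing ingredient.
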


Recall that Ono, Sakurada, and Seki \cite[Theorem~4.1]{OSS21} computed the values of $\zeta_{\mathcal{S}_2}$ modulo $\pi^2$ for a wider class of indices.
What makes Theorem~\ref{main0} interesting is that it computes the values without modulo $\pi^2$ reduction.

Since the coefficient of $t^2$ in $\zeta_{\mathcal{S}_3}(1,3,1,3)$ is
\[
 \frac{1}{2}\zeta(2) \zeta(3) \zeta(5)+\zeta(2) \zeta(3,5)-\frac{1}{2}\zeta(3)^2 \zeta(4)
 -\frac{1}{4}\zeta(3) \zeta(7)+\frac{81}{8}\zeta(5)^2 -\frac{103 }{10}\zeta(10),
\]
it is reasonable to believe that $\zeta_{\mathcal{S}_3}(1,3,1,3)$ cannot be written as a polynomial of the Riemann zeta values.
Nevertheless, numerical experiments suggest that $\zeta_{\mathcal{S}_3}(\{1,3\}^n)$ modulo $\pi^2$ can always be written as a polynomial of the Riemann zeta values, and we make the following conjecture:
\begin{conj}\label{conj:zeta_S3(1,3,1,3)}
 We have
 \begin{align*}
  \zeta_{\mathcal{S}_3}(\{1,3\}^n)
  &\equiv\delta_{n,0}+(2(-4)^{-n}-4)\zeta(4n+1)t\\
  &\hphantom{{}\equiv{}}+\biggl(-2(-4)^{-n}\sum_{\substack{n_1,n_2\ge0\\n_1+n_2=n-1}}\zeta(4n_1+3)\zeta(4n_2+3)\\
  &\hphantom{{}\equiv{}+\biggl(}+2\sum_{\substack{n_1,n_2\ge0\\n_1+n_2=n}}((-4)^{-n_1}-2)((-4)^{-n_2}-2)\zeta(4n_1+1)\zeta(4n_2+1)\biggr)t^2\pmod{\pi^2}
 \end{align*}
 for every nonnegative integer $n$, where $\delta_{n,0}$ denotes the Kronecker delta.
\end{conj}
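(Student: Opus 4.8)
The congruence asserted for the coefficients of $t^0$ and $t^1$ follows at once from Theorem~\ref{main0} by reduction modulo $\pi^2$, so I would dispose of these first. In the coefficient of $t^0$ the single term $\tfrac{2(-4)^n}{(4n+2)!}\pi^{4n}$ reduces to $\delta_{n,0}$, since $\pi^{4n}\equiv0$ for $n\ge1$ while the value at $n=0$ is $1$. In the coefficient of $t^1$ the first sum retains only its $n_0=0$ term, producing $(2(-4)^{-n}-4)\zeta(4n+1)$, whereas the second sum vanishes entirely: its summation range forces $n_0$ to be odd, so the only candidate $n_0=0$ that would survive modulo $\pi^2$ is excluded, and every remaining term carries a factor $\pi^{2n_0}\equiv0$. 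Thus the whole content of the conjecture lies in the coefficient of $t^2$, and the plan is to push the computation behind Theorem~\ref{main0} one further order in $t$.

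For this I would work with the $t$-shifted interpretation of the inner generating function. Writing $\mathcal{Z}(\boldsymbol{j};t)=\sum_{m\ge0}\zeta_m(\boldsymbol{j})t^m$, the geometric expansion $\sum_{l\ge0}\binom{j+l-1}{l}t^l/n^{j+l}=(n-t)^{-j}$ identifies
\[
 \mathcal{Z}(j_1,\dots,j_s;t)=\sum_{0<n_1<\dots<n_s}\prod_{i=1}^s\frac{1}{(n_i-t)^{j_i}}
\]
(with shuffle regularization when $\boldsymbol{j}$ is non-admissible), so that $\zeta_{\widehat{\mathcal{S}}}(\{1,3\}^n)$ becomes a signed sum, over all splittings of $(1,3,\dots,1,3)$, of an ordinary prefix MZV times a $t$-shifted suffix value. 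I would assemble these into the generating series $\sum_{n\ge0}\zeta_{\widehat{\mathcal{S}}}(\{1,3\}^n)x^{4n}$, evaluate it in closed form by deforming the Borwein--Bradley--Broadhurst--Lison\v{e}k evaluation $\sum_n\zeta(\{3,1\}^n)x^{4n}=(\cosh\pi x-\cos\pi x)/(\pi^2x^2)$, and expand the result to order $t^2$. Expanding each factor $(n_i-t)^{-j_i}$ introduces shifted harmonic sums whose coefficients are governed by polygamma values, and it is precisely here that the odd Riemann zeta values $\zeta(4n_1+1)$ and $\zeta(4n_2+3)$ appearing in the conjecture are produced.

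The reduction modulo $\pi^2$ should then simplify the outcome dramatically: every prefix MZV of even $\{1,3\}$-type is a rational multiple of a positive power of $\pi^2$ (for instance $\zeta(1,3)=\tfrac14\zeta(4)$) and hence vanishes, so only a restricted set of splittings survives, and the remaining prefix and suffix pieces collapse to polynomials in the odd Riemann zeta values. The main obstacle is that the unreduced coefficient of $t^2$ genuinely involves irreducible MZVs --- already at $n=2$ the value $\zeta_{\mathcal{S}_3}(1,3,1,3)$ contains $\zeta(3,5)$ --- so the conjecture amounts to the assertion that in $\zeta_{\widehat{\mathcal{S}}}(\{1,3\}^n)$ every such irreducible contribution appears premultiplied by $\zeta(2)$, dying modulo $\pi^2$ and leaving exactly the stated combination of products of two odd zeta values. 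Establishing this structural separation --- equivalently, controlling the mod-$\pi^2$ image of the degree-two part of each $t$-shifted suffix value uniformly in $n$ --- is the step I expect to be genuinely hard, and the absence of such a uniform evaluation (available at present only numerically) is what keeps the statement at the level of a conjecture.
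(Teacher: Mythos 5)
You have not proved the statement, and neither does the paper: the statement you were given is Conjecture~\ref{conj:zeta_S3(1,3,1,3)}, which the authors support only by numerical experiments, so there is no proof in the paper to compare against. Your handling of the easy part is correct and is exactly what the paper's own results give: the coefficients of $t^0$ and $t^1$ in $\zeta_{\mathcal{S}_3}(\{1,3\}^n)$ coincide with those in $\zeta_{\mathcal{S}_2}(\{1,3\}^n)$, and reducing Theorem~\ref{main0} modulo $\pi^2$ (only the $n_0=0$ term of the first sum survives, while the second sum dies because its range forces $n_0$ to be odd, hence $n_0\ge1$) yields $\delta_{n,0}$ and $(2(-4)^{-n}-4)\zeta(4n+1)$, as you say.

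The gap is the coefficient of $t^2$, and your sketch does not close it; you acknowledge this yourself, so your proposal is a research plan rather than a proof. The obstruction can be named precisely in the paper's own framework. Splitting the index as in the definition of $\zeta_{\widehat{\mathcal{S}}}$, one finds
\[
 \zeta_{\widehat{\mathcal{S}}}(\{1,3\}^n)=\sum_{j=0}^{n}\zeta(\{1,3\}^j)\sum_{m\ge0}\zeta_m(\{3,1\}^{n-j})t^m-\sum_{j=0}^{n-1}\zeta(\{1,3\}^j,1)\sum_{m\ge0}\zeta_m(\{3,1\}^{n-j-1},3)t^m,
\]
so the $t^2$ coefficient requires the two families $\zeta_2(\{3,1\}^m)$ and $\zeta_2(\{3,1\}^m,3)$. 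These are exactly the families that the paper's generating-series machinery (Lemmas~\ref{lem:zeta_1(2^n)_gen}, \ref{lem:Z(xyxy)_gen}, \ref{lem:zeta_2(1,3,1,3)_gen}, \ref{lem:zeta_2(1,3,1)_gen}) does not produce, and they cannot lie in the algebra generated by $F_{\pm},G_{\pm}$, since already $\zeta_{\mathcal{S}_3}(1,3,1,3)$ contains $\zeta(2)\zeta(3,5)$. For the $t^1$ coefficient the paper circumvents the analogous problem by the reversal symmetry $I_1(\boldsymbol{k})+I_1(\overline{\boldsymbol{k}})=\sigma(I_0(\boldsymbol{k}))$ (Lemma~\ref{lem:I_1_reversal_sum}), which expresses the sum of the $(\{1,3\}^n)$- and $(\{3,1\}^n)$-values in closed form and lets Theorem~\ref{main1} finish the job; no analogue of this identity at order $t^2$ is available, and your proposal supplies none. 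Your hope that every irreducible MZV contribution enters premultiplied by $\zeta(2)$ is precisely the open structural claim, and deforming the Borwein--Bradley--Broadhurst--Lison\v{e}k evaluation and expanding in $t$ gives no mechanism for establishing it uniformly in $n$. So your diagnosis of where the difficulty sits is accurate, but the statement remains exactly as conjectural after your argument as before it. (A minor notational slip: in this paper's convention it is $\zeta(\{1,3\}^n)$, not $\zeta(\{3,1\}^n)$, that equals $2\pi^{4n}/(4n+2)!$ and has the generating function $(\cosh\pi x-\cos\pi x)/(\pi^2x^2)$; the index $(\{3,1\}^n)$ ends in $1$ and requires regularization.)
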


Since the coefficient of $t^3$ in $\zeta_{\mathcal{S}_4}(1,3,1,3)$ is congruent to
\[
 -\frac{845}{4}\zeta(11)-\frac{9}{4}\zeta(3)^2\zeta(5)-\zeta(3)\zeta(3,5)+2\zeta(3,3,5)
\]
modulo $\pi^2$,
it is reasonable to believe that $\zeta_{\mathcal{S}_4}(1,3,1,3)$ cannot be written as a polynomial of the Riemann zeta values even when reduced modulo $\pi^2$, and so we do not investigate $\zeta_{\mathcal{S}_m}(\{1,3\}^n)$ for $m\ge4$ in this paper.

\subsubsection{Case of $\boldsymbol{k}=(\{3,1\}^n)$}
Consider the case where $\boldsymbol{k}$ starts with $3$ and ends with $1$, i.e., $\boldsymbol{k}$ is of the form $(\{3,1\}^n)$.
Then we can compute $\zeta_{\mathcal{S}_3}(\boldsymbol{k})$ explicitly as follows:
\begin{thm} \label{main1}
 We have
 \begin{align*}
  \zeta_{\mathcal{S}_3}(\{3,1\}^n)
  &=\frac{2(-4)^n}{(4n+2)!}\pi^{4n}+(-1)^{n+1}\sum_{\substack{n_0,n_1\ge0\\n_0+n_1=2n}}\frac{(-1)^{n_0}2^{n_0-n_1+2}}{(2n_0+2)!}\pi^{2n_0}\zeta(2n_1+1)t\\
  &\hphantom{{}={}}+(-1)^n\sum_{\substack{n_0,n_1,n_2\ge0\\n_0+n_1+n_2=2n}}\frac{(-1)^{n_0}2^{n_0-n_1-n_2+2}}{(2n_0+2)!}\pi^{2n_0}\zeta(2n_1+1)\zeta(2n_2+1)t^2
 \end{align*}
 for every nonnegative integer $n$.
\end{thm}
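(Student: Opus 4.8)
The plan is to encode all the values $\zeta_{\widehat{\mathcal{S}}}(\{3,1\}^n)$ into a single generating series in an auxiliary variable $x$ that tracks the weight, keep $t$ formal, and then extract the coefficient of $x^{4n}$ modulo $t^3$. First I would split the defining sum for $\zeta_{\widehat{\mathcal{S}}}(\{3,1\}^n)$ according to the parity of the cut point $i$. Since $\{3,1\}^n$ has length $2n$, cutting at $i=2a$ produces the left index $\{3,1\}^a$ and the reversed right index $\{1,3\}^{n-a}$ with sign $+1$, while cutting at $i=2a+1$ produces the left index $(\{3,1\}^a,3)$ and the reversed right index $(\{1,3\}^{n-a-1},1)$ with sign $-1$. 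Setting $Z(\boldsymbol{j};t)=\sum_{m\ge0}\zeta_m(\boldsymbol{j})t^m$ and using $\sum_{l\ge0}\binom{j+l-1}{l}t^l/n^{j+l}=(n-t)^{-j}$, the series $Z(\boldsymbol{j};t)$ is the regularized shifted multiple zeta value $\sum_{n_1<\dots<n_s}\prod_i(n_i-t)^{-j_i}$. The two families of cuts then make $\Phi(x,t)=\sum_{n\ge0}\zeta_{\widehat{\mathcal{S}}}(\{3,1\}^n)x^{4n}$ factor as a difference of two products of generating series, $\Phi=P(x)Q(x,t)-R(x)S(x,t)$, where $P$ and $R$ collect the regularized values $\zeta(\{3,1\}^a)$ and $\zeta(\{3,1\}^a,3)$, and $Q$ and $S$ collect the shifted values $Z(\{1,3\}^b;t)$ and $Z((\{1,3\}^b,1);t)$; the exponents of $x$ are chosen (respectively $4a$ and $4a+3$ for $P,R$, and $4b$ and $4b+1$ for $Q,S$) so that every surviving monomial has the form $x^{4n}$.

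Next I would put the four series in closed form. Each is a regularized, and for $Q,S$ also shifted, generating function of alternating $\{1,3\}$-strings, and I expect each to admit a closed form built from $\cos(\sqrt2\pi x)$, $\cosh(\sqrt2\pi x)$, $\sin(\sqrt2\pi x)$, and $\sinh(\sqrt2\pi x)$. The prototype is the Borwein--Bradley--Broadhurst--Lison\v{e}k product $\prod_{k\ge1}(1+x^4/k^4)=(\cosh(\sqrt2\pi x)-\cos(\sqrt2\pi x))/(2\pi^2x^2)$: the series $P$ and $R$ are its regularized analogues for the reversed, non-admissible strings $\{3,1\}^a$ and for $(\{3,1\}^a,3)$, while $Q$ and $S$ are governed by the same products after the shift $k\mapsto k-t$. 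Since the answer is needed only modulo $t^3$, it suffices to expand $Q$ and $S$ to second order in $t$: the operators $\partial_t$ and $\tfrac12\partial_t^2$ at $t=0$ turn $Z(\boldsymbol{j};t)$ into sums of once- and twice-incremented regularized strings, and summing these against the product identity collapses them to a single odd zeta value $\zeta(2n_1+1)$ at order $t$ and to a product $\zeta(2n_1+1)\zeta(2n_2+1)$ at order $t^2$, exactly the shape demanded by the theorem.

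Finally I would multiply the closed forms, reduce modulo $t^3$, and read off the coefficient of $x^{4n}$. The Taylor coefficients of $\cosh(\sqrt2\pi x)$ and $\cos(\sqrt2\pi x)$ are $2^k\pi^{2k}/(2k)!$ and $(-1)^k2^k\pi^{2k}/(2k)!$; after the division by $x^2$ coming from the product identity these produce the factors $(-1)^{n_0}2^{n_0-\dots+2}/(2n_0+2)!$ and the powers $\pi^{2n_0}$ of the statement, while the Cauchy product over $x$-degrees reproduces the constraints $n_0+n_1=2n$ at order $t$ and $n_0+n_1+n_2=2n$ at order $t^2$. As a consistency check, the constant term in $t$ reproduces $2(-4)^n\pi^{4n}/(4n+2)!$, which coincides with the $t^0$ coefficient already obtained in Theorem~\ref{main0} for the reversed index, as expected from the reversal symmetry of $\zeta_{\widehat{\mathcal{S}}}$. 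Throughout, Proposition~\ref{astsh} lets me identify $\zeta^*$ with $\zeta^{\sh}$, so the choice of regularization is immaterial.

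The main obstacle is the second step: establishing the closed forms of the regularized series $P,R$ and, above all, of the shifted series $Q,S$, while correctly accounting for the non-admissible strings ending in $1$ (namely $\{3,1\}^a$ and $(\{1,3\}^b,1)$), whose regularization contributes boundary terms that must be tracked. The delicate part is the order-$t^2$ expansion of $Q$ and $S$: one must verify that the once- and twice-incremented string sums collapse to single and double odd zeta values carrying precisely the rational coefficients, the powers of $2$, and the signs recorded in the formula. Once these closed forms and their $t$-expansions are secured, the remaining coefficient extraction is purely mechanical.
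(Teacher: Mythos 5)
Your skeleton coincides with the paper's: the parity split of the cut-point sum (even cuts giving $\zeta(\{3,1\}^a)\sum_{m}\zeta_m(\{1,3\}^{n-a})t^m$ with sign $+$, odd cuts giving $\zeta(\{3,1\}^{a},3)\sum_m\zeta_m(\{1,3\}^{n-a-1},1)t^m$ with sign $-$), the generating series in an auxiliary variable whose Cauchy product produces the constraints $n_0+n_1=2n$ and $n_0+n_1+n_2=2n$, and the final coefficient extraction are exactly what the paper does. The genuine gap is your second step, which is where essentially all the content of the paper's proof lives: you need closed forms, through order $t^2$, for the series $Q$ and $S$ collecting $\zeta_m(\{1,3\}^b)$ and $\zeta_m(\{1,3\}^b,1)$, and your proposed mechanism --- ``the same products after the shift $k\mapsto k-t$'' --- is not just unproved but false. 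The product $\prod_{k\ge1}(1+x^4/k^4)$ generates $\zeta(\{4\}^n)$, and the identity $\zeta(\{1,3\}^n)=4^{-n}\zeta(\{4\}^n)$ is a theorem whose proof does not commute with the Hurwitz shift $n_i\mapsto n_i-t$. Concretely, at order $t$ with $b=1$ one has
\[
 \zeta_1(1,3)=\zeta(2,3)+3\zeta(1,4)=\tfrac12\zeta(5),
 \qquad\text{while}\qquad
 4^{-1}\zeta_1(4)=\zeta(5),
\]
so $\sum_m\zeta_m(\{1,3\}^b)t^m\neq 4^{-b}\sum_m\zeta_m(\{4\}^b)t^m$ already in the first nontrivial case; no shifted version of the BBBL product can therefore yield $Q$, and the claimed ``collapse'' to single and double odd zeta values at orders $t$ and $t^2$ is precisely what must be proved.

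What fills this hole in the paper is a chain of algebraic, not analytic, ingredients: Propositions~\ref{zmZ} and \ref{duality} convert the shifted values into regularized values $Z^{\sh}$ of explicit words (e.g.\ $\zeta_1(\{1,3\}^n)=-Z^{\sh}(x(y^2x^2)^n)=-\zeta^{\sh}(\{1,3\}^n,1)$ by duality --- note this is how the $\tfrac12\zeta(5)$ above really arises); the combinatorial shuffle identities of Lemmas~\ref{wordA}, \ref{word1}, \ref{wordB}, and \ref{word2} turn the needed alternating sums into words for $\zeta_1(\{1,3\}^n,1)$, $\zeta(\{3,1\}^n,2)$, and $Z^{\sh}((xy)^n)$; and the harmonic-product evaluation of $\zeta_1(\{2\}^n)$ (Lemma~\ref{lem:zeta_1(2^n)}) together with the Bowman--Bradley and Bachmann--Charlton evaluations of $\zeta(\{3,1\}^n)$, $\zeta(\{3,1\}^n,3)$, $\zeta(\{3,1\}^n,2)$, and $\zeta(\{1,3\}^n,1)$ then give the closed forms of Lemmas~\ref{lem:zeta_1(1,3,1,3)_gen}, \ref{lem:zeta_1(1,3,1)_gen}, \ref{lem:zeta_2(1,3,1,3)_gen}, and \ref{lem:zeta_2(1,3,1)_gen} in terms of $F_{\pm}$, $G_{\pm}$. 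Your plan also leaves the regularization of the non-admissible strings $\{3,1\}^a$ and $(\{1,3\}^b,1)$ as unspecified ``boundary terms''; the paper handles this precisely by working with $Z^{\sh}=Z\circ\reg_{\sh}$ and Proposition~\ref{regshwd}, under which no extra boundary contributions appear. Until you supply a proof of the closed forms for $Q$ and $S$ through order $t^2$ by some route, the argument does not go through.
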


\begin{cor} \label{cor1}
 We have
 \begin{align*}
  &\zeta_{\mathcal{S}_3}(\{3,1\}^n)\\
  &\equiv\delta_{n,0}-2(-4)^{-n}\zeta(4n+1)t
  +2(-4)^{-n}\sum_{\substack{n_1,n_2\ge0\\n_1+n_2=2n}}\zeta(2n_1+1)\zeta(2n_2+1)t^2\pmod{\pi^2}
 \end{align*}
 for every nonnegative integer $n$.
\end{cor}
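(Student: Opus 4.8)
The plan is to deduce Corollary~\ref{cor1} directly from the explicit $t$-adic formula in Theorem~\ref{main1} by reducing modulo $\pi^2$. The guiding observation is that reduction modulo $\pi^2$ kills every monomial that carries a positive power of $\pi^2$. In each of the three sums of Theorem~\ref{main1} the only power of $\pi$ is $\pi^{2n_0}$ (and $\pi^{4n}$ in the constant term), so after reduction only the contributions with $n_0=0$ can survive. First I would isolate these $n_0=0$ parts; the remaining work is then the elementary simplification of their rational coefficients.

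For the constant term $\frac{2(-4)^n}{(4n+2)!}\pi^{4n}$, the factor $\pi^{4n}$ is divisible by $\pi^2$ as soon as $n\ge1$, so this term vanishes modulo $\pi^2$ unless $n=0$, where it equals $\frac{2}{2!}=1$; this produces the Kronecker delta $\delta_{n,0}$. For the coefficient of $t$, setting $n_0=0$ forces $n_1=2n$ and leaves the single term $(-1)^{n+1}2^{1-2n}\zeta(4n+1)$, which I would rewrite as $-2(-4)^{-n}\zeta(4n+1)$ using $(-1)^n4^{-n}=(-4)^{-n}$. For the coefficient of $t^2$, setting $n_0=0$ leaves the sum over $n_1+n_2=2n$ in which every summand carries the same constant $(-1)^n2^{1-2n}=2(-4)^{-n}$, giving $2(-4)^{-n}\sum_{\substack{n_1,n_2\ge0\\n_1+n_2=2n}}\zeta(2n_1+1)\zeta(2n_2+1)$.

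Because Theorem~\ref{main1} already furnishes the full $t$-adic value, there is no genuine analytic difficulty here; the entire content is discarding the $\pi^2$-divisible terms and performing the bookkeeping of the powers of $2$ and of the signs. The only place demanding a little care is keeping the identities $(-1)^n4^{-n}=(-4)^{-n}$ and $2^{1-2n}=2\cdot4^{-n}$ consistent across both the $t$- and the $t^2$-coefficient, so that the factors $-2(-4)^{-n}$ and $2(-4)^{-n}$ come out exactly as stated.
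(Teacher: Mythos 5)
Your proposal is correct and is exactly the argument the paper intends: Corollary~\ref{cor1} is stated as an immediate consequence of Theorem~\ref{main1}, obtained by discarding all terms with $n_0\ge1$ (and the constant term for $n\ge1$) as multiples of $\pi^2$ and simplifying $(-1)^{n+1}2^{1-2n}=-2(-4)^{-n}$ and $(-1)^{n}2^{1-2n}=2(-4)^{-n}$. Your bookkeeping of the $n_0=0$ terms in all three coefficients checks out, so there is nothing to add.
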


Since the coefficient of $t^3$ in $\zeta_{\mathcal{S}_4}(3,1,3,1)$ is congruent to
\[
 \frac{605}{4}\zeta(11)+\frac{19}{4}\zeta(3)^2\zeta(5)+2\zeta(3)\zeta(3,5)-2\zeta(3,3,5)
\]
modulo $\pi^2$,
it is reasonable to believe that $\zeta_{\mathcal{S}_4}(3,1,3,1)$ cannot be written as a polynomial of the Riemann zeta values even when reduced modulo $\pi^2$, and so we do not investigate $\zeta_{\mathcal{S}_m}(\{3,1\}^n)$ for $m\ge4$ in this paper.

\subsubsection{Case of $\boldsymbol{k}=(\{1,3\}^n,1)$}
Consider the case where $\boldsymbol{k}$ both starts and ends with $1$, i.e., $\boldsymbol{k}$ is of the form $(\{1,3\}^n,1)$.
Then we can compute $\zeta_{\mathcal{S}_3}(\boldsymbol{k})$ explicitly as follows:
\begin{thm} \label{main2}
 We have
 \[
  \zeta_{\mathcal{S}_3}(\{1,3\}^n,1)
  =\frac{(-4)^{n+1}}{(4n+4)!}\pi^{4n+2}t
   +(-1)^n\sum_{\substack{n_0,n_1\ge0\\n_0+n_1=2n+1}}\frac{(-1)^{n_1}2^{n_0-n_1+2}}{(2n_0+2)!}\pi^{2n_0}\zeta(2n_1+1)t^2
 \]
 for every nonnegative integer $n$.
\end{thm}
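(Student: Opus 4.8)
The plan is to read off the coefficients of $t^0,t^1,t^2$ of $\zeta_{\widehat{\mathcal S}}(\{1,3\}^n,1)$ directly from the definition, exploiting the palindromic shape of the index to turn the defining sum into a self-convolution of prefix data, and then to evaluate everything through generating functions built from the classical evaluations of alternating $\{1,3\}$ MZVs. Throughout I work with the shuffle regularization, which is legitimate by Proposition~\ref{astsh}. Write $\boldsymbol{k}=(k_1,\dots,k_{2n+1})$ with $k_j=1$ for odd $j$ and $k_j=3$ for even $j$; this index is a palindrome. Since $(k_{2n+1},\dots,k_{i+1})$ then coincides with the length-$(2n+1-i)$ prefix and $k_{i+1}+\dots+k_{2n+1}\equiv 2n+1-i\pmod 2$, putting $a=i$, $b=2n+1-i$, and $P_\ell=(k_1,\dots,k_\ell)$ gives
\[
 \zeta_{\widehat{\mathcal S}}(\{1,3\}^n,1)=\sum_{a+b=2n+1}(-1)^b\,\zeta^{\sh}(P_a)\sum_{m\ge0}\zeta_m^{\sh}(P_b)\,t^m.
\]
The $t^0$ part is the ordinary symmetric multiple zeta value $\zeta_{\mathcal S}(\{1,3\}^n,1)$, and I claim it vanishes: directly from the definition, commutativity of the product of the two scalar zeta values yields $\zeta_{\mathcal S}(\overline{\boldsymbol k})=(-1)^{\mathrm{wt}(\boldsymbol k)}\zeta_{\mathcal S}(\boldsymbol k)$ for the reversed index $\overline{\boldsymbol k}$, and here $\boldsymbol k$ is a palindrome of odd weight $4n+1$, forcing $\zeta_{\mathcal S}(\boldsymbol k)=0$. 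This accounts for the absence of a constant term in the statement.

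Next I would package the remaining two coefficients as coefficient extractions. Setting $A(X)=\sum_{a\ge0}\zeta^{\sh}(P_a)X^a$ and $B_m(X)=\sum_{b\ge0}(-1)^b\zeta_m^{\sh}(P_b)X^b$, the displayed formula gives
\[
 [t^1]\,\zeta_{\widehat{\mathcal S}}(\{1,3\}^n,1)=[X^{2n+1}]\,A(X)B_1(X),\qquad [t^2]\,\zeta_{\widehat{\mathcal S}}(\{1,3\}^n,1)=[X^{2n+1}]\,A(X)B_2(X),
\]
so the task reduces to finding closed forms for $A$, $B_1$, and $B_2$.

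The series $A$ splits by the parity of $a$: its even part is governed by $\zeta(\{1,3\}^c)=2\pi^{4c}/(4c+2)!$ (see \cite{BBBL98}), which sums to a $\cosh\pi X-\cos\pi X$ type expression, while its odd part is controlled by the shuffle-regularized values $\zeta^{\sh}(\{1,3\}^c,1)$, for which I would first derive a closed form via the shuffle regularization of a trailing $1$. The deformations $\zeta_1^{\sh}(P_b)$ and $\zeta_2^{\sh}(P_b)$ raise one or two of the entries of $P_b$; although the individual MZVs so produced need not be reducible to $\pi$-powers and odd Riemann zeta values, I expect the full generating series $B_1(X)$ and $B_2(X)$ to possess closed forms within the BBBL circle of ideas, namely elementary trigonometric series together with $\sum_j\zeta(2j+1)X^{2j+1}$.

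Finally I would form $A(X)B_m(X)$, extract the coefficient of $X^{2n+1}$ in each $t$-degree, and match the outcome to the claimed $\tfrac{(-4)^{n+1}}{(4n+4)!}\pi^{4n+2}$ and to the sum over $n_0+n_1=2n+1$. The computation should run closely parallel to those behind Theorems~\ref{main0} and~\ref{main1}, whose machinery—and, where convenient, whose output for $\zeta_{\widehat{\mathcal S}}(\{3,1\}^n)$—I would reuse. The main obstacle is precisely the evaluation of the shift deformations $\zeta_m^{\sh}(P_b)$ for $m=1,2$, which genuinely leave the alternating $\{1,3\}$ class: one must evaluate the resulting more general, and partly non-admissible, MZVs and establish the exact generating-function identities they satisfy. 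The decisive point is the cancellation this produces: that $[X^{2n+1}]A\,B_1$ loses all odd zeta values and collapses to a single $\pi$-power, while $[X^{2n+1}]A\,B_2$ retains exactly the stated products $\pi^{2n_0}\zeta(2n_1+1)$.
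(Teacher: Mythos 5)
Your reduction is the same one the paper uses: exploiting the palindromic shape of $(\{1,3\}^n,1)$, the defining sum becomes a convolution of prefix data, and your identity $[t^m]\,\zeta_{\widehat{\mathcal S}}(\{1,3\}^n,1)=[X^{2n+1}]A(X)B_m(X)$ is exactly the first display in the paper's proof of Theorem~\ref{main2}, up to bookkeeping (the paper separates the two parities of the prefix length and uses a variable $u$ tracking $n$). Your sign $(-1)^b$ is correct, since the weight of the length-$b$ prefix is congruent to $b$ modulo $2$. Your disposal of the $t^0$ term via the reversal antisymmetry $\zeta_{\mathcal S}(\overline{\boldsymbol k})=(-1)^{\mathrm{wt}(\boldsymbol k)}\zeta_{\mathcal S}(\boldsymbol k)$, applied to a palindrome of odd weight $4n+1$, is correct and in fact tidier than the paper's treatment, which obtains that vanishing as a cancellation between generating series.

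However, what you label ``the main obstacle'' is not a residual detail: it is the actual content of the proof, and your proposal offers no method for overcoming it. The closed forms you need for $B_1$ and $B_2$ are precisely the paper's Lemmas~\ref{lem:zeta_1(1,3,1,3)_gen}, \ref{lem:zeta_1(1,3,1)_gen}, \ref{lem:zeta_2(1,3,1,3)_gen}, and~\ref{lem:zeta_2(1,3,1)_gen}, namely (in the paper's notation)
\begin{align*}
 \sum_{n\ge0}(-1)^n\zeta_1(\{1,3\}^n)u^{4n+1}&=-(F_++F_-)G_+G_-,\\
 \sum_{n\ge0}(-1)^n\zeta_1(\{1,3\}^n,1)u^{4n+2}&=\frac{G_+^2-G_-^2}{2G_+G_-}-(F_++F_-)^2G_+G_-,\\
 \sum_{n\ge0}(-1)^n\zeta_2(\{1,3\}^n)u^{4n+2}&=2F_+F_-G_+G_-,\\
 \sum_{n\ge0}(-1)^n\zeta_2(\{1,3\}^n,1)u^{4n+3}&=\frac{F_+G_-^2-F_-G_+^2}{G_+G_-}+2F_+F_-(F_++F_-)G_+G_-,
\end{align*}
and these do not follow from the ``BBBL circle of ideas'' alone, precisely because, as you observe, the deformed values $\zeta_m^{\sh}(P_b)$ leave the alternating $\{1,3\}$ class and are not individually reducible. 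The paper obtains them by a different mechanism: regularized duality (Proposition~\ref{duality}) converts $\zeta_1(\{1,3\}^n)$ into $-\zeta(\{1,3\}^n,1)$; the word-level alternating shuffle identities (Lemmas~\ref{wordA}, \ref{word1}, \ref{wordB}, \ref{word2}) express the alternating sums underlying $\zeta_1(\{1,3\}^n,1)$, $\zeta_2(\{1,3\}^n)$, and $\zeta_2(\{1,3\}^n,1)$ through products of $\zeta_1(\{2\}^k)$ and $Z^{\sh}((xy)^k)$, which are then evaluated by Lemmas~\ref{lem:zeta_1(2^n)_gen} and~\ref{lem:Z(xyxy)_gen}; and the Bowman--Bradley evaluation of $\zeta(\{3,1\}^n,2)$ (Lemma~\ref{lem:zeta(3,1,2)_gen}) is an essential input to Lemma~\ref{lem:zeta_1(1,3,1)_gen}. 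Without this machinery, or some equivalent, your plan cannot produce the cancellations you predict; as it stands, the proposal is a correct setup together with an accurate anticipation of the answer, not a proof.
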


\begin{cor} \label{cor2}
 We have
 \[
  \zeta_{\mathcal{S}_3}(\{1,3\}^n,1)\equiv-(-4)^{-n}\zeta(4n+3)t^2\pmod{\pi^2}
 \]
 for every nonnegative integer $n$.
\end{cor}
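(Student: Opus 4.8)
The plan is simply to reduce the exact formula of Theorem~\ref{main2} modulo $\pi^2$, so the entire argument amounts to a routine extraction of the terms that survive. Since we are working in $\mathcal{Z}[[t]]/t^3$, there are only two coefficients to examine: the coefficient of $t$ and the coefficient of $t^2$.

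First I would dispose of the coefficient of $t$. In Theorem~\ref{main2} this coefficient equals $\frac{(-4)^{n+1}}{(4n+4)!}\pi^{4n+2}$, and since $4n+2\ge 2$ for every nonnegative integer $n$, it is divisible by $\pi^2$ and hence vanishes modulo $\pi^2$. Thus the $t$-term disappears entirely, which already explains why the right-hand side of the corollary has no linear term.

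Next I would analyze the coefficient of $t^2$, namely $(-1)^n\sum_{\substack{n_0,n_1\ge0\\n_0+n_1=2n+1}}\frac{(-1)^{n_1}2^{n_0-n_1+2}}{(2n_0+2)!}\pi^{2n_0}\zeta(2n_1+1)$. Here each summand carries a factor $\pi^{2n_0}$, so modulo $\pi^2$ only the term with $n_0=0$ survives, which forces $n_1=2n+1$. Substituting $n_0=0$ and $n_1=2n+1$ leaves the single contribution $(-1)^n\cdot\frac{(-1)^{2n+1}\,2^{1-2n}}{2!}\,\zeta(4n+3)$, and I would then simplify this using $(-1)^{2n+1}=-1$, $2^{1-2n}/2!=2^{-2n}=4^{-n}$, and the identity $(-1)^n4^{-n}=(-4)^{-n}$ to obtain exactly $-(-4)^{-n}\zeta(4n+3)$.

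There is essentially no obstacle here beyond careful bookkeeping; the only points requiring a moment's attention are confirming that no summand with $n_0\ge 1$ can contribute modulo $\pi^2$ and that the numerical coefficient of the isolated $n_0=0$ term collapses correctly to $-(-4)^{-n}$. Both follow immediately once that term is singled out. Assembling the two observations yields $\zeta_{\mathcal{S}_3}(\{1,3\}^n,1)\equiv-(-4)^{-n}\zeta(4n+3)t^2\pmod{\pi^2}$, as claimed.
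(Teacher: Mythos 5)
Your proposal is correct and coincides with the paper's (implicit) derivation: Corollary~\ref{cor2} is obtained exactly by reducing the exact formula of Theorem~\ref{main2} modulo $\pi^2$, discarding the $t$-coefficient because of the factor $\pi^{4n+2}$ and keeping only the $n_0=0$ term of the $t^2$-coefficient, which simplifies to $-(-4)^{-n}\zeta(4n+3)$ just as you computed.
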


Since the coefficient of $t^3$ in $\zeta_{\mathcal{S}_4}(1,3,1)$ is congruent to
\[
 \frac{9}{2}\zeta(3)\zeta(5)+\zeta(3,5)
\]
modulo $\pi^2$,
it is reasonable to believe that $\zeta_{\mathcal{S}_4}(1,3,1)$ cannot be written as a polynomial of the Riemann zeta values even when reduced modulo $\pi^2$, and so we do not investigate $\zeta_{\mathcal{S}_m}(\{1,3\}^n,1)$ for $m\ge4$ in this paper.

\subsubsection{Summary}
\begin{conj}
 The pairs $(\boldsymbol{k},m)$ of an index $\boldsymbol{k}$ in which $1$ and $3$ appear alternately and a positive integer $m$
 such that $\zeta_{\mathcal{S}_m}(\boldsymbol{k})$ can be written as a polynomial of the Riemann zeta values
 are exhausted by those deduced from Theorems~\ref{main0}, \ref{main1}, and \ref{main2} and the following equations:
 \begin{gather*}
  \zeta_{\mathcal{S}_1}(\{3,1\}^n,3)=0\qquad(n\ge0),\\
  \zeta_{\widehat{\mathcal{S}}}(1)=-\sum_{m\ge1}\zeta(m+1)t^m,\qquad
  \zeta_{\widehat{\mathcal{S}}}(3)=-\frac{1}{2}\sum_{m\ge1}(m+1)(m+2)\zeta(m+3)t^m,\\
  \begin{aligned}
   \zeta_{\mathcal{S}_3}(1,3)&=-\frac{\pi^4}{90}+\biggl(\frac{\pi^2}{6}\zeta(3)-\frac{9}{2}\zeta(5)\biggr)t+\biggl(-\frac{19\pi^6}{3780}+\frac{1}{2}\zeta(3)^2\biggr)t^2+\biggl(\frac{\pi^4}{90}\zeta(3)+\pi^2\zeta(5)-17\zeta(7)\biggr)t^3,\\
   \zeta_{\mathcal{S}_3}(3,1)&=-\frac{\pi^4}{90}+\biggl(-\frac{\pi^2}{6}\zeta(3)+\frac{1}{2}\zeta(5)\biggr)t-\frac{1}{2}\zeta(3)^2t^2+\biggl(\frac{\pi^4}{45}\zeta(3)-3\zeta(7)\biggr)t^3.
  \end{aligned}
 \end{gather*}
\end{conj}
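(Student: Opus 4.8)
The plan is to split the statement into its two logically independent halves. The \emph{inclusion} half asserts that every pair $(\boldsymbol{k},m)$ produced by Theorems~\ref{main0}, \ref{main1}, \ref{main2} and by the five displayed equations really does give a value lying in the polynomial algebra generated by the Riemann zeta values; the \emph{exhaustiveness} half asserts that no other pair does. I would establish the inclusion half completely and then reduce the exhaustiveness half to a finite-looking list of explicit non-membership assertions, which is where the genuine difficulty lies.

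For the inclusion half the three theorems already do the bulk of the work: each of their right-hand sides is a polynomial in $\pi^2$ and in odd Riemann zeta values, and since $\pi^{2k}$ is a rational multiple of $\zeta(2k)$ by Euler's formula, these are polynomials in the Riemann zeta values. It then remains to treat the five displayed equations directly from the definition of $\zeta_{\widehat{\mathcal{S}}}$. The identities for $\zeta_{\widehat{\mathcal{S}}}(1)$ and $\zeta_{\widehat{\mathcal{S}}}(3)$ are the one-entry cases: only the summands $i=0$ and $i=1$ survive, the $i=1$ summand contributes the constant $\zeta(k)$ which cancels the $m=0$ part of the $i=0$ summand, and the remaining $-\sum_{m\ge1}\zeta_m(k)t^m$ is evaluated via $\zeta_m(k)=\binom{k+m-1}{m}\zeta(k+m)$. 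The equation $\zeta_{\mathcal{S}_1}(\{3,1\}^n,3)=0$ follows from the reversal relation for the constant term (the ordinary symmetric MZV): the index is a palindrome of odd weight $4n+3$, so the constant term equals $(-1)^{4n+3}$ times itself and hence vanishes. The two remaining expansions are the $n=1$ cases of $(3,1)$ and $(1,3)$; the former is read off from Theorem~\ref{main1}, and the latter, which retains one more order in $t$ than Theorem~\ref{main0} supplies, is obtained by the same generating-function computation that proves the theorems.

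For the exhaustiveness half I would proceed family by family over $(\{3,1\}^n,3)$, $(\{1,3\}^n)$, $(\{3,1\}^n)$, and $(\{1,3\}^n,1)$. For each family one fixes the smallest uncovered pair $(\boldsymbol{k},m)$ and must show that the corresponding coefficient of $\zeta_{\mathcal{S}_m}(\boldsymbol{k})$ is \emph{not} a polynomial in the Riemann zeta values; a simple monotonicity observation then propagates the failure to all larger $m$, since $\zeta_{\mathcal{S}_{m+1}}$ retains every coefficient of $\zeta_{\mathcal{S}_m}$. In this way the statement reduces to non-membership assertions for the explicit values displayed in the text, such as the coefficient of $t$ in $\zeta_{\mathcal{S}_2}(3,1,3)$, of $t^2$ in $\zeta_{\mathcal{S}_3}(1,3,1,3)$, and of $t^3$ in the $\zeta_{\mathcal{S}_4}$ examples, together with the analogous assertions for every larger $n$ in each family.

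The hard part is precisely this non-membership, in two senses. First, even for a single value such as one involving $\zeta(3,5)$ or $\zeta(3,3,5)$, proving that it is not a polynomial in Riemann zeta values amounts to a linear-independence statement among multiple zeta values that cannot be established unconditionally with current methods, although it would follow from the standard dimension conjectures for $\mathcal{Z}$. Second, one needs such independence uniformly along each infinite family as $n\to\infty$, which is strictly stronger. For these reasons I expect the exhaustiveness half to remain genuinely conjectural, supported only by the numerical evidence cited in the text; what I would actually write out is the inclusion half in full, together with the conditional reduction of exhaustiveness to these explicit independence statements.
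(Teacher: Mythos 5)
The statement you were asked to prove is a conjecture: the paper contains no proof of it, and the exhaustiveness assertion (``no other pair works'') is supported there only by numerical evidence, namely the quoted coefficients involving $\zeta(3,5)$ and $\zeta(3,3,5)$ in the four families. Your assessment therefore matches the paper's own position: the inclusion half is provable from Theorems~\ref{main0}, \ref{main1}, \ref{main2} plus finite direct computations, while the exhaustiveness half reduces to non-membership (linear independence) statements about MZVs that cannot be established unconditionally with current methods and are moreover needed for every $n$ in each family. Your supporting arguments are sound: the monotonicity remark (non-polynomiality at $m$ propagates to all larger $m$, since the first $m$ coefficients are retained), the one-letter computations of $\zeta_{\widehat{\mathcal{S}}}(1)$ and $\zeta_{\widehat{\mathcal{S}}}(3)$ via $\zeta_m(k)=\binom{k+m-1}{m}\zeta(k+m)$, and the reflection argument giving $\zeta_{\mathcal{S}_1}(\{3,1\}^n,3)=0$ (the index is a palindrome of odd weight $4n+3$, so the constant term equals $(-1)^{4n+3}$ times itself) are all correct; the last of these is more informative than the paper's terse ``by definition.''

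One concrete slip in your inclusion half. The two displayed expansions for $(1,3)$ and $(3,1)$ carry nonzero $t^3$ coefficients, so the subscript $\mathcal{S}_3$ there must be read as $\mathcal{S}_4$ (an element of $\mathcal{Z}[[t]]/t^3$ has no $t^3$ term). With that reading, \emph{both} expansions --- not only the one for $(1,3)$ --- go beyond the theorems: Theorem~\ref{main1} determines $\zeta_{\mathcal{S}_4}(3,1)$ only modulo $t^3$, so its $t^3$ coefficient $\frac{\pi^4}{45}\zeta(3)-3\zeta(7)$ requires a separate computation of exactly the kind you propose for $(1,3)$ (which itself exceeds Theorem~\ref{main0} by two orders of $t$, not one). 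These are finite, low-weight computations, so your overall plan is unaffected, but as written your claim that the $(3,1)$ expansion is ``read off from Theorem~\ref{main1}'' is incomplete.
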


\section{Proofs of our main theorems} \label{sec2}
\subsection{Algebraic setup}
We use Hoffman's algebraic setup with a slightly different convention (see \cite{Hof97}).
Set $\mathfrak{H}=\mathbb{Q}\langle x,y\rangle$, $\mathfrak{H}^1=\mathbb{Q}+y\mathfrak{H}$, and $\mathfrak{H}^0=\mathbb{Q}+y\mathfrak{H}x$.
We define the shuffle product as the $\mathbb{Q}$-bilinear product $\sh\colon\mathfrak{H}\times\mathfrak{H}\to\mathfrak{H}$ given by
\begin{align*}
 1\sh w&=w\sh 1=w, \\
 uw \sh u'w' &=u(w\sh u'w') +u'(uw\sh w'),
\end{align*}
where $w,w'\in\mathfrak{H}$ and $u,u'\in\{x,y\}$.
This product makes $\mathfrak{H}$ a commutative $\mathbb{Q}$-algebra, which we denote by $\mathfrak{H}_{\sh}$ (see \cite{Reu93}).
The subspaces $\mathfrak{H}^1$ and $\mathfrak{H}^0$ become subalgebras of $\mathfrak{H}_{\sh}$, which we denote by $\mathfrak{H}^1_{\sh}$ and $\mathfrak{H}^0_{\sh}$, respectively.

For a positive integer $k$, put $z_k=yx^{k-1}$.
We define the $\mathbb{Q}$-linear map $Z\colon \mathfrak{H}^0 \to \mathbb{R}$ by
\[
 Z (z_{k_1} \cdots z_{k_r})
 =\zeta (k_1,\dots,k_r), 
\]
where $(k_1,\dots,k_r)$ is an admissible index.
Note that $Z(w_1\sh w_2)=Z(w_1) Z(w_2)$ holds for $w_1,w_2\in \mathfrak{H}^0$.

We define the algebra homomorphism $\reg_{\sh} \colon \mathfrak{H}_{\sh}\to \mathfrak{H}^0_{\sh}$ by the properties that it is the identity on $\mathfrak{H}^0$, maps $x$ to $0$, and maps $y$ to $0$.
We also define $Z^{\sh}\colon \mathfrak{H}_{\sh}\to \mathbb{R}$ by
 \[
  Z^{\sh}= Z \circ \reg_{\sh}.
 \]

\begin{prop} \label{regshwd}
 For a nonnegative integer $m$ and positive integers $k_1,\dots,k_r$, we have
 \begin{align*}
  x^m yx^{k_1-1} \cdots yx^{k_r-1}
  \equiv (-1)^m \sum_{\substack{ l_1,\dots,l_{r}\ge0 \\ l_1+\cdots+l_{r}=m }}
   yx^{k_1+l_1-1} \cdots yx^{k_r+l_r-1}
   \prod_{j=1}^{r} \binom{k_j+l_j-1}{l_j}
 \end{align*}
 modulo $x \sh \mathfrak{H}$.
\end{prop}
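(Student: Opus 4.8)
The plan is to read the statement as a congruence in the shuffle algebra $\mathfrak{H}_{\sh}$ modulo the subspace $x\sh\mathfrak{H}$, and to prove it by induction on $m$, uniformly over all indices of a fixed length $r$. Write $w_{\boldsymbol{k}}=yx^{k_1-1}\cdots yx^{k_r-1}$ for the word attached to $\boldsymbol{k}=(k_1,\dots,k_r)$, and let $R_m(\boldsymbol{k})$ denote the right-hand side of the proposition. The base case $m=0$ is immediate, since only the term with all $l_j=0$ survives and every binomial coefficient equals $1$, so that $R_0(\boldsymbol{k})=w_{\boldsymbol{k}}$.

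For the inductive step I would exploit the single-letter shuffle formula $x\sh(s_1\cdots s_L)=\sum_{p=0}^{L}s_1\cdots s_p\,x\,s_{p+1}\cdots s_L$, i.e.\ shuffling in one extra $x$ amounts to inserting it in every slot. Applying this to $x^{m-1}w_{\boldsymbol{k}}$, the $m$ insertions at the first $m$ slots (within the leading run $x^{m-1}$ and at the junction just before $w_{\boldsymbol{k}}$) each reproduce $x^m w_{\boldsymbol{k}}$, while the remaining insertions act only on $w_{\boldsymbol{k}}$ and leave the prefix $x^{m-1}$ untouched; this gives $x\sh(x^{m-1}w_{\boldsymbol{k}})=m\,x^m w_{\boldsymbol{k}}+x^{m-1}\Delta(w_{\boldsymbol{k}})$, where $\Delta(w_{\boldsymbol{k}})$ denotes the sum over the insertions of one $x$ into the non-initial slots of $w_{\boldsymbol{k}}$. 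A slot count then shows $\Delta(w_{\boldsymbol{k}})=\sum_{i=1}^{r}k_i\,w_{\boldsymbol{k}+e_i}$, where $e_i$ increments the $i$-th entry, because each of the $k_i$ slots inside the block $yx^{k_i-1}$ contributes one copy of $w_{\boldsymbol{k}+e_i}$ once the boundary slots are assigned to the block on their left. Since $x\sh(x^{m-1}w_{\boldsymbol{k}})\equiv0\pmod{x\sh\mathfrak{H}}$, solving for the leading term yields the recursion $x^m w_{\boldsymbol{k}}\equiv-\frac{1}{m}\sum_{i=1}^{r}k_i\,x^{m-1}w_{\boldsymbol{k}+e_i}\pmod{x\sh\mathfrak{H}}$.

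It then remains to substitute the inductive hypothesis $x^{m-1}w_{\boldsymbol{k}+e_i}\equiv R_{m-1}(\boldsymbol{k}+e_i)$ and to verify the purely combinatorial identity $-\frac{1}{m}\sum_{i=1}^{r}k_i R_{m-1}(\boldsymbol{k}+e_i)=R_m(\boldsymbol{k})$. Comparing the coefficient of a fixed word $w_{\boldsymbol{k}+\boldsymbol{L}}$ with $\lvert\boldsymbol{L}\rvert=m$, this reduces to $\frac{1}{m}\sum_{i}k_i\binom{k_i+L_i-1}{L_i-1}\prod_{j\ne i}\binom{k_j+L_j-1}{L_j}=\prod_{j}\binom{k_j+L_j-1}{L_j}$, which follows at once from $k_i\binom{k_i+L_i-1}{L_i-1}=L_i\binom{k_i+L_i-1}{L_i}$ together with $\sum_i L_i=m$. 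This closes the induction.

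I expect the main obstacle to be bookkeeping rather than conceptual difficulty: pinning down the exact multiplicity $m$ of $x^m w_{\boldsymbol{k}}$ in the shuffle expansion (in particular handling the boundary slot between the prefix $x^{m-1}$ and $w_{\boldsymbol{k}}$), correctly identifying $\Delta(w_{\boldsymbol{k}})=\sum_i k_i w_{\boldsymbol{k}+e_i}$ by a consistent assignment of block-boundary slots, and steering the binomial manipulation so that the spurious factor $1/m$ is absorbed by $\sum_i L_i=m$. Once the recursion is established, the remainder is routine.
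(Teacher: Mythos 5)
Your proof is correct, but it takes a genuinely different route from the paper's. The paper first establishes the exact identity $x^{m}yw=\sum_{i=0}^{m}(-1)^{m-i}\,x^{i}\sh y(x^{m-i}\sh w)$ in $\mathfrak{H}$ (itself by induction), notes that every term with $i\ge1$ lies in $x\sh\mathfrak{H}$, so that $x^{m}yw\equiv(-1)^{m}y(x^{m}\sh w)$, and then expands $x^{m}\sh w$ for $w=x^{k_1-1}yx^{k_2-1}\cdots yx^{k_r-1}$ in one stroke: distributing the $m$ letters $x$ among the $r$ runs of $x$'s produces exactly the product of binomial coefficients. You never write down such a closed-form congruence; instead you shuffle a single $x$ against $x^{m-1}w_{\boldsymbol{k}}$, use that the result lies in $x\sh\mathfrak{H}$ to obtain the first-order recursion $m\,x^{m}w_{\boldsymbol{k}}\equiv-\sum_{i}k_i\,x^{m-1}w_{\boldsymbol{k}+e_i}$ modulo $x\sh\mathfrak{H}$, and close the induction on $m$ via $k_i\binom{k_i+L_i-1}{L_i-1}=L_i\binom{k_i+L_i-1}{L_i}$ together with $\sum_i L_i=m$; your slot bookkeeping (assigning each block boundary to the block on its left, giving $\Delta(w_{\boldsymbol{k}})=\sum_i k_i\,w_{\boldsymbol{k}+e_i}$) is accurate. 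Each approach has its merits: the paper's yields a stronger exact identity in $\mathfrak{H}$, needs no division, and makes the binomial coefficients appear directly from the shuffle count, whereas yours is more self-contained---it uses nothing beyond the one-letter shuffle expansion and the definition of the quotient---at the modest price of dividing by $m$ (harmless, since $\mathfrak{H}$ has $\mathbb{Q}$-coefficients) and a final coefficient-matching computation; you should also note the degenerate case $r=0$, where the claim reduces to $x\sh x^{m-1}=m\,x^{m}$, since your argument tacitly assumes the word contains a $y$.
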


\begin{proof}
 Note that induction shows
 \[
  x^{m}yw = \sum_{i=0}^m (-1)^{m-i} x^i \sh y(x^{m-i}\sh w)
 \]
 for all $w\in \mathfrak{H}$.
 Setting $w=x^{k_1-1} y x^{k_2-1}  \cdots yx^{k_r-1}$, we have
 \begin{align*}
  x^m yx^{k_1-1} \cdots yx^{k_r-1}
  &=x^m yw
  \equiv (-1)^m y(x^m \sh w) \\
  &=(-1)^m \sum_{\substack{ l_1,\dots,l_{r}\ge0 \\ l_1+\cdots+l_{r}=m }}
   yx^{k_1+l_1-1} \cdots yx^{k_r+l_r-1}
   \prod_{j=1}^{r} \binom{k_j+l_j-1}{l_j}
 \end{align*}
 modulo $x \sh \mathfrak{H}$.
\end{proof}

From the previous proposition, we immediately have the following.
\begin{prop} \label{zmZ}
 For a nonnegative integer $m$ and positive integers $k_{1},\dots,k_{r}$, we have
 \begin{align*}
  \zeta_m^{\sh} (k_{1},\dots,k_{r})
  =(-1)^m Z^{\sh} (x^myx^{k_1-1}\cdots yx^{k_r-1}).
 \end{align*}
\end{prop}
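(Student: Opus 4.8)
The plan is to apply the $\mathbb{Q}$-linear map $Z^{\sh}$ to both sides of the congruence in Proposition~\ref{regshwd} and simply read off the claimed identity. The first thing I would verify is that $Z^{\sh}$ annihilates the subspace $x\sh\mathfrak{H}$, so that a congruence modulo $x\sh\mathfrak{H}$ becomes a genuine equality after applying $Z^{\sh}$. This is where the homomorphism property of $\reg_{\sh}$ enters: since $\reg_{\sh}\colon\mathfrak{H}_{\sh}\to\mathfrak{H}^0_{\sh}$ is an algebra homomorphism with $\reg_{\sh}(x)=0$, for every $w\in\mathfrak{H}$ we have $\reg_{\sh}(x\sh w)=\reg_{\sh}(x)\sh\reg_{\sh}(w)=0$, and hence $Z^{\sh}(x\sh w)=Z(\reg_{\sh}(x\sh w))=0$.

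Next I would evaluate the right-hand side of the congruence term by term. For each tuple $(l_1,\dots,l_r)$ the word $yx^{k_1+l_1-1}\cdots yx^{k_r+l_r-1}$ equals $z_{k_1+l_1}\cdots z_{k_r+l_r}$, and I claim that $Z^{\sh}(z_{k_1+l_1}\cdots z_{k_r+l_r})=\zeta^{\sh}(k_1+l_1,\dots,k_r+l_r)$. When the index is admissible the word lies in $\mathfrak{H}^0$, on which $\reg_{\sh}$ acts as the identity, so this reduces to the defining property of $Z$; in the non-admissible case (which occurs only when $k_r=1$ and $l_r=0$) the identity is precisely the statement that $\reg_{\sh}$ computes the constant term of the shuffle regularization, which is the definition of $\zeta^{\sh}$.

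Applying $Z^{\sh}$ to Proposition~\ref{regshwd} then gives
\[
 Z^{\sh}(x^myx^{k_1-1}\cdots yx^{k_r-1})
 =(-1)^m\sum_{\substack{l_1,\dots,l_r\ge0\\l_1+\cdots+l_r=m}}\zeta^{\sh}(k_1+l_1,\dots,k_r+l_r)\prod_{j=1}^r\binom{k_j+l_j-1}{l_j},
\]
and the sum on the right is exactly $\zeta_m^{\sh}(k_1,\dots,k_r)$ by definition. Multiplying both sides by $(-1)^m$ and using $(-1)^{2m}=1$ yields the assertion.

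Everything here is formal once Proposition~\ref{regshwd} is in hand, so I expect no real obstacle; the only point that deserves care is the identification $Z^{\sh}(z_{k_1+l_1}\cdots z_{k_r+l_r})=\zeta^{\sh}(k_1+l_1,\dots,k_r+l_r)$ in the non-admissible case. This is standard, but it should be flagged that it rests on the convention that $\zeta^{\sh}$ denotes the constant term of the shuffle regularization, which is exactly what the homomorphism $\reg_{\sh}$ into $\mathfrak{H}^0_{\sh}$ (killing both $x$ and $y$) encodes.
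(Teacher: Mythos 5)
Your proposal is correct and is exactly the argument the paper intends: the paper derives Proposition~\ref{zmZ} ``immediately'' from Proposition~\ref{regshwd}, and your write-up just makes explicit the two facts this relies on, namely that $Z^{\sh}$ annihilates $x\sh\mathfrak{H}$ (via $\reg_{\sh}(x)=0$ and the homomorphism property) and that $Z^{\sh}(z_{k_1+l_1}\cdots z_{k_r+l_r})=\zeta^{\sh}(k_1+l_1,\dots,k_r+l_r)$ including the non-admissible case. No gaps; this matches the paper's route.
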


\begin{prop} \label{astsh}
 Let $\boldsymbol{k}$ be an index with no adjacent ones.
 Then we have
 \begin{align*}
  \zeta^{\ast} (\boldsymbol{k})
  =\zeta^{\sh} (\boldsymbol{k}),
  \quad
  \zeta_{\widehat{\mathcal{S}}}^{\ast} (\boldsymbol{k})
  =\zeta_{\widehat{\mathcal{S}}}^{\sh} (\boldsymbol{k}).
 \end{align*}
\end{prop}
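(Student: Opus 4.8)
The plan is to reduce everything to the scalar identity $\zeta^{\ast}(\boldsymbol{k})=\zeta^{\sh}(\boldsymbol{k})$ and then to propagate it through the definition of $\zeta_{\widehat{\mathcal{S}}}$. For the scalar identity I would appeal to the regularization theorem of Ihara--Kaneko--Zagier \cite{IKZ06}: the harmonic and shuffle regularizations refine to polynomials $\zeta^{\ast}_T(\boldsymbol{k}),\zeta^{\sh}_T(\boldsymbol{k})\in\mathcal{Z}[T]$ whose constant terms are $\zeta^{\ast}(\boldsymbol{k})$ and $\zeta^{\sh}(\boldsymbol{k})$, related by $\zeta^{\ast}_T(\boldsymbol{k})=\rho\bigl(\zeta^{\sh}_T(\boldsymbol{k})\bigr)$, where $\rho$ is the $\mathbb{R}$-linear operator on $\mathbb{R}[T]$ determined by $\rho(e^{Tu})=A(u)e^{Tu}$ with $A(u)=\exp\bigl(\sum_{n\ge2}\frac{(-1)^{n}}{n}\zeta(n)u^{n}\bigr)$. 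Because the sum defining $A(u)$ starts at $n\ge2$, one has $A(u)=1+O(u^{2})$ with vanishing linear coefficient, whence $\rho(1)=1$ and $\rho(T)=T$; thus $\rho$ restricts to the identity on polynomials of degree at most $1$. Consequently, if $\zeta^{\sh}_T(\boldsymbol{k})$ has degree at most $1$ in $T$, then $\zeta^{\ast}_T(\boldsymbol{k})=\zeta^{\sh}_T(\boldsymbol{k})$, and in particular $\zeta^{\ast}(\boldsymbol{k})=\zeta^{\sh}(\boldsymbol{k})$.

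It therefore remains to bound the degree of $\zeta^{\sh}_T(\boldsymbol{k})$, and this is exactly where the hypothesis enters. Realizing the shuffle regularization through the isomorphism $\mathfrak{H}^{1}_{\sh}\cong\mathfrak{H}^{0}_{\sh}[y]$ (cf. \cite{Reu93}, \cite{Hof97}), I claim that the degree of $\zeta^{\sh}_T(\boldsymbol{k})$ is at most (in fact equal to) the number $h$ of trailing $1$'s of $\boldsymbol{k}$. I would prove the bound $\deg_T\zeta^{\sh}_T(\boldsymbol{k})\le h$ by induction on $h$: writing the word attached to $\boldsymbol{k}$ as $w=w'y$ with $w'$ carrying $h-1$ trailing $y$'s, the single-letter shuffle satisfies $w'\sh y=h\,w+R$, where $R$ is a sum of words having at most $h-1$ trailing $y$'s; since $\sh$ becomes multiplication under the isomorphism and $y\mapsto T$, this expresses $w$ in degree at most $h$. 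An index with no adjacent ones has $h\le1$, so $\deg_T\zeta^{\sh}_T(\boldsymbol{k})\le1$ and the first identity follows.

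Finally, I would deduce the identity for $\zeta_{\widehat{\mathcal{S}}}$ by expanding the definition. Both $\zeta_{\widehat{\mathcal{S}}}^{\ast}(\boldsymbol{k})$ and $\zeta_{\widehat{\mathcal{S}}}^{\sh}(\boldsymbol{k})$ are the \emph{same} $\mathbb{Q}[[t]]$-linear combination --- with identical signs and binomial coefficients --- of the products $\zeta^{\bullet}(k_1,\dots,k_i)\,\zeta^{\bullet}(k_r+l_r,\dots,k_{i+1}+l_{i+1})$ with $\bullet\in\{\ast,\sh\}$. Each prefix $(k_1,\dots,k_i)$ inherits the no-adjacent-ones property from $\boldsymbol{k}$, and each index $(k_r+l_r,\dots,k_{i+1}+l_{i+1})$ is the reversal of a contiguous subindex of $\boldsymbol{k}$ with nonnegative integers added to its entries. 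Since adding a nonnegative integer to an entry equal to $1$ either leaves it unchanged or makes it exceed $1$, this operation creates no new pair of adjacent ones, and reversal manifestly preserves their absence; hence each such index again has no adjacent ones. Consequently every scalar factor occurring on either side satisfies the first identity, so the corresponding products agree, and therefore $\zeta_{\widehat{\mathcal{S}}}^{\ast}(\boldsymbol{k})=\zeta_{\widehat{\mathcal{S}}}^{\sh}(\boldsymbol{k})$.

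I expect the main obstacle to be the degree bound of the middle paragraph: it is the single point at which the hypothesis ``no adjacent ones'' is genuinely used, and making the inductive shuffle computation $w'\sh y=h\,w+R$ precise --- including the bookkeeping of trailing letters that guarantees the coefficient $h$ and that $R$ loses a trailing $y$ --- is the only substantively combinatorial step; the invocation of \cite{IKZ06} and the index manipulations in the last paragraph are then formal.
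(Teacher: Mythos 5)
Your proof is correct and takes essentially the same route as the paper: the paper also reduces everything to Theorem~1 of \cite{IKZ06}, quoting it as $\zeta^{\ast}(\boldsymbol{l},1)=\zeta^{\sh}(\boldsymbol{l},1)$ for admissible $\boldsymbol{l}$ (i.e.\ the scalar identity for indices with at most one trailing $1$), and then concludes ``by definitions,'' which is exactly your closure argument that prefixes, reversed suffixes, and entrywise-incremented indices retain the no-adjacent-ones property. The only difference is one of detail: you unpack the citation by deriving the key identity from the $\rho$-map form of the regularization theorem together with a degree bound on $\zeta^{\sh}_T$, whereas the paper simply cites that consequence.
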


\begin{proof}
 By \cite[Theorem 1]{IKZ06}, we have
 \[
  \zeta^{\ast} (\boldsymbol{l},1)
  =\zeta^{\sh} (\boldsymbol{l},1)
 \]
 for an index $\boldsymbol{l}$ whose last component is grater than $1$.
 Then, by definitions, we easily see the result.
\end{proof}

Let $\tau$ be the anti-automorphism on $\mathfrak{H}$ with $\tau(x)=y$ and $\tau(y)=x$.
Then the duality formula of MZVs says $Z(w)=Z(\tau(w))$ for $w\in\mathfrak{H}^0$, which is generalized as follows.
\begin{prop} \label{duality}
 For $w\in\mathfrak{H}$, we have
 \[
  Z^{\sh}(w)=Z^{\sh}(\tau(w)).
 \]
\end{prop}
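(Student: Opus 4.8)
The plan is to recognize both $w\mapsto Z^{\sh}(w)$ and $w\mapsto Z^{\sh}(\tau(w))$ as algebra homomorphisms from $\mathfrak{H}_{\sh}$ to $\mathbb{R}$ and to check that they agree on a set of algebra generators. That $Z^{\sh}=Z\circ\reg_{\sh}$ is such a homomorphism is clear, since $\reg_{\sh}\colon\mathfrak{H}_{\sh}\to\mathfrak{H}^0_{\sh}$ and $Z\colon\mathfrak{H}^0_{\sh}\to\mathbb{R}$ are both algebra homomorphisms. The substance of the plan is therefore to show that $Z^{\sh}\circ\tau$ is again a homomorphism, which reduces to showing that $\tau$ respects the shuffle product.

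First I would verify that $\tau$ is an automorphism of $\mathfrak{H}_{\sh}$, i.e.\ that $\tau(u\sh v)=\tau(u)\sh\tau(v)$ for all $u,v\in\mathfrak{H}$. The cleanest route is to factor $\tau=S\circ R$, where $R$ is the word-reversal anti-automorphism fixing $x$ and $y$, and $S$ is the letter-swapping automorphism with $S(x)=y$ and $S(y)=x$. The map $S$ merely relabels letters and so is visibly a shuffle automorphism, while $R$ carries each interleaving of two words to an interleaving of their reversals (the bookkeeping being that choosing which positions of a merged word come from $u$ corresponds bijectively, under reversal, to such a choice for $R(u)$ and $R(v)$), whence $R(u\sh v)=R(u)\sh R(v)$; the composite $\tau$ is then a shuffle automorphism as well. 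Consequently $Z^{\sh}\circ\tau=Z\circ\reg_{\sh}\circ\tau$ is an algebra homomorphism $\mathfrak{H}_{\sh}\to\mathbb{R}$.

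It then remains to compare the two homomorphisms on generators. Since $\mathfrak{H}_{\sh}$ is generated as a $\mathbb{Q}$-algebra by $\mathfrak{H}^0$ together with $x$ and $y$ — which is precisely the structure that makes the definition of $\reg_{\sh}$ legitimate — it suffices to check agreement on $\mathfrak{H}^0$ and on each of $x$ and $y$. For $x$ and $y$ this is immediate, because $\reg_{\sh}$ annihilates $x$, $y$, $\tau(x)=y$, and $\tau(y)=x$, so all four values are $Z(0)=0$. For $w\in\mathfrak{H}^0$ I would first note that $\tau$ preserves $\mathfrak{H}^0$ (a nonempty word in $y\mathfrak{H}x$ begins with $y$ and ends with $x$, and $\tau$ turns it into a word beginning with $\tau(x)=y$ and ending with $\tau(y)=x$); hence $\reg_{\sh}$ is the identity on both $w$ and $\tau(w)$, and the required identity collapses to $Z(w)=Z(\tau(w))$, which is exactly the duality formula recalled just before the statement. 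Agreement on the generators, together with both maps being algebra homomorphisms, yields $Z^{\sh}(w)=Z^{\sh}(\tau(w))$ for all $w\in\mathfrak{H}$.

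I expect the only genuinely delicate point to be the first step, namely confirming that $\tau$ is compatible with the shuffle product; the "check on generators" argument afterwards is routine. Even this step is standard — reversal is a classical symmetry of shuffle — so the real care is just in the combinatorial verification that reversal maps the interleavings of $u$ and $v$ bijectively onto those of $R(u)$ and $R(v)$.
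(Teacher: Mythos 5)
Your proof is correct and follows essentially the same route as the paper's: the paper's one-line argument writes $Z^{\sh}(w)=Z(\reg_{\sh}(w))=Z(\tau(\reg_{\sh}(w)))=Z(\reg_{\sh}(\tau(w)))=Z^{\sh}(\tau(w))$, using the duality formula $Z=Z\circ\tau$ on $\mathfrak{H}^0$ together with the asserted (but unproved) commutation of $\reg_{\sh}$ with $\tau$. Your generator-by-generator comparison of the two shuffle-algebra homomorphisms $Z^{\sh}$ and $Z^{\sh}\circ\tau$ on $\mathfrak{H}^0$, $x$, and $y$ --- resting on $\tau$ being a shuffle automorphism preserving $\mathfrak{H}^0$ --- is precisely the verification that the paper's commutation claim requires, so you have simply made explicit the detail the paper leaves implicit.
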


\begin{proof}
 Since $\reg_{\sh}$ and $\tau$ commute, we have
 \begin{align*}
  Z^{\sh}(w)=Z(\reg_{\sh}(w))=Z(\tau(\reg_{\sh}(w)))=Z(\reg_{\sh}(\tau(w)))=Z^{\sh}(\tau(w)),
 \end{align*}
 as required.
\end{proof}

\subsection{Alternating sums of shuffle products}
In this subsection, we compute several alternating sums of shuffle products to be used later in the proof of the main theorems.\begin{lem} \label{wordA}
 For a nonnegative integer $n$, we have
 \[
  \sum_{i=0}^{n}(-1)^ix(yx)^i\sh(yx)^{n-i}=
  \begin{cases}
   (-1)^{n/2}2^nx(y^2x^2)^{n/2}&\textrm{if $n$ is even},\\
   (-1)^{(n-1)/2}2^ny(x^2y^2)^{(n-1)/2}x^2&\textrm{if $n$ is odd}.
  \end{cases}
 \]
\end{lem}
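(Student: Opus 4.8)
The plan is to prove the identity by induction on $n$, after reducing it to a simple two-term recursion. Write $P_n$ for the left-hand side, and introduce the two auxiliary alternating shuffle sums
\[
 C_m=\sum_{i=0}^{m}(-1)^i x(yx)^i\sh x(yx)^{m-i},\qquad
 D_m=\sum_{i=0}^{m}(-1)^i (yx)^i\sh (yx)^{m-i}.
\]
The first observation, which costs nothing, is that $C_m=D_m=0$ whenever $m$ is odd: since $\sh$ is commutative, pairing the index $i$ with $m-i$ in either sum cancels the two contributions because $(-1)^i+(-1)^{m-i}=0$ for odd $m$.

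Next I would peel off leading letters using the defining recursion $uw\sh u'w'=u(w\sh u'w')+u'(uw\sh w')$. Writing $x(yx)^i=x\cdot(yx)^i$ and $(yx)^j=y\cdot x(yx)^{j-1}$ for $j\ge1$, one application gives $x(yx)^i\sh(yx)^j=x\bigl((yx)^i\sh(yx)^j\bigr)+y\bigl(x(yx)^i\sh x(yx)^{j-1}\bigr)$ for $j\ge 1$. Summing this against $(-1)^i$ over $0\le i\le n-1$ and adding the boundary term $i=n$, which exactly cancels against the boundary word $x(yx)^n$ produced on the right, yields the clean decomposition
\[
 P_n=x D_n+y C_{n-1}.
\]
In the same spirit, peeling the common leading $x$ from the two factors of $C_m$ and reindexing one of the two resulting sums by $i\mapsto m-i$ (again using commutativity, which identifies $\sum_i(-1)^i (yx)^i\sh x(yx)^{m-i}$ with $(-1)^m P_m$) gives $C_m=(1+(-1)^m)\,x P_m$; in particular $C_m=2xP_m$ for even $m$.

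The computational heart of the argument is the companion identity $D_n=-2y P_{n-1}$ for even $n$. To obtain it I would peel the leading $y$ from both factors of each interior term $(yx)^i\sh(yx)^{n-i}$ ($1\le i\le n-1$), which after reindexing produces the sums $-P_{n-1}+(-1)^{n-1}x(yx)^{n-1}$ and $(-1)^{n-1}P_{n-1}-x(yx)^{n-1}$, together with the two boundary terms $i=0,n$ that contribute $2(yx)^n=2y\,x(yx)^{n-1}$ when $n$ is even. For even $n$ all the words $y\,x(yx)^{n-1}$ cancel exactly, leaving $-2yP_{n-1}$. This step is where the bookkeeping is most delicate—keeping track of which terms are removed or added when passing between the truncated sums and the full $P_{n-1}$, and verifying the boundary cancellations—so I expect it to be the main obstacle.

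Finally, combining these relations with the vanishing from the first step gives the two-term recursion
\[
 P_n=
 \begin{cases}
  -2xy\,P_{n-1}&\text{if $n$ is even},\\
  2yx\,P_{n-1}&\text{if $n$ is odd},
 \end{cases}
\]
since $C_{n-1}=0$ for even $n$ and $D_n=0$ for odd $n$. Starting from $P_0=x\sh 1=x$ and applying this recursion, a straightforward induction finishes the proof: for even $n$ one uses $y^2(x^2y^2)^{k}x^2=(y^2x^2)^{k+1}$ to see that $-2xy$ sends $y(x^2y^2)^{k}x^2$ to $-2x(y^2x^2)^{k+1}$, and for odd $n$ one uses $x^2(y^2x^2)^{k}=(x^2y^2)^{k}x^2$ to see that $2yx$ sends $x(y^2x^2)^{k}$ to $2y(x^2y^2)^{k}x^2$; tracking the powers of $2$ and the signs then reproduces exactly the two closed forms in the statement.
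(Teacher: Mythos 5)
Your proof is correct and takes essentially the same approach as the paper: both arguments come down to establishing the recursion $P_n=-2xy\,P_{n-1}$ for even $n$ and $P_n=2yx\,P_{n-1}$ for odd $n$, and then inducting from $P_0=x$. The only difference is bookkeeping: the paper obtains this recursion in a single display by peeling the first two letters of each shuffle term at once, whereas you reach it by peeling one letter at a time via the auxiliary sums $C_m$ and $D_m$ (your identity $C_m=(1+(-1)^m)xP_m$ is exactly the paper's Lemma~\ref{word2}, which the paper instead deduces afterwards as a corollary of this lemma, with no circularity in either ordering).
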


\begin{proof}
 Put $a_{n}=\sum_{i=0}^{n}(-1)^{i}x(yx)^{i}\sh(yx)^{n-i}$.
 If $n\ge1$,  we have
 \begin{align*}
  a_{n}
  &=xy\sum_{i=1}^{n}(-1)^{i}x(yx)^{i-1}\sh(yx)^{n-i}
   +xy\sum_{i=0}^{n-1}(-1)^{i}(yx)^{i}\sh x(yx)^{n-i-1}\\
  &\quad +yx\sum_{i=0}^{n-1}(-1)^{i}(yx)^{i}\sh x(yx)^{n-i-1}
   +yx\sum_{i=0}^{n-1}(-1)^{i}x(yx)^{i}\sh(yx)^{n-i-1} \\
  &=( -(1+(-1)^n)xy +(1-(-1)^{n})yx ) a_{n-1}.
 \end{align*}
 Since $a_0=x$, we obtain the result by induction on $n$.
\end{proof}

\begin{lem} \label{word1}
 For a nonnegative integer $n$, we have
 \[
  \sum_{i=0}^{n}(-1)^ix(yx)^i\sh(xy)^{n-i}=
  \begin{cases}
   (-1)^{n/2}2^n(x^2y^2)^{n/2} x
    &\textrm{if $n$ is even}, \\
   (-1)^{(n-1)/2} 2^n x^2(y^2x^2)^{(n-1)/2}y
    &\textrm{if $n$ is odd}.
  \end{cases}
 \]
\end{lem}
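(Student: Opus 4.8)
The plan is to deduce this lemma from Lemma~\ref{wordA} by means of the word-reversal map rather than by a fresh induction. Let $\rho\colon\mathfrak{H}\to\mathfrak{H}$ be the $\mathbb{Q}$-linear involution that reverses each word, so that $\rho(u_1\cdots u_p)=u_p\cdots u_1$ for letters $u_j\in\{x,y\}$. The property I would rely on is that $\rho$ is an automorphism of $\mathfrak{H}_{\sh}$, i.e.\ $\rho(w\sh w')=\rho(w)\sh\rho(w')$ for all $w,w'\in\mathfrak{H}$; this is standard (cf.\ \cite{Reu93}) and holds because reversing an interleaving of two words yields an interleaving of the reversed words, which sets up a bijection between the shuffles of $(w,w')$ and those of $(\rho(w),\rho(w'))$. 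If one prefers not to invoke this, the entire lemma can instead be proved by an induction on $n$ mirroring the recursion for Lemma~\ref{wordA}, with $(yx)^{n-i}$ replaced by $(xy)^{n-i}$ throughout.

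Next I would record two elementary facts. First, each word $x(yx)^i$ is a palindrome, so $\rho(x(yx)^i)=x(yx)^i$. Second, $\rho((yx)^{n-i})=(xy)^{n-i}$. Applying $\rho$ term by term to the sum in Lemma~\ref{wordA} and using that $\rho$ preserves $\sh$ then gives
\[
 \sum_{i=0}^{n}(-1)^i x(yx)^i\sh(xy)^{n-i}
 =\rho\Biggl(\sum_{i=0}^{n}(-1)^i x(yx)^i\sh(yx)^{n-i}\Biggr),
\]
so that the left-hand side of the desired identity is exactly the image under $\rho$ of the right-hand side of Lemma~\ref{wordA}.

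It then remains to reverse the two closed-form words supplied by Lemma~\ref{wordA}. When $n$ is even, writing $m=n/2$ one has $\rho\bigl(x(y^2x^2)^m\bigr)=(x^2y^2)^m x$, since reversing $x(yyxx)^m$ turns each block $yyxx$ into $xxyy$ and carries the leading $x$ to the end. When $n$ is odd, writing $m=(n-1)/2$ one has $\rho\bigl(y(x^2y^2)^m x^2\bigr)=x^2(y^2x^2)^m y$ by the same bookkeeping. Because $\rho$ is $\mathbb{Q}$-linear it fixes the scalar factors $(-1)^{n/2}2^n$ and $(-1)^{(n-1)/2}2^n$, and substituting these reversals into the displayed equation yields precisely the two cases of the claimed formula.

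The only genuinely delicate point is the compatibility of $\rho$ with the shuffle product; once that is granted, the remaining steps are routine palindrome and block-reversal computations. Should that compatibility be deemed unavailable, the fallback induction mirroring Lemma~\ref{wordA} is the safer route, its main obstacle being the careful tracking of the $x$/$y$ leading-letter bookkeeping when the second factor begins with $xy$ rather than $yx$.
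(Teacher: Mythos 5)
Your proof is correct and is essentially the paper's own argument: the paper's proof of Lemma~\ref{word1} simply says ``By looking at the reversal of Lemma~\ref{wordA}, we find the result,'' and your write-up spells out exactly that reversal argument (the anti-automorphism property of word reversal with respect to $\sh$, the palindromicity of $x(yx)^i$, and the block-reversal of the closed-form words). No gaps; you have just made explicit the details the paper leaves to the reader.
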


\begin{proof}
 By looking at the reversal of Lemma \ref{wordA}, we find the result.
\end{proof}

\begin{lem} \label{wordB}
 For a nonnegative integer $n$, we have
 \begin{align*}
  &\sum_{i=0}^{n} (-1)^i x(yx)^i \sh (yx)^{n-i}y \\
  &=
  \begin{cases}
   (-1)^{n/2} 2^n
   ( x(y^2x^2)^{n/2}y +(yx^2y)^{n/2}yx )
   &\textrm{if $n$ is even}, \\
   (-1)^{(n-1)/2} 2^n
   ( -xy^2(x^2y^2)^{(n-1)/2}x +y(x^2y^2)^{(n-1)/2}x^2y )
   &\textrm{if $n$ is odd}.
  \end{cases}
 \end{align*}
\end{lem}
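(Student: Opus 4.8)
The plan is to reduce the statement to Lemma~\ref{wordA} by stripping off the \emph{trailing} letter of each shuffle and keeping track of how the resulting alternating sums interlock. Write $A_n=\sum_{i=0}^{n}(-1)^i x(yx)^i\sh(yx)^{n-i}$ for the left-hand side of Lemma~\ref{wordA}, let $b_n$ denote the target sum $\sum_{i=0}^{n}(-1)^i x(yx)^i\sh(yx)^{n-i}y$, and introduce the two companion sums
\[
 B'_m=\sum_{j=0}^{m}(-1)^j x(yx)^j y\sh(yx)^{m-j}y,\qquad C_m=\sum_{j=0}^{m}(-1)^j x(yx)^j y\sh(yx)^{m-j}.
\]
The whole computation rests on the right-hand (last-letter) form of the shuffle recursion, $wu\sh w'u'=(wu\sh w')u'+(w\sh w'u')u$, which follows from the defining left-hand recursion by the reversal symmetry of $\sh$ already exploited in the proof of Lemma~\ref{word1}; I would record it once at the outset.

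First I would apply this recursion to each of $b_n$, $B'_m$, and $C_m$, always splitting off the final letter and collecting the two branches. This produces three identities: (i) $b_n=A_n y+(yx)^n yx-B'_{n-1}x$, where the ``last letter $y$'' branch assembles exactly into $A_n y$; (ii) $B'_m=(C_m+b_m)y$; and (iii) $C_m=B'_{m-1}x+A_m y$. Each is routine bookkeeping of the $i=0$ and $i=n$ boundary terms together with a reindexing $j=i-1$ in the interior terms.

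The key step, and the main obstacle, is that identities (i)--(iii) are coupled through the unknown tails $B'_{m-1}x$, and no one of them closes on its own. The decisive observation is that forming the combination $C_m+b_m$ makes these tails cancel: substituting (i) and (iii) gives $C_m+b_m=2A_m y+(yx)^m yx$, whence (ii) yields the closed form $B'_m=2A_m y^2+(yx)^{m+1}y$. Feeding this back into (i) collapses the boundary term $(yx)^n yx$ as well and leaves the clean relation
\[
 b_n=A_n y-2A_{n-1}y^2x\qquad(n\ge1),
\]
with the base case $b_0=xy+yx$ checked directly. Recognizing and exploiting this cancellation is what turns an apparently circular system into an explicit formula.

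Finally I would substitute the explicit evaluation of $A_n$ and $A_{n-1}$ from Lemma~\ref{wordA}, treating the even and odd cases of $n$ separately. In each case the term $A_n y$ reproduces verbatim the first word in the asserted formula, while $-2A_{n-1}y^2x$ reproduces the second after the elementary word identity $y(x^2y^2)^k x=(yx^2y)^k yx$ (for $n$ even) and the analogous regrouping (for $n$ odd). This last part is pure simplification and parity bookkeeping, with no further conceptual content.
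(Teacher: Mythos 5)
Your proof is correct, but it takes a genuinely different route from the paper's. The paper peels off the \emph{first} letter of each shuffle rather than the last: writing $a_n$ and $a_n'$ for the sums in Lemmas~\ref{wordA} and~\ref{word1} (so your $A_n$ is $a_n$), it observes the single identity
\[
 \sum_{i=0}^{n}(-1)^{i}x(yx)^{i}\sh(yx)^{n-i}y=(-1)^{n}x\tau(a_{n})+ya_{n}',
\]
where $\tau$ is the anti-automorphism exchanging $x$ and $y$ (an automorphism of the shuffle algebra), and then simply quotes both lemmas; the duality $\tau$ does all the work in one line. Your argument instead never invokes Lemma~\ref{word1}: it uses the right-hand shuffle recursion, introduces the auxiliary sums $B'_m$ and $C_m$, and solves the coupled system (i)--(iii) by exploiting the cancellation $C_m+b_m=2A_my+(yx)^myx$, giving the closed form $B'_m=2A_my^2+(yx)^{m+1}y$ and then the clean intermediate relation $b_n=A_ny-2A_{n-1}y^2x$ for $n\ge1$, from which the stated formula follows by substituting Lemma~\ref{wordA} alone. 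I verified (i)--(iii), the cancellation, the closed form for $B'_m$, the base case, and both parity cases of the final substitution; everything checks out, including the word identities $y(x^2y^2)^kx=(yx^2y)^kyx$ and $(y^2x^2)^ky^2=y^2(x^2y^2)^k$. (One cosmetic slip: for odd $n$ it is $A_ny$ that yields $y(x^2y^2)^{(n-1)/2}x^2y$ and $-2A_{n-1}y^2x$ that yields $-xy^2(x^2y^2)^{(n-1)/2}x$, so the roles of ``first'' and ``second'' words are swapped relative to what you wrote; this does not affect the proof.) The trade-off: the paper's proof is shorter and reuses Lemma~\ref{word1}, while yours is more mechanical but self-contained modulo Lemma~\ref{wordA} and the last-letter recursion, and it produces the explicit identity $b_n=A_ny-2A_{n-1}y^2x$, which is invisible in the paper's argument and could be of independent use.
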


\begin{proof}
Let $a_{n}$ and $a_{n}'$ be the left-hand sides of Lemmas \ref{wordA} and \ref{word1}, respectively.
Then
\[
\sum_{i=0}^{n}(-1)^{i}x(yx)^{i}\sh(yx)^{n-i}y=(-1)^{n}x\tau(a_{n})+ya_{n}'.
\]
Thus the claim follows from Lemmas \ref{wordA} and \ref{word1}.
\end{proof}

\begin{lem} \label{word2}
 For a nonnegative integer $n$, we have
 \begin{align*}
  \sum_{i=0}^{n} (-1)^i x(yx)^i \sh x(yx)^{n-i}
  &=
  \begin{cases}
   (-1)^{n/2} 2^{n+1} x^2(y^2x^2)^{n/2}
    &\textrm{if $n$ is even}, \\
   0
    &\textrm{if $n$ is odd}.
  \end{cases}
 \end{align*}
\end{lem}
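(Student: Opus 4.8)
The plan is to reduce the identity to Lemma~\ref{wordA} by peeling off the leading letter $x$ from each of the two shuffled words. Write $S_n$ for the left-hand side, and recall that $a_n=\sum_{i=0}^n(-1)^i x(yx)^i\sh(yx)^{n-i}$ denotes the left-hand side of Lemma~\ref{wordA}, whose value is already known.

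First I would apply the defining recursion of the shuffle product to each summand. Since both factors $x(yx)^i$ and $x(yx)^{n-i}$ begin with $x$, the recursion gives
\[
 x(yx)^i\sh x(yx)^{n-i}=x\bigl((yx)^i\sh x(yx)^{n-i}\bigr)+x\bigl(x(yx)^i\sh(yx)^{n-i}\bigr).
\]
Summing over $i$ against the signs $(-1)^i$ and factoring out the common leading $x$ yields
\[
 S_n=x\sum_{i=0}^n(-1)^i\bigl((yx)^i\sh x(yx)^{n-i}\bigr)+x\sum_{i=0}^n(-1)^i\bigl(x(yx)^i\sh(yx)^{n-i}\bigr),
\]
in which the second sum is exactly $a_n$.

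Next I would treat the first sum using the commutativity of $\sh$ together with the reindexing $j=n-i$, which turns it into $\sum_{j=0}^n(-1)^{n-j}x(yx)^j\sh(yx)^{n-j}=(-1)^n a_n$. Combining the two contributions gives $S_n=(1+(-1)^n)\,x\,a_n$. When $n$ is odd the prefactor $1+(-1)^n$ vanishes, so $S_n=0$; when $n$ is even it equals $2x\,a_n$, and substituting the even-case formula $a_n=(-1)^{n/2}2^n x(y^2x^2)^{n/2}$ from Lemma~\ref{wordA} produces $(-1)^{n/2}2^{n+1}x^2(y^2x^2)^{n/2}$, which is precisely the asserted value.

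I do not expect a genuine obstacle here, since the whole argument is a short formal manipulation in $\mathfrak{H}_{\sh}$ that feeds directly into an already-proved lemma. The only point demanding care is the sign bookkeeping in the reindexing step, where $(-1)^{n-j}$ must be split as $(-1)^n(-1)^j$ so that the reindexed sum is correctly recognized as $a_n$ up to the global sign $(-1)^n$; everything else is routine.
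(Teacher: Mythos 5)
Your proof is correct and follows the same route as the paper: the paper's proof consists precisely of the identity $\sum_{i=0}^{n}(-1)^{i}x(yx)^{i}\sh x(yx)^{n-i}=(1+(-1)^{n})xa_{n}$ followed by an appeal to Lemma~\ref{wordA}, stating that identity without derivation. Your peeling-off of the leading $x$ via the shuffle recursion and the reindexing $j=n-i$ simply supplies the routine verification that the paper leaves implicit, so there is nothing to correct.
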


\begin{proof}
Let $a_{n}$ be the left-hand side of Lemma \ref{wordA}. Then
\[
\sum_{i=0}^{n}(-1)^{i}x(yx)^{i}\sh x(yx)^{n-i}=(1+(-1)^{n})xa_{n}.
\]
Thus the claim follows from Lemma \ref{wordA}.
\end{proof}

\subsection{Harmonic product and multiple zeta-star values}
Let $\mathcal{I}$ be the $\mathbb{Q}$-vector space freely generated by all indices.
We define the $\mathbb{Q}$-bilinear product $\ast$ on $\mathcal{I}$ inductively by setting
\begin{align*}
 \boldsymbol{k} \ast \emptyset
 &=\emptyset \ast \boldsymbol{k}=\boldsymbol{k}, \\
  (\boldsymbol{k},k) \ast (\boldsymbol{l},l)
 &=(\boldsymbol{k} \ast (\boldsymbol{l},l),k)
  +(\boldsymbol{k} \ast \boldsymbol{l},k+l)
  +((\boldsymbol{k},k) \ast \boldsymbol{l},l)
\end{align*}
for all indices $\boldsymbol{k}$, $\boldsymbol{l}$ and all positive integers $k$, $l$.
Note that $\zeta(\boldsymbol{k} \ast \boldsymbol{l})=\zeta(\boldsymbol{k})\zeta(\boldsymbol{l})$ holds for any indices $\boldsymbol{k}$ and $\boldsymbol{l}$ whose last component is grater than $1$.

For each admissible index $\boldsymbol{k}=(k_1,\dots,k_r)$, the multiple zeta-star value (MZSV) is defined by
\[
 \zeta^{\star}(\boldsymbol{k})=\zeta^{\star}(k_1,\dots,k_r)=\sum_{1\le n_1\le\dots\le n_r}\frac{1}{n_1^{k_1}\dotsm n_r^{k_r}}\in\mathbb{R},
\]
where we understand that $\zeta^{\star}(\emptyset)=1$.
It is well known that
\[
 \sum_{i=0}^{r}(-1)^i\zeta(k_1,\dots,k_i)\zeta^{\star}(k_r,\dots,k_{i+1})=\delta_{r,0}
\]
for every index $\boldsymbol{k}=(k_1,\dots,k_r)$ satisfying $k_1,\dots,k_r\ge2$, which is sometimes referred to as the antipode formula.

\subsection{Computation of generating series}
In this subsection, we compute several generating series to be used later in the proof of our main theorems.
It turns out all generating series that we will need can be expressed in terms of the four generating series $F_+,F_-,G_+,G_-\in\mathbb{R}[[u]]$ defined by
\[
 F_{\pm}=\sum_{n\ge0}(\pm2)^{-n}\zeta(2n+1)u^{2n+1},\qquad
 G_{\pm}=\sum_{n\ge0}(\pm2)^{-n}\zeta(\{2\}^n)u^{2n}.
\]

Observe that
\begin{align*}
 F_++F_-&=2\sum_{\substack{n\ge0\\n:\mathrm{even}}}2^{-n}\zeta(2n+1)u^{2n+1}=2\sum_{n\ge0}4^{-n}\zeta(4n+1)u^{4n+1},\\
 F_+-F_-&=2\sum_{\substack{n\ge0\\n:\mathrm{odd}}}2^{-n}\zeta(2n+1)u^{2n+1}=\sum_{n\ge0}4^{-n}\zeta(4n+3)u^{4n+3},\\
 G_++G_-&=2\sum_{\substack{n\ge0\\n:\mathrm{even}}}2^{-n}\zeta(\{2\}^n)u^{2n}=2\sum_{n\ge0}4^{-n}\zeta(\{2\}^{2n})u^{4n},\\
 G_+-G_-&=2\sum_{\substack{n\ge0\\n:\mathrm{odd}}}2^{-n}\zeta(\{2\}^n)u^{2n}=\sum_{n\ge0}4^{-n}\zeta(\{2\}^{2n+1})u^{4n+2}
\end{align*}
and that
\[
 \sum_{n\ge0}(\pm2)^{-n}\zeta^{\star}(\{2\}^n)u^{2n}=G_{\mp}^{-1}
\]
by the antipode formula.

\begin{lem}\label{lem:zeta(4^n)_gen}
 We have
 \[
  G_+G_-
  =\sum_{n\ge0}(-1)^n\zeta(\{1,3\}^n)u^{4n}
  =\sum_{n=0}^{\infty}(-4)^{-n}\zeta(\{4\}^n)u^{4n}
  =\biggl(\sum_{n=0}^{\infty}4^{-n}\zeta^{\star}(\{4\}^n)u^{4n}\biggr)^{-1}
 \]
 and
 \[
  G_{\pm}^2=\sum_{n\ge0}\frac{(\pm1)^n2^{n+1}\pi^{2n}}{(2n+2)!}u^{2n},\qquad
  \frac{G_+^2+G_-^2}{2}=\sum_{n\ge0}\zeta(\{4\}^n)u^{4n}.
 \]
\end{lem}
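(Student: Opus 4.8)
The plan is to reduce every assertion to the two elementary closed forms $G_+=\sinh(w)/w$ and $G_-=\sin(w)/w$, where $w:=\pi u/\sqrt{2}$, and then read off each identity from a standard hyperbolic/trigonometric expansion. First I would recall the classical evaluation $\zeta(\{2\}^n)=\pi^{2n}/(2n+1)!$. Since $w^{2n}=\pi^{2n}u^{2n}/2^{n}$, this gives directly
\[
 G_+=\sum_{n\ge0}\frac{w^{2n}}{(2n+1)!}=\frac{\sinh w}{w},\qquad
 G_-=\sum_{n\ge0}\frac{(-1)^n w^{2n}}{(2n+1)!}=\frac{\sin w}{w},
\]
both of which are manifestly real power series in $u$ involving only even powers.

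For the squares I would use $\sinh^2 w=(\cosh 2w-1)/2$ and $\sin^2 w=(1-\cos 2w)/2$, so that $G_{\pm}^2$ becomes $(\cosh 2w-1)/(2w^2)$ and $(1-\cos 2w)/(2w^2)$ respectively; expanding $\cosh$ and $\cos$ and resubstituting $w^2=\pi^2 u^2/2$ yields $G_{\pm}^2=\sum_{n}(\pm1)^n 2^{n+1}\pi^{2n}u^{2n}/(2n+2)!$ after routine simplification. Adding the two gives $(G_+^2+G_-^2)/2=(\cosh 2w-\cos 2w)/(4w^2)$, in which only the degrees $\equiv 2\pmod 4$ of $\cosh 2w-\cos 2w$ survive; this produces $\sum_{n}2^{2n+1}\pi^{4n}u^{4n}/(4n+2)!$, which equals $\sum_{n}\zeta(\{4\}^n)u^{4n}$ by the classical evaluation $\zeta(\{4\}^n)=2^{2n+1}\pi^{4n}/(4n+2)!$.

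For the product, the key computation is the power series of $\sinh w\sin w$: writing $\sinh w\sin w=-\tfrac{i}{2}\bigl(\cos((1-i)w)-\cos((1+i)w)\bigr)$ and using $(1\mp i)^2=\mp 2i$, one finds that again only the degrees $\equiv 2\pmod 4$ survive and that $G_+G_-=(\sinh w\sin w)/w^2=\sum_{n}(-1)^n 2\pi^{4n}u^{4n}/(4n+2)!$. This matches $\sum_{n}(-1)^n\zeta(\{1,3\}^n)u^{4n}$ by the evaluation $\zeta(\{1,3\}^n)=2\pi^{4n}/(4n+2)!$ \cite{BBBL98}, and equals $\sum_{n}(-4)^{-n}\zeta(\{4\}^n)u^{4n}$ since $(-4)^{-n}\zeta(\{4\}^n)=(-1)^n 2\pi^{4n}/(4n+2)!$ by the formula for $\zeta(\{4\}^n)$ above.

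Finally, for the last equality I would invoke the antipode formula for the index $(\{4\}^N)$, namely $\sum_{i=0}^{N}(-1)^i\zeta(\{4\}^i)\zeta^{\star}(\{4\}^{N-i})=\delta_{N,0}$. Multiplying $\sum_{n}(-4)^{-n}\zeta(\{4\}^n)u^{4n}$ by $\sum_{n}4^{-n}\zeta^{\star}(\{4\}^n)u^{4n}$, the coefficient of $u^{4N}$ is $4^{-N}\sum_{i+j=N}(-1)^i\zeta(\{4\}^i)\zeta^{\star}(\{4\}^j)=\delta_{N,0}$, so the two series are mutually inverse; this is exactly the reasoning already used for the $\{2\}$-analogue just before the lemma. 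I expect the main obstacle to be the $\sinh w\sin w$ expansion together with careful bookkeeping of the $\sqrt{2}$ in $w=\pi u/\sqrt{2}$ and of which monomials $w^{4n}$ versus $w^{4n+2}$ contribute; the remaining steps are standard series manipulations or direct appeals to the antipode formula.
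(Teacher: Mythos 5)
Your proposal is correct and follows essentially the same route as the paper: both rest on the classical evaluations $\zeta(\{2\}^n)=\pi^{2n}/(2n+1)!$, $\zeta(\{1,3\}^n)=2\pi^{4n}/(4n+2)!$, $\zeta(\{4\}^n)=2^{2n+1}\pi^{4n}/(4n+2)!$, together with the antipode formula for the final inversion. The only difference is cosmetic: you package the series multiplications through the closed forms $G_+=\sinh w/w$, $G_-=\sin w/w$ (with $w=\pi u/\sqrt{2}$) and trigonometric identities, whereas the paper performs the same computation coefficientwise via the binomial theorem --- its $\im((1+i)^{2n+2})$ is precisely your product-to-sum identity for $\cos((1\mp i)w)$ read off degree by degree.
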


\begin{proof}
 Recall that
 \[
  \zeta(\{2\}^n)=\frac{\pi^{2n}}{(2n+1)!},\qquad
  \zeta(\{1,3\}^n)=\frac{2\pi^{4n}}{(4n+2)!},\qquad
  \zeta(\{4\}^n)=\frac{2^{2n+1}\pi^{4n}}{(4n+2)!}
 \]
 for every nonnegative integer $n$ (see Borwein, Bradley, Broadhurst, and Lison\v{e}k \cite[Example 2.2]{BBBL98} for the last two identities).
 It follows that
 \begin{align*}
  G_+G_-
  &=\biggl(\sum_{n_1\ge0}\frac{1}{(2n_1+1)!}\biggl(\frac{\pi^2u^2}{2}\biggr)^{n_1}\biggl)
    \biggl(\sum_{n_2\ge0}\frac{(-1)^{n_2}}{(2n_2+1)!}\biggl(\frac{\pi^2u^2}{2}\biggr)^{n_2}\biggr)\\
  &=\sum_{n\ge0}\biggl(\sum_{\substack{n_1+n_2=n\\n_1,n_2\ge0}}\frac{(-1)^{n_2}}{(2n_1+1)!(2n_2+1)!}\biggr)\biggl(\frac{\pi^2u^2}{2}\biggr)^n\\
  &=\sum_{n\ge0}\frac{\im((1+i)^{2n+2})}{(2n+2)!}\biggl(\frac{\pi^2u^2}{2}\biggr)^n\qquad(\text{binomial theorem})\\
  &=\sum_{\substack{n\ge0\\n:\mathrm{even}}}\frac{(-1)^{n/2}2^{n+1}}{(2n+2)!}\biggl(\frac{\pi^2u^2}{2}\biggr)^n\\
  &=\sum_{n\ge0}\frac{2(-1)^{n}\pi^{4n}}{(4n+2)!}u^{4n},
 \end{align*}
 which together with the antipode formula implies that
 \[
  G_+G_-
  =\sum_{n\ge0}(-1)^n\zeta(\{1,3\}^n)u^{4n}
  =\sum_{n=0}^{\infty}(-4)^{-n}\zeta(\{4\}^n)u^{4n}
  =\biggl(\sum_{n=0}^{\infty}4^{-n}\zeta^{\star}(\{4\}^n)u^{4n}\biggr)^{-1}.
 \]
 We also have
 \begin{align*}
  G_{\pm}^2
  &=\biggl(\sum_{n_1\ge0}\frac{1}{(2n_1+1)!}\biggl(\pm\frac{\pi^2u^2}{2}\biggr)^{n_1}\biggl)
    \biggl(\sum_{n_2\ge0}\frac{1}{(2n_2+1)!}\biggl(\pm\frac{\pi^2u^2}{2}\biggr)^{n_2}\biggr)\\
  &=\sum_{n\ge0}\biggl(\sum_{\substack{n_1+n_2=n\\n_1,n_2\ge0}}\frac{1}{(2n_1+1)!(2n_2+1)!}\biggr)\biggl(\pm\frac{\pi^2u^2}{2}\biggr)^n\\
  &=\sum_{n\ge0}\frac{2^{2n+1}}{(2n+2)!}\biggl(\pm\frac{\pi^2u^2}{2}\biggr)^n\qquad(\text{binomial theorem})\\
  &=\sum_{n\ge0}\frac{(\pm1)^n2^{n+1}\pi^{2n}}{(2n+2)!}u^{2n},
 \end{align*}
 which implies that
 \[
  \frac{G_+^2+G_-^2}{2}
  =\sum_{\substack{n\ge0\\n:\mathrm{even}}}\frac{2^{n+1}\pi^{2n}}{(2n+2)!}u^{2n}
  =\sum_{\substack{n\ge0}}\frac{2^{2n+1}\pi^{4n}}{(4n+2)!}u^{4n}
  =\sum_{\substack{n\ge0}}\zeta(\{4\}^n)u^{4n}.\qedhere
 \]
\end{proof}

\begin{lem}\label{lem:zeta(3,1,3)_gen}
 We have
 \[
  \sum_{n\ge0}(-1)^n\zeta(\{3,1\}^n,3)u^{4n+3}=(F_+-F_-)G_+G_-.
 \]
\end{lem}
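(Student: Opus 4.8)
The plan is to realize the target generating series as (part of) the image under $Z^{\sh}$ of a single shuffle product coming from Lemma~\ref{wordA}, and then to read off the three factors $F_+-F_-$, $G_+$, $G_-$ one at a time. Concretely, I would work in $\mathfrak{H}_{\sh}[[u]]$ with the two series $T=\sum_{i\ge0}(-1)^ix(yx)^iu^{2i+1}$ and $Q=\sum_{j\ge0}(yx)^ju^{2j}$, so that by Lemma~\ref{wordA} the coefficient of $u^{2n+1}$ in $T\sh Q$ is exactly the word $a_n$ computed there. The crucial observation is that for odd $n=2k+1$ the word $a_{2k+1}=(-1)^k2^{2k+1}\,y(x^2y^2)^kx^2$ is, up to the scalar $(-1)^k2^{2k+1}$, precisely the word $(yx^2y)^kyx^2$ representing the index $(\{3,1\}^k,3)$; thus the part of $T\sh Q$ in degrees $\equiv3\pmod4$ carries exactly the values we want (while the even-$n$ terms, in degrees $\equiv1\pmod4$, would feed a companion statement). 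Since all these words are admissible, applying $Z^{\sh}$ shows that the degree-$\equiv3\pmod4$ part of $Z^{\sh}(T\sh Q)$ equals $\sum_{k\ge0}(-1)^k2^{2k+1}\zeta(\{3,1\}^k,3)u^{4k+3}$.

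Next I would compute $Z^{\sh}(T\sh Q)=Z^{\sh}(T)\,Z^{\sh}(Q)$ by multiplicativity of $Z^{\sh}$. The factor $Z^{\sh}(Q)=\sum_{j\ge0}\zeta(\{2\}^j)u^{2j}$ is immediate from $\zeta(\{2\}^j)=\pi^{2j}/(2j+1)!$ and coincides with $G_+(\sqrt2\,u)$. The factor $Z^{\sh}(T)=\sum_{i\ge0}(-1)^iZ^{\sh}(x(yx)^i)u^{2i+1}$ is where the real work lies. Here I would apply Proposition~\ref{zmZ} with $m=1$, giving $Z^{\sh}(x(yx)^i)=-\zeta_1^{\sh}(\{2\}^i)=-2\sum_{p=1}^i\zeta(\{2\}^{p-1},3,\{2\}^{i-p})$; feeding this into the classical evaluation of the values $\zeta(\{2\}^a,3,\{2\}^b)$ (Borwein--Bradley--Broadhurst--Lison\v{e}k~\cite{BBBL98}), which amounts to the sum formula $\sum_{p=1}^i\zeta(\{2\}^{p-1},3,\{2\}^{i-p})=\sum_{m+j=i}(-1)^{m+1}\zeta(2m+1)\zeta(\{2\}^j)$, turns the resulting double sum into a Cauchy product and yields $Z^{\sh}(T)=\sqrt2\,F_+(\sqrt2\,u)G_-(\sqrt2\,u)$.

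Combining the two factors gives $Z^{\sh}(T\sh Q)=\sqrt2\,F_+(\sqrt2\,u)G_+(\sqrt2\,u)G_-(\sqrt2\,u)$, and it remains to extract the degree-$\equiv3\pmod4$ part and compare. By Lemma~\ref{lem:zeta(4^n)_gen} the product $G_+G_-$ involves only degrees $\equiv0\pmod4$, so this projection acts only on the $F_+$ factor; and from the relation $F_+-F_-=\sum_{n\ge0}4^{-n}\zeta(4n+3)u^{4n+3}$ displayed before Lemma~\ref{lem:zeta(4^n)_gen}, the degree-$\equiv3\pmod4$ part of $F_+$ is exactly $\tfrac12(F_+-F_-)$. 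Matching the two expressions for the degree-$\equiv3\pmod4$ part of $Z^{\sh}(T\sh Q)$ and undoing the substitution $v=\sqrt2\,u$ (under which the powers of $\sqrt2$ cancel: $2^{2k+1}u^{4k+3}=v^{4k+3}/\sqrt2$ and the prefactor $\sqrt2\cdot\tfrac12=1/\sqrt2$) produces precisely $\sum_{k\ge0}(-1)^k\zeta(\{3,1\}^k,3)v^{4k+3}=(F_+-F_-)G_+G_-$.

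The main obstacle is the evaluation of $Z^{\sh}(T)$, i.e.\ the closed form of the generating series of the shuffle-regularized values $\zeta^{\sh}(\{2\}^n,1)=Z^{\sh}(x(yx)^n)$. Everything else---the shuffle identity of Lemma~\ref{wordA}, the multiplicativity of $Z^{\sh}$, and the parity extraction---is mechanical, and the only genuinely delicate points are (i) reducing the non-admissible words $(\{2\}^n,1)$ to admissible ones via Proposition~\ref{zmZ} and invoking the known evaluation of $\zeta(\{2\}^a,3,\{2\}^b)$, and (ii) the bookkeeping of the factors of $\sqrt2$ forced by the odd lengths of the words $x(yx)^i$.
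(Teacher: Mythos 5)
Your proof is correct, but it takes a genuinely different route from the paper's. The paper disposes of this lemma in a few lines: it quotes the Bowman--Bradley evaluation $\zeta(\{3,1\}^n,3)=4^{-n}\sum_{i=0}^{n}(-1)^i\zeta(4i+3)\zeta(\{4\}^{n-i})$ from \cite[Theorem~1]{BB03}, after which the generating series is just the Cauchy product of $\sum_{n\ge0}4^{-n}\zeta(4n+3)u^{4n+3}=F_+-F_-$ and $\sum_{n\ge0}(-4)^{-n}\zeta(\{4\}^{n})u^{4n}=G_+G_-$ (Lemma~\ref{lem:zeta(4^n)_gen}). You never invoke Bowman--Bradley: instead you run the shuffle-algebra machinery---Lemma~\ref{wordA} in its odd case, multiplicativity of $Z^{\sh}$, Proposition~\ref{zmZ}, and the generating series of $\zeta_1(\{2\}^n)$---which is exactly the technique the paper reserves for the series that are \emph{not} available in the literature (compare the proofs of Lemmas~\ref{lem:zeta_1(1,3,1)_gen}, \ref{lem:zeta_2(1,3,1,3)_gen}, and \ref{lem:zeta_2(1,3,1)_gen}). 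Your route is longer but buys self-containedness: it amounts to an independent proof of the Bowman--Bradley identity above, and the even-$n$ half of Lemma~\ref{wordA} that you set aside yields the companion Lemma~\ref{lem:zeta_1(1,3,1,3)_gen} by the same argument. I checked your word identification $y(x^2y^2)^kx^2=(yx^2y)^kyx^2\mapsto(\{3,1\}^k,3)$, the evaluation $Z^{\sh}(T)=\sqrt2\,F_+(\sqrt2 u)G_-(\sqrt2 u)$, and the $\sqrt2$ bookkeeping; all are right. The bookkeeping can in fact be avoided entirely by weighting the two series with $(\pm2)^{-n}$ from the outset and using Lemma~\ref{lem:zeta_1(2^n)_gen} ($\sum_{n\ge0}(\pm2)^{-n}\zeta_1(\{2\}^n)u^{2n+1}=-2F_{\mp}G_{\pm}$) together with the parity-splitting factor $1-(-1)^{i+j}$, exactly as in the paper's proof of Lemma~\ref{lem:zeta_2(1,3,1)_gen}; this produces $(F_+-F_-)G_+G_-$ with no change of variable.

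One attribution, however, must be repaired. You justify the key input $\sum_{p=1}^{i}\zeta(\{2\}^{p-1},3,\{2\}^{i-p})=\sum_{m+j=i}(-1)^{m+1}\zeta(2m+1)\zeta(\{2\}^{j})$ by appeal to ``the classical evaluation of $\zeta(\{2\}^a,3,\{2\}^b)$'' in \cite{BBBL98}. The closed form for the \emph{individual} values $\zeta(\{2\}^a,3,\{2\}^b)$ is not proved there---it was only conjectural at that time and is a much later theorem of Zagier---so your argument should not be routed through it. Fortunately, what you actually need is only the sum over insertion positions, which is elementary: it follows from comparing the harmonic product $(1)*(\{2\}^i)$ with the index shuffle $(1)\sha(\{2\}^i)$ and using $\zeta^{*}(1)=0$, and it is precisely the paper's Lemma~\ref{lem:zeta_1(2^n)} (equivalently, in generating-series form, Lemma~\ref{lem:zeta_1(2^n)_gen}). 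Citing that lemma instead of \cite{BBBL98} closes this gap without any change to the structure of your proof.
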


\begin{proof}
 Recall that Bowman and Bradley \cite[Theorem~1]{BB03} showed that
 \[
  \zeta(\{3,1\}^n,3)=4^{-n}\sum_{i=0}^{n}(-1)^i\zeta(4i+3)\zeta(\{4\}^{n-i})
 \]
 for every nonnegative integer $n$.
 It follows that
 \begin{align*}
  \sum_{n\ge0}(-1)^n\zeta(\{3,1\}^n,3)u^{4n+3}
  &=\sum_{n\ge0}(-4)^{-n}\sum_{i=0}^{n}(-1)^i\zeta(4i+3)\zeta(\{4\}^{n-i})u^{4n+3}\\
  &=\biggl(\sum_{n_1\ge0}4^{-n_1}\zeta(4n_1+3)u^{4n_1+3}\biggr)\biggl(\sum_{n_2\ge0}(-4)^{-n_2}\zeta(\{4\}^{n_2})u^{4n_2}\biggr)\\
  &=(F_+-F_-)G_+G_-
 \end{align*}
 by Lemma~\ref{lem:zeta(4^n)_gen}.
\end{proof}

\begin{lem}\label{lem:zeta(1,3,1)_gen}
 We have
 \[
  \sum_{n\ge0}(-1)^n\zeta(\{1,3\}^n,1)u^{4n+1}=(F_++F_-)G_+G_-.
 \]
\end{lem}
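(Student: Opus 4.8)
The plan is to follow the strategy of Lemma~\ref{lem:zeta(3,1,3)_gen}, the only genuinely new difficulty being that the index $(\{1,3\}^n,1)$ is non-admissible, so a shuffle-regularized evaluation must replace the Bowman--Bradley formula. Extracting the coefficient of $u^{4n+1}$ on the right-hand side by means of $F_++F_-=2\sum_{k\ge0}4^{-k}\zeta(4k+1)u^{4k+1}$ and $G_+G_-=\sum_{j\ge0}(-4)^{-j}\zeta(\{4\}^j)u^{4j}$ (Lemma~\ref{lem:zeta(4^n)_gen}) shows that the assertion is equivalent to the closed form
\[
 \zeta^{\sh}(\{1,3\}^n,1)=2\cdot4^{-n}\sum_{i=0}^{n}(-1)^i\zeta(4i+1)\zeta(\{4\}^{n-i})
\]
(with the convention $\zeta(1)=\zeta^{\sh}(1)=0$), the exact analogue of the formula used for Lemma~\ref{lem:zeta(3,1,3)_gen} with $\zeta(4i+3)$ replaced by $\zeta(4i+1)$. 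Thus it suffices to prove this closed form; the passage back from it to the stated generating-series identity is then word-for-word as in Lemma~\ref{lem:zeta(3,1,3)_gen}.

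To prove the closed form I would first reduce it, via shuffle duality, to a product of two shuffle series. Since $\tau$ sends the word $(y^2x^2)^ny$ of $(\{1,3\}^n,1)$ to $x(y^2x^2)^n$, Proposition~\ref{duality} gives $\zeta^{\sh}(\{1,3\}^n,1)=Z^{\sh}(x(y^2x^2)^n)$. Now Lemma~\ref{wordA}, applied with its parameter equal to $2n$, rewrites
\[
 x(y^2x^2)^n=(-1)^n4^{-n}\sum_{i=0}^{2n}(-1)^i\,x(yx)^i\sh(yx)^{2n-i},
\]
and, because $Z^{\sh}$ is a homomorphism for $\sh$ and $Z^{\sh}((yx)^j)=\zeta(\{2\}^j)$, applying $Z^{\sh}$ yields the convolution
\[
 \zeta^{\sh}(\{1,3\}^n,1)=(-1)^n4^{-n}\sum_{i=0}^{2n}(-1)^iZ^{\sh}(x(yx)^i)\,\zeta(\{2\}^{2n-i}).
\]

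Multiplying by $(-1)^nu^{4n+1}$, summing over $n$, and splitting the resulting double sum according to the parity of $i$ (the constraint being $i+j\equiv0\pmod2$ with $j=2n-i$) expresses the left-hand side of the lemma as $\tfrac{u}{2}\bigl(A_-G_++A_+G_-\bigr)$, where $A_{\pm}(u)=\sum_{i\ge0}(\pm2)^{-i}Z^{\sh}(x(yx)^i)u^{2i}$ and $G_\pm$ are the series in the statement. It then remains to evaluate $A_{\pm}$. By Proposition~\ref{regshwd} with $m=1$ one has $Z^{\sh}(x(yx)^i)=-2\sum_{a+b=i-1}\zeta(\{2\}^a,3,\{2\}^b)$, so $A_{\pm}$ is governed entirely by the symmetric sums $\sum_{a+b=N}\zeta(\{2\}^a,3,\{2\}^b)$, whose evaluation in terms of $\zeta(2r+1)$ and $\zeta(\{2\}^s)$ is known; this is exactly what turns $A_{\pm}$ into products of an $F$-type and a $G$-type series, after which the identities of Lemma~\ref{lem:zeta(4^n)_gen} should collapse $\tfrac{u}{2}(A_-G_++A_+G_-)$ to $(F_++F_-)G_+G_-$.

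I expect this final evaluation-and-simplification to be the crux: both the input formula for $\zeta(\{2\}^a,3,\{2\}^b)$ and the subsequent generating-series bookkeeping (keeping the parity splitting and the various $(\pm2)^{-i}$ weights straight) are where errors would creep in, whereas the duality reduction and the Lemma~\ref{wordA} step are purely formal. As a check on the whole scheme, for $n=1$ the convolution reads $\zeta^{\sh}(1,3,1)=-\tfrac14\bigl(0+2\zeta(3)\zeta(2)+(-2\zeta(2)\zeta(3)+2\zeta(5))\bigr)=-\tfrac12\zeta(5)$, in agreement with $[u^5]\,(F_++F_-)G_+G_-$.
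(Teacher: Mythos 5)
Your proposal is correct, but it takes a genuinely different route from the paper. The paper's proof is essentially a citation: it quotes Bachmann--Charlton \cite[Proposition 4.2]{BC20} for the closed form $\zeta(\{1,3\}^n,1)=2^{-2n+1}\sum_{i=0}^{n}(-1)^i\zeta(4i+1)\zeta(\{4\}^{n-i})$ (exactly the identity you isolate as equivalent to the lemma) and then performs the two-line convolution with Lemma~\ref{lem:zeta(4^n)_gen}. You instead re-derive the generating series internally: duality (Proposition~\ref{duality}) gives $\zeta^{\sh}(\{1,3\}^n,1)=Z^{\sh}(x(y^2x^2)^n)$, Lemma~\ref{wordA} at even index $2n$ turns this into the alternating convolution $(-1)^n4^{-n}\sum_{i=0}^{2n}(-1)^iZ^{\sh}(x(yx)^i)\zeta(\{2\}^{2n-i})$, and the parity split yields $\tfrac{u}{2}(A_-G_++A_+G_-)$ with $A_\pm=\sum_i(\pm2)^{-i}Z^{\sh}(x(yx)^i)u^{2i}$. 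The one input you leave as ``known''---the evaluation of $\sum_{a+b=N}\zeta(\{2\}^a,3,\{2\}^b)$, i.e.\ of $\zeta_1(\{2\}^n)=-Z^{\sh}(x(yx)^n)$---is precisely the paper's Lemma~\ref{lem:zeta_1(2^n)} (proved independently via Lemma~\ref{lem:sha_alternating_sum}, so no circularity), and its generating-series form Lemma~\ref{lem:zeta_1(2^n)_gen} gives $A_\pm=2F_\mp G_\pm/u$; then indeed $\tfrac{u}{2}(A_-G_++A_+G_-)=F_+G_+G_-+F_-G_+G_-=(F_++F_-)G_+G_-$, so the scheme closes exactly as you predict (and your $n=1$ check is right). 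What each approach buys: the paper's proof is shorter but leans on an external reference, while yours is self-contained, uses the same word-identity machinery the paper deploys for Lemmas~\ref{lem:zeta_1(1,3,1)_gen}, \ref{lem:zeta_2(1,3,1,3)_gen}, and \ref{lem:zeta_2(1,3,1)_gen}, reproves the Bachmann--Charlton evaluation as a byproduct, and renders the paper's later Lemma~\ref{lem:zeta_1(1,3,1,3)_gen} immediate, since $Z^{\sh}(x(y^2x^2)^n)=-\zeta_1(\{1,3\}^n)$ is exactly your intermediate object.
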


\begin{proof}
 Recall that Bachmann and Charlton \cite[Proposition 4.2]{BC20} showed that
 \[
  \zeta(\{1,3\}^n,1)=2^{-2n+1}\sum_{i=0}^{n}(-1)^i\zeta(4i+1)\zeta(\{4\}^{n-i})
 \]
 for every nonnegative integer $n$ (note that the summand is equal to $0$ if $i=0$).
 It follows that
 \begin{align*}
  \sum_{n\ge0}(-1)^n\zeta(\{1,3\}^n,1)u^{4n+1}
  &=2\sum_{n\ge0}(-4)^{-n}\sum_{i=0}^{n}(-1)^i\zeta(4i+1)\zeta(\{4\}^{n-i})u^{4n+1}\\
  &=\biggl(2\sum_{n_1\ge0}4^{-n_1}\zeta(4n_1+1)u^{4n_1+1}\biggr)\biggl(\sum_{n_2\ge0}(-4)^{-n_2}\zeta(\{4\}^{n_2})u^{4n_2}\biggr)\\
  &=(F_++F_-)G_+G_-
 \end{align*}
 by Lemma~\ref{lem:zeta(4^n)_gen}.
\end{proof}

\begin{lem}\label{lem:zeta(3,1,3,1)_gen}
 We have
 \[
  \sum_{n\ge0}(-1)^n\zeta(\{3,1\}^n)u^{4n}=\frac{G_+^2+G_-^2}{2G_+G_-}-(F_+^2-F_-^2)G_+G_-.
 \]
\end{lem}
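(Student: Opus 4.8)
The plan is to clear the denominator $G_+G_-$ and reduce the statement to a single shuffle-convolution identity among regularized multiple zeta values, which can then be established with the word calculus developed above.

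First I would rename the building blocks supplied by the three preceding lemmas. Put $P=G_+G_-$, $Q=\tfrac12(G_+^2+G_-^2)$, $A=(F_+-F_-)G_+G_-$, and $B=(F_++F_-)G_+G_-$; by Lemmas~\ref{lem:zeta(4^n)_gen}, \ref{lem:zeta(3,1,3)_gen}, and~\ref{lem:zeta(1,3,1)_gen} these are
\[
 P=\sum_{n\ge0}(-1)^n\zeta(\{1,3\}^n)u^{4n},\qquad Q=\sum_{n\ge0}\zeta(\{4\}^n)u^{4n},
\]
\[
 A=\sum_{n\ge0}(-1)^n\zeta(\{3,1\}^n,3)u^{4n+3},\qquad B=\sum_{n\ge0}(-1)^n\zeta(\{1,3\}^n,1)u^{4n+1}.
\]
Writing $D=\sum_{n\ge0}(-1)^n\zeta(\{3,1\}^n)u^{4n}$ for the series we want, and observing that $(F_+^2-F_-^2)P^2=(F_+-F_-)(F_++F_-)(G_+G_-)^2=AB$, the asserted formula $D=Q/P-(F_+^2-F_-^2)P$ is equivalent, after multiplying by $P$, to $PD=Q-AB$. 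Comparing the coefficients of $u^{4n}$ and cancelling $(-1)^n$, this is exactly the convolution identity
\[
 \sum_{p+q=n}\zeta(\{1,3\}^p)\zeta(\{3,1\}^q)=(-1)^n\zeta(\{4\}^n)+\sum_{a+b=n-1}\zeta(\{3,1\}^a,3)\zeta(\{1,3\}^b,1)
\]
for every $n\ge0$ (the last sum being empty when $n=0$), which I shall call $(\star)$. Every index occurring here has no adjacent ones, so suppressing the regularization superscripts is legitimate by Proposition~\ref{astsh}.

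To prove $(\star)$ I would pass to words through the shuffle-multiplicative map $Z^{\sh}$. With $V_p=(y^2x^2)^p$ and $W_q=(yx^2y)^q$ we have $\zeta(\{1,3\}^p)=Z^{\sh}(V_p)$, $\zeta(\{3,1\}^q)=Z^{\sh}(W_q)$, $\zeta(\{4\}^n)=Z^{\sh}((yx^3)^n)$, $\zeta(\{3,1\}^a,3)=Z^{\sh}(W_a\,yx^2)$, and $\zeta(\{1,3\}^b,1)=Z^{\sh}(V_b\,y)$; since $Z^{\sh}(w_1\sh w_2)=Z^{\sh}(w_1)Z^{\sh}(w_2)$, the identity $(\star)$ follows from the word congruence
\[
 \sum_{p+q=n}V_p\sh W_q\equiv(-1)^n(yx^3)^n+\sum_{a+b=n-1}(W_a\,yx^2)\sh(V_b\,y)\pmod{\ker Z^{\sh}}.
\]
I would prove this by induction on $n$, stripping the leftmost letters with the shuffle recursion $uw\sh u'w'=u(w\sh u'w')+u'(uw\sh w')$ exactly as in the proofs of Lemmas~\ref{wordA}--\ref{word2}, and invoking the duality $Z^{\sh}(w)=Z^{\sh}(\tau(w))$ of Proposition~\ref{duality} to dispose of the non-admissible words (those ending in $y$, i.e.\ in a trailing $1$) and of the leading $x$'s that the recursion throws up, the latter through the mechanism of Proposition~\ref{regshwd}.

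The main obstacle is precisely this final inductive word computation. Unlike Lemmas~\ref{wordA}--\ref{word2}, whose right-hand sides are single monomials, here both sides are genuine shuffle convolutions, so the recursion will not collapse in one stroke; the delicate points are keeping the shuffle regularization of the trailing $1$'s under control and tracking the signs, together with checking that the surviving weight-$4n$ discrepancy genuinely lies in $\ker Z^{\sh}$ (already for $n=1$ the discrepancy is $yx^3-yxyx-y^2x^2$, which vanishes under $Z^{\sh}$ by the weight-four relation $\zeta(4)=\zeta(2,2)+\zeta(1,3)$). Should the direct induction become unwieldy, an alternative is to first extract the generating series of $\zeta(\{2\}^i,1)$ from Lemma~\ref{wordA}, whose even and odd parts reproduce $B$ and $A$, and then substitute it into Lemma~\ref{wordB}; this route, however, introduces the auxiliary families $\zeta(\{3,1\}^m,2)$ and $\zeta(\{1,3\}^m,1,1)$ that would have to be eliminated afterwards.
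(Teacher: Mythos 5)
Your reduction is correct as far as it goes: with $P=G_+G_-$, $Q=\tfrac12(G_+^2+G_-^2)$, $A=(F_+-F_-)G_+G_-$, $B=(F_++F_-)G_+G_-$, Lemmas~\ref{lem:zeta(4^n)_gen}, \ref{lem:zeta(3,1,3)_gen}, and~\ref{lem:zeta(1,3,1)_gen} identify these series, the sign bookkeeping in $PD=Q-AB$ checks out, and Proposition~\ref{astsh} justifies dropping the regularization superscripts; so the lemma is indeed equivalent to your convolution identity $(\star)$. The genuine gap is that $(\star)$ is never proved, and the method you propose for it cannot succeed. Your word congruence
\[
 \sum_{p+q=n}V_p\sh W_q\equiv(-1)^n(yx^3)^n+\sum_{a+b=n-1}(W_a\,yx^2)\sh(V_b\,y)\pmod{\ker Z^{\sh}}
\]
is \emph{not} a shuffle-algebra identity: as your own $n=1$ check shows, the two sides differ by $yx^3-yxyx-y^2x^2$, a nonzero element of $\mathfrak{H}^0$ which lies in $\ker Z^{\sh}$ only because of the arithmetic relation $\zeta(4)=\zeta(2,2)+\zeta(1,3)$. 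Since $\mathfrak{H}_{\sh}$ is a polynomial algebra over $\mathfrak{H}^0_{\sh}$, no nonzero element of $\mathfrak{H}^0$ is killed by $\reg_{\sh}$; hence any discrepancy of this kind can only be disposed of by a genuine relation in $\ker Z$, never by the shuffle recursion, Proposition~\ref{regshwd}, or Proposition~\ref{duality}. That is exactly why Lemmas~\ref{wordA}--\ref{word2} close (their conclusions are exact word identities) while your induction cannot: at every weight $4n$ it will leave a residue that must be killed by weight-$4n$ relations among MZVs, and you give no mechanism for producing them --- ``working modulo $\ker Z^{\sh}$'' is not an effective induction hypothesis, because $\ker Z$ has no known explicit description. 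This unproved step is precisely where the hard content of the lemma lives; the paper imports it wholesale by citing the evaluation of $\zeta(\{3,1\}^n)$ due to Bachmann and Charlton \cite[Proposition~4.2]{BC20} and then merely resums generating series against Lemma~\ref{lem:zeta(4^n)_gen}.

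It is worth noting that your $(\star)$ can be proved with tools from this paper, just not the shuffle-word ones: unwinding the definition of $\zeta_{\widehat{\mathcal{S}}}$ at $t^0$ and separating deconcatenations at even and odd positions shows that $(\star)$ is exactly the statement $\zeta_{\mathcal{S}_1}(\{3,1\}^n)=(-1)^n\zeta(\{4\}^n)$. This follows from the harmonic side of the theory: $\zeta^*$ is multiplicative for the harmonic product $*$ on all indices, so $\zeta_{\mathcal{S}_1}^*(\{3,1\}^n)=\zeta^*(I_0(\{3,1\}^n))$, and Lemma~\ref{lem:I_0(a,b,a,b)} with $(a,b)=(3,1)$ gives $I_0(\{3,1\}^n)=(-1)^n(\{4\}^n)$; Proposition~\ref{astsh} then removes the asterisks. (Lemma~\ref{lem:I_0(a,b,a,b)} is proved by a self-contained induction in the harmonic algebra, so using it here would not be circular even though it appears later in the paper, in the proof of Theorem~\ref{main0}.) Completed in that way, your argument would constitute a proof genuinely different from the paper's, avoiding \cite[Proposition~4.2]{BC20} altogether; as written, however, the proposal stops at the decisive step.
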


\begin{proof}
 Recall that Bachmann and Charlton \cite[Proposition 4.2]{BC20} showed that
 \begin{align*}
  \zeta(\{3,1\}^n)
  &=(-1)^n\sum_{i=0}^{n}4^{-i}\zeta^\star(\{4\}^i)\zeta(\{4\}^{n-i})\\
  &\hphantom{{}={}}+2^{-2n+3}\sum_{\substack{1\le i\le n-1\\0\le j\le n-i-1}}(-1)^{i+j}\zeta(4i+1)\zeta(4j+3)\zeta(\{4\}^{n-i-j-1})
 \end{align*}
 for every nonnegative integer $n$.
 It follows that
 \begin{align*}
  &\sum_{n\ge0}(-1)^n\zeta(\{3,1\}^n)u^{4n}\\
  &=\sum_{n\ge0}\sum_{i=0}^{n}4^{-i}\zeta^\star(\{4\}^i)\zeta(\{4\}^{n-i})u^{4n}\\
  &\hphantom{{}={}}-2\sum_{n\ge0}(-4)^{-(n-1)}\sum_{\substack{1\le i\le n-1\\0\le j\le n-i-1}}(-1)^{i+j}\zeta(4i+1)\zeta(4j+3)\zeta(\{4\}^{n-i-j-1})\\
  &=\biggl(\sum_{n_1\ge0}4^{-n_1}\zeta^{\star}(\{4\}^{n_1})u^{4n_1}\biggr)\biggl(\sum_{n_2\ge0}\zeta(\{4\}^{n_2})u^{4n_2}\biggr)\\
  &\hphantom{{}={}}-\biggl(2\sum_{n_1\ge0}4^{-n_1}\zeta(4n_1+1)u^{4n_1+1}\biggr)\biggl(\sum_{n_2\ge0}4^{-n_2}\zeta(4n_2+3)u^{4n_2+3}\biggr)\biggl(\sum_{n_3\ge0}(-4)^{-n_3}\zeta(\{4\}^{n_3})u^{4n_3}\biggr)\\
  &=G_+^{-1}G_-^{-1}\frac{G_+^2+G_-^2}{2}-(F_++F_-)(F_+-F_-)G_+G_-\qquad(\text{Lemma~\ref{lem:zeta(4^n)_gen}})\\
  &=\frac{G_+^2+G_-^2}{2G_+G_-}-(F_+^2-F_-^2)G_+G_-.\qedhere
 \end{align*}
\end{proof}

\begin{lem}\label{prop:2star_2star}
 We have
 \[
 \sum_{n\ge0}4^{-n}(4n+1)\zeta(4n+2)u^{4n+2}=\frac{G_-^{-2}-G_+^{-2}}{2}.
 \]
\end{lem}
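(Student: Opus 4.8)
The plan is to put the right-hand side into closed analytic form and then read off its coefficients. Starting from the evaluation $G_\pm^2=\sum_{n\ge0}\frac{(\pm1)^n2^{n+1}\pi^{2n}}{(2n+2)!}u^{2n}$ established in Lemma~\ref{lem:zeta(4^n)_gen}, I would recognize these series through the formal identity $\sum_{n\ge0}\frac{w^n}{(2n+2)!}=\frac{\cosh\sqrt{w}-1}{w}$ (which involves only even powers of $\sqrt{w}$ and is therefore a genuine power-series identity). Writing $\frac{2^{n+1}\pi^{2n}}{(2n+2)!}u^{2n}=2\frac{(2\pi^2u^2)^n}{(2n+2)!}$ and using $\cosh(i\theta)=\cos\theta$ for the alternating series, this gives
\[
 G_+^2=\frac{\cosh(\sqrt2\,\pi u)-1}{\pi^2u^2},\qquad G_-^2=\frac{1-\cos(\sqrt2\,\pi u)}{\pi^2u^2},
\]
both of which lie in $\mathbb{R}[[u]]$ with constant term $1$, so that $G_\pm^{-2}$ are well defined. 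Applying the double-angle formulas $\cosh(2\theta)-1=2\sinh^2\theta$ and $1-\cos(2\theta)=2\sin^2\theta$ with $\theta=\pi u/\sqrt2$ yields
\[
 G_+^{-2}=\frac{\pi^2u^2}{2\sinh^2(\pi u/\sqrt2)},\qquad G_-^{-2}=\frac{\pi^2u^2}{2\sin^2(\pi u/\sqrt2)},
\]
and hence $\dfrac{G_-^{-2}-G_+^{-2}}{2}=\dfrac{\pi^2u^2}{4}\Bigl(\dfrac{1}{\sin^2(\pi u/\sqrt2)}-\dfrac{1}{\sinh^2(\pi u/\sqrt2)}\Bigr)$.

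Next I would expand the two squared-reciprocal functions in terms of even zeta values. Differentiating the classical expansions $z\cot z=1-2\sum_{n\ge1}\zeta(2n)z^{2n}/\pi^{2n}$ and $z\coth z=1+2\sum_{n\ge1}(-1)^{n-1}\zeta(2n)z^{2n}/\pi^{2n}$ gives
\[
 \frac{1}{\sin^2 z}=\frac{1}{z^2}+2\sum_{n\ge1}(2n-1)\zeta(2n)\frac{z^{2n-2}}{\pi^{2n}},\qquad
 \frac{1}{\sinh^2 z}=\frac{1}{z^2}+2\sum_{n\ge1}(-1)^n(2n-1)\zeta(2n)\frac{z^{2n-2}}{\pi^{2n}}.
\]
Subtracting, the $1/z^2$ poles cancel and the factor $1-(-1)^n$ kills the even-$n$ terms while doubling the odd-$n$ ones, so only $n=2m+1$ survive, giving
\[
 \frac{1}{\sin^2 z}-\frac{1}{\sinh^2 z}=4\sum_{m\ge0}(4m+1)\zeta(4m+2)\frac{z^{4m}}{\pi^{4m+2}}.
\]

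Finally I would substitute $z=\pi u/\sqrt2$, so that $z^{4m}/\pi^{4m+2}=4^{-m}u^{4m}/\pi^2$, and multiply by $\pi^2u^2/4$; the powers of $\pi$ cancel and one obtains exactly $\sum_{m\ge0}4^{-m}(4m+1)\zeta(4m+2)u^{4m+2}$, which is the left-hand side. The point that needs care is the bookkeeping that keeps every step inside $\mathbb{R}[[u]]$: the auxiliary $\sqrt{\phantom{w}}$ and $\sqrt2$ enter only through even powers, and each $G_\pm^2$ has constant term $1$, so the inversions, the cancellation of poles in the difference of the two reciprocals, and the final substitution are all legitimate. The expansions of $1/\sin^2 z$ and $1/\sinh^2 z$ are the only external analytic input, and they follow at once from the cotangent and hyperbolic cotangent expansions, which are themselves equivalent to the evaluations $\zeta(\{2\}^n)=\pi^{2n}/(2n+1)!$ already used above.
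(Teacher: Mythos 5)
Your proof is correct, but it takes a genuinely different route from the paper's. The paper never inverts the explicit evaluation of $G_{\pm}^2$; instead it starts from the antipode identity $\sum_{n\ge0}(\pm2)^{-n}\zeta^{\star}(\{2\}^n)u^{2n}=G_{\mp}^{-1}$, squares the classical generating series $\sum_{n\ge0}\zeta^{\star}(\{2\}^n)v^{2n}=\pi v/\sin(\pi v)$, recognizes $(\pi v/\sin(\pi v))^2=-\pi v^{2}\frac{d}{dv}\cot(\pi v)=2\sum_{n\ge0}(2n-1)\zeta(2n)v^{2n}$ (with $\zeta(0)=-1/2$), and then performs the single formal substitution $v^2=\mp u^2/2$ to get $G_{\pm}^{-2}=2\sum_{n\ge0}(\mp2)^{-n}(2n-1)\zeta(2n)u^{2n}$ for both signs at once --- the hyperbolic case is absorbed into the sign of the substitution, so no $\sinh$ expansion ever appears. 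You bypass the zeta-star values and the antipode formula entirely: you invert the evaluation of $G_{\pm}^2$ from Lemma~\ref{lem:zeta(4^n)_gen}, identify the closed forms $G_{+}^{2}=2\sinh^{2}(\pi u/\sqrt{2})/(\pi^{2}u^{2})$ and $G_{-}^{2}=2\sin^{2}(\pi u/\sqrt{2})/(\pi^{2}u^{2})$, and quote the expansions of both $1/\sin^{2}z$ and $1/\sinh^{2}z$. The two computations are the same function in disguise (your $z=\pi u/\sqrt{2}$ is the paper's substitution), and both hinge on the differentiated cotangent series together with the parity cancellation that leaves only the terms with $n$ odd, i.e.\ the $\zeta(4m+2)$'s. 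What your version buys is independence from the harmonic-product apparatus: it needs only Lemma~\ref{lem:zeta(4^n)_gen} plus standard trigonometric expansions, and your bookkeeping keeping every step inside $\mathbb{R}[[u]]$ (even powers of $\sqrt{2}$, constant term $1$ before inverting, cancellation of the poles) is sound. What the paper's version buys is brevity and uniformity: one formal identity and one substitution $v^{2}=\mp u^{2}/2$ treat the trigonometric and hyperbolic cases simultaneously, at the cost of invoking the antipode formula for $\zeta^{\star}(\{2\}^n)$.
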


\begin{proof}
 Since
 \begin{align*}
  \bigg(\sum_{n\ge0}\zeta^{\star}(\{2\}^n)v^{2n}\bigg)^2
  &=\bigg(\frac{\pi v}{\sin(\pi v)}\bigg)^{2}
   =-\pi v^{2}\frac{d}{dv}\cot(\pi v)
   =2v^{2}\frac{d}{dv}\sum_{n\ge0} \zeta(2n)v^{2n-1}\\
  &=2\sum_{n\ge0} (2n-1)\zeta(2n)v^{2n}
 \end{align*}
 (here we understand $\zeta(0)=-1/2$),
 setting $v^2=\mp u^2/2$ we have
 \[
  G_{\pm}^{-2}=2\sum_{n\ge0}(\mp2)^{-n}(2n-1)\zeta(2n)u^{2n}.
 \]
 Then we get
 \[
  G_-^{-2}-G_+^{-2}=4\sum_{\substack{n\ge0\\n:\mathrm{odd}}}2^{-n}(2n-1)\zeta(2n)u^{2n}=2\sum_{n\ge0}4^{-n}(4n+1)\zeta(4n+2)u^{4n+2},
 \]
 as required.  
\end{proof}

\begin{lem}\label{lem:zeta(3,1,2)_gen}
 We have
 \[
  \sum_{n\ge0}(-1)^n\zeta(\{3,1\}^n,2)u^{4n+2}=\frac{G_+^2-G_-^2}{2G_+G_-}-(F_+-F_-)^2G_+G_-.
 \]
\end{lem}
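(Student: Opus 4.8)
The plan is to follow the template of Lemmas~\ref{lem:zeta(3,1,3)_gen}, \ref{lem:zeta(1,3,1)_gen}, and \ref{lem:zeta(3,1,3,1)_gen}: start from a closed evaluation of the (admissible) value $\zeta(\{3,1\}^n,2)$ as a polynomial in Riemann zeta values, and then recognise the resulting power series as a combination of products of $F_{\pm}$ and $G_{\pm}$. Concretely, I would first record an evaluation of the shape
\[
 \zeta(\{3,1\}^n,2)=(-1)^n\biggl(\sum_{\substack{i,l\ge0\\i+l=n}}4^{-i}(4i+1)\zeta(4i+2)(-4)^{-l}\zeta(\{4\}^l)-\sum_{\substack{i,j,l\ge0\\i+j+l=n-1}}4^{-i-j}(-4)^{-l}\zeta(4i+3)\zeta(4j+3)\zeta(\{4\}^l)\biggr),
\]
which is the natural analogue for $(\{3,1\}^n,2)$ of the Bachmann--Charlton evaluation quoted in Lemma~\ref{lem:zeta(3,1,3,1)_gen}: the ``odd'' double sum is now the square $\zeta(4i+3)\zeta(4j+3)$ rather than the mixed product, and the leading ``even'' term is governed by $\zeta(4i+2)$ rather than by $\zeta^{\star}(\{4\}^i)\zeta(\{4\}^{n-i})$. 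As a sanity check, at $n=1$ this reads $\zeta(3,1,2)=\zeta(3)^2-\tfrac{13}{16}\zeta(6)$ (numerically $\approx 0.618$), and at $n=0$ it reads $\zeta(2)$, as it must.

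Next I would multiply by $(-1)^n$ and sum against $u^{4n+2}$, turning each of the two sums into a product of generating series. The odd double sum is immediate: using the displayed identity $F_+-F_-=\sum_{n\ge0}4^{-n}\zeta(4n+3)u^{4n+3}$ together with $G_+G_-=\sum_{n\ge0}(-4)^{-n}\zeta(\{4\}^n)u^{4n}$ from Lemma~\ref{lem:zeta(4^n)_gen}, its generating series is exactly $(F_+-F_-)^2G_+G_-$ (the exponent bookkeeping $u^{4i+3}\cdot u^{4j+3}\cdot u^{4l}=u^{4(i+j+l)+6}=u^{4n+2}$ matches the constraint $i+j+l=n-1$). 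For the even sum I would factor it as
\[
 \biggl(\sum_{i\ge0}4^{-i}(4i+1)\zeta(4i+2)u^{4i+2}\biggr)\biggl(\sum_{l\ge0}(-4)^{-l}\zeta(\{4\}^l)u^{4l}\biggr),
\]
and this is the one place a previous result is genuinely needed: by Lemma~\ref{prop:2star_2star} the first factor equals $(G_-^{-2}-G_+^{-2})/2$, while the second factor is $G_+G_-$ by Lemma~\ref{lem:zeta(4^n)_gen}. The elementary identity $(G_-^{-2}-G_+^{-2})G_+G_-=(G_+^2-G_-^2)/(G_+G_-)$ then collapses the even part to $(G_+^2-G_-^2)/(2G_+G_-)$. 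Subtracting the two contributions yields the claimed formula.

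The routine part is the generating-series bookkeeping and the two elementary factorisations. The main obstacle I anticipate is securing the closed evaluation of $\zeta(\{3,1\}^n,2)$ in the precise form above, and in particular getting its even part expressed through $(4i+1)\zeta(4i+2)$: this is exactly the combination that Lemma~\ref{prop:2star_2star} was tailored to absorb into $G_{\pm}^{-2}$, so I expect that lemma to be the pivot of the argument. If the evaluation is not available to cite verbatim, I would derive it by the same route that underlies the Bachmann--Charlton formula used in Lemma~\ref{lem:zeta(3,1,3,1)_gen} --- namely regularised double shuffle together with the antipode formula recorded in the preceding subsection --- keeping careful track of the signs $(-4)^{-l}$ against $4^{-i}$, which is where an error is most likely to creep in.
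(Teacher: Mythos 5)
Your proposal is correct and follows essentially the same route as the paper: the closed evaluation you posit is exactly Bowman--Bradley's theorem (cited as \cite[Theorem~2]{BB03} and quoted verbatim in the paper's proof), merely re-indexed via $l=n-i$ and a shift of $j$, so the obstacle you anticipate about ``securing the evaluation'' disappears. The remaining steps---factoring the two sums into products of generating series and invoking Lemmas~\ref{lem:zeta(4^n)_gen} and \ref{prop:2star_2star}---coincide with the paper's argument.
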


\begin{proof}
 Recall that Bowman and Bradley \cite[Theorem~2]{BB03} showed that
 \[
  \zeta(\{3,1\}^n,2)=4^{-n}\sum_{i=0}^{n}(-1)^i\zeta(\{4\}^{n-i})\biggl((4i+1)\zeta(4i+2)-4\sum_{j=1}^i\zeta(4j-1)\zeta(4i-4j+3)\biggr)
 \]
 for every nonnegative integer $n$.
 It follows that
 \begin{align*}
  &\sum_{n\ge0}(-1)^n\zeta(\{3,1\}^n,2)u^{4n+2}\\
  &=\sum_{n\ge0}(-4)^{-n}\sum_{i=0}^{n}(-1)^i\zeta(\{4\}^{n-i})(4i+1)\zeta(4i+2)u^{4n+2}\\
  &\phantom{{}={}}-4\sum_{n\ge0}(-4)^{-n}\sum_{i=0}^{n}(-1)^i\zeta(\{4\}^{n-i})\sum_{j=1}^i\zeta(4j-1)\zeta(4i-4j+3)u^{4n+2}\\
  &=\biggl(\sum_{n_1\ge0}(-4)^{-n_1}\zeta(\{4\}^{n_1})u^{4n_1}\biggr)\biggl(\sum_{n_2\ge0}4^{-n_2}(4n_2+1)\zeta(4n_2+2)u^{4n_2+2}\biggr)\\
  &\phantom{{}={}}-\biggl(\sum_{n_1\ge0}(-4)^{-n_1}\zeta(\{4\}^{n_1})u^{4n_1}\biggr)\biggl(\sum_{n_2\ge0}4^{-n_2}\zeta(4n_2+3)u^{4n_2+3}\biggr)\biggl(\sum_{n_3\ge0}4^{-n_3}\zeta(4n_3+3)u^{4n_3+3}\biggr)\\
  &=G_+G_-\cdot\frac{G_-^{-2}-G_+^{-2}}{2}-G_+G_-(F_+-F_-)^2\qquad(\text{Lemmas \ref{lem:zeta(4^n)_gen} and \ref{prop:2star_2star}})\\
  &=\frac{G_+^2-G_-^2}{2G_+G_-}-(F_+-F_-)^2G_+G_-.\qedhere
 \end{align*}
\end{proof}

\begin{lem}\label{lem:zeta_1(1,3,1,3)_gen}
 We have
 \[
  \sum_{n\ge0}(-1)^n\zeta_1(\{1,3\}^n)u^{4n+1}=-(F_++F_-)G_+G_-.
 \]
\end{lem}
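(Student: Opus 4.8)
The plan is to reduce the left-hand side to the generating series already evaluated in Lemma~\ref{lem:zeta(1,3,1)_gen}, using the algebraic machinery of the previous subsections. First I would observe that $(\{1,3\}^n)$ has no adjacent ones, and that the same holds for every index $(k_1+l_1,\dots,k_{2n}+l_{2n})$ occurring in the definition of $\zeta_1(\{1,3\}^n)$, since every even-position entry is at least $3$. Hence each summand of $\zeta_1^{\ast}(\{1,3\}^n)$ agrees with the corresponding summand of $\zeta_1^{\sh}(\{1,3\}^n)$ by Proposition~\ref{astsh}, so $\zeta_1^{\ast}=\zeta_1^{\sh}$ and we may write $\zeta_1$ unambiguously. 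The word attached to $(\{1,3\}^n)$ is $z_1z_3\cdots z_1z_3=(y^2x^2)^n$, so Proposition~\ref{zmZ} with $m=1$ gives
\[
 \zeta_1(\{1,3\}^n)=-Z^{\sh}\bigl(x(y^2x^2)^n\bigr).
\]

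Next I would apply the duality of Proposition~\ref{duality}. A direct computation of the anti-automorphism $\tau$ (reverse the word and interchange $x$ and $y$) shows that $\tau\bigl(x(y^2x^2)^n\bigr)=(y^2x^2)^n y$, the key point being the self-duality $\tau(y^2x^2)=y^2x^2$ of the basic block together with $\tau(x)=y$. Consequently,
\[
 Z^{\sh}\bigl(x(y^2x^2)^n\bigr)=Z^{\sh}\bigl((y^2x^2)^n y\bigr).
\]

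Finally I would identify the word $(y^2x^2)^n y=z_1z_3\cdots z_1z_3\,z_1$ as the one attached to the (non-admissible) index $(\{1,3\}^n,1)$, so that $Z^{\sh}\bigl((y^2x^2)^n y\bigr)$ is exactly the shuffle-regularized value $\zeta(\{1,3\}^n,1)$ appearing in Lemma~\ref{lem:zeta(1,3,1)_gen}. Combining the three displays yields $\zeta_1(\{1,3\}^n)=-\zeta(\{1,3\}^n,1)$, and therefore
\[
 \sum_{n\ge0}(-1)^n\zeta_1(\{1,3\}^n)u^{4n+1}=-\sum_{n\ge0}(-1)^n\zeta(\{1,3\}^n,1)u^{4n+1}=-(F_++F_-)G_+G_-
\]
by Lemma~\ref{lem:zeta(1,3,1)_gen}, as desired.

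The computation is short, so there is no serious obstacle; the two points that need care are the bookkeeping justifying the omission of the $\ast$/$\sh$ superscript (so that Proposition~\ref{zmZ}, stated for $Z^{\sh}$, indeed computes $\zeta_1$) and the check that the trailing $z_1=y$ produced by duality is consistent with the regularization convention under which Lemma~\ref{lem:zeta(1,3,1)_gen} was proved. I expect getting the $\tau$-image right to be the only genuinely calculational step, and it becomes immediate once the self-duality of the block $y^2x^2$ is noted.
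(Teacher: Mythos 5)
Your proof is correct and takes essentially the same route as the paper's: Proposition~\ref{zmZ} with $m=1$, then Proposition~\ref{duality} via $\tau\bigl(x(y^2x^2)^n\bigr)=(y^2x^2)^ny$ to get $\zeta_1(\{1,3\}^n)=-\zeta(\{1,3\}^n,1)$, and finally Lemma~\ref{lem:zeta(1,3,1)_gen}. The additional bookkeeping you supply (the $\ast$/$\sh$ identification and the explicit computation of $\tau$) is accurate but is left implicit in the paper's one-line argument.
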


\begin{proof}
 Since Propositions \ref{zmZ} and \ref{duality} show that
 \[
  \zeta_1(\{1,3\}^n)=-Z^{\sh}(x(y^2x^2)^n)=-Z^{\sh}((y^2x^2)^ny)=-\zeta(\{1,3\}^n,1)
 \]
 for every nonnegative integer $n$, the lemma follows from Lemma~\ref{lem:zeta(1,3,1)_gen}.
\end{proof}

Let $\sha$ denote the shuffle of indices, e.g., $(a,b)\sha (c)=(a,b,c)+(a,c,b)+(c,a,b)$.

\begin{lem}\label{lem:sha_alternating_sum}
 If $a$ and $b$ are positive integers, then we have
 \[
  \sum_{i=0}^{n}(-1)^i(ai+b)*(\{a\}^{n-i})=(b)\sha(\{a\}^n)
 \]
 for every nonnegative integer $n$.
\end{lem}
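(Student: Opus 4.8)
The plan is to reduce the identity to a single elementary fact about a depth-one index against a string of equal entries, and then to collapse the alternating sum by telescoping. Writing $(c)$ for the depth-one index whose entry is a positive integer $c$, the crux is the identity
\[
 (c)\ast(\{a\}^m)=(c)\sha(\{a\}^m)+(a+c)\sha(\{a\}^{m-1})
\]
valid for every $m\ge1$, together with the trivial case $(c)\ast\emptyset=(c)=(c)\sha\emptyset$. Granting this, substituting $c=ai+b$ (so that $a+c=a(i+1)+b$) gives
\[
 (ai+b)\ast(\{a\}^{n-i})=(ai+b)\sha(\{a\}^{n-i})+\bigl(a(i+1)+b\bigr)\sha(\{a\}^{n-i-1})
\]
for $0\le i\le n$, where the last term is understood to vanish when $i=n$.

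I would prove the crux by invoking the explicit description of the quasi-shuffle of a single letter: for any index $(a_1,\dots,a_m)$,
\[
 (c)\ast(a_1,\dots,a_m)=\sum_{j=0}^{m}(a_1,\dots,a_j,c,a_{j+1},\dots,a_m)+\sum_{j=1}^{m}(a_1,\dots,a_{j-1},a_j+c,a_{j+1},\dots,a_m),
\]
which follows by a routine induction from the defining recursion of $\ast$. Specializing to $a_1=\dots=a_m=a$, the first sum inserts $c$ into each of the $m+1$ gaps of $\{a\}^m$ and is therefore exactly $(c)\sha(\{a\}^m)$, while the second sum places the single entry $a+c$ into each of the $m$ slots of $\{a\}^{m-1}$ and is exactly $(a+c)\sha(\{a\}^{m-1})$. (Alternatively, one can prove the crux directly by induction on $m$, peeling the last entry via the harmonic recursion on the left-hand side and via the reversal-symmetric form of the shuffle recursion on the right-hand side, the latter being legitimate since $\sha$ commutes with reversal and $\{a\}^m$ is a palindrome.)

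Finally I would feed the crux into the alternating sum. Splitting according to the two terms gives
\[
 \sum_{i=0}^{n}(-1)^i(ai+b)\ast(\{a\}^{n-i})=\sum_{i=0}^{n}(-1)^i(ai+b)\sha(\{a\}^{n-i})+\sum_{i=0}^{n-1}(-1)^i\bigl(a(i+1)+b\bigr)\sha(\{a\}^{n-i-1}),
\]
and relabelling $i+1$ as $i$ turns the second sum into $-\sum_{i=1}^{n}(-1)^i(ai+b)\sha(\{a\}^{n-i})$. This cancels every term of the first sum except the one with $i=0$, and what remains is $(a\cdot0+b)\sha(\{a\}^n)=(b)\sha(\{a\}^n)$, as required.

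I expect the only real subtlety to be the bookkeeping behind the crux, namely pinning down the explicit single-letter quasi-shuffle formula (equivalently, justifying the right-hand shuffle recursion through reversal symmetry) and checking the small values of $m$ where the recursions degenerate; once the crux is in hand, the remaining telescoping is entirely routine.
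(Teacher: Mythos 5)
Your proposal is correct and follows essentially the same route as the paper: the paper's proof rests on exactly your ``crux'' identity $(ai+b)\ast(\{a\}^{n-i})=(ai+b)\sha(\{a\}^{n-i})+(a(i+1)+b)\sha(\{a\}^{n-i-1})$ (stated implicitly as the first equality in its displayed computation, with the $i=n$ term kept separate), followed by the same telescoping cancellation. The only difference is that you spell out the single-letter quasi-shuffle expansion justifying the crux, which the paper treats as evident.
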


\begin{proof}
 We have
 \begin{align*}
  &\sum_{i=0}^{n}(-1)^i(ai+b)*(\{a\}^{n-i})\\
  &=\sum_{i=0}^{n-1}(-1)^i((ai+b)\sha(\{a\}^{n-i})+(a(i+1)+b)\sha(\{a\}^{n-i-1}))+(-1)^n(an+b)\\
  &=(b)\sha(\{a\}^n),
 \end{align*}
 as required.
\end{proof}

\begin{lem}\label{lem:zeta_1(2^n)}
 We have
 \[
  \zeta_1(\{2\}^n)=-2 \sum_{i=0}^{n} (-1)^{i} \zeta(2i+1) \zeta(\{2\}^{n-i})
 \]
 for every nonnegative integer $n$.
\end{lem}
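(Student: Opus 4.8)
The plan is to evaluate both sides through the quasi-shuffle (harmonic) product and then invoke the just-proved Lemma~\ref{lem:sha_alternating_sum}. First I would expand $\zeta_1(\{2\}^n)$ directly from the defining binomial sum. In
$\zeta_1(\{2\}^n)=\sum_{l_1+\dots+l_n=1}\zeta(2+l_1,\dots,2+l_n)\prod_{j}\binom{2+l_j-1}{l_j}$
exactly one index $l_j$ equals $1$ and all others vanish; the corresponding binomial factor is $\binom{2}{1}=2$ and the argument becomes $(\{2\}^{j-1},3,\{2\}^{n-j})$. Hence
\[
 \zeta_1(\{2\}^n)=2\sum_{j=0}^{n-1}\zeta(\{2\}^j,3,\{2\}^{n-1-j}),
\]
where every index is admissible, so there is no ambiguity between $\ast$ and $\sh$.

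Next I would compute the harmonic product $(\{2\}^n)\ast(1)$ in $\mathcal{I}$. Interleaving the single component $1$ with the $n$ copies of $2$ produces two kinds of terms: the $n+1$ \emph{insertion} terms in which $1$ occupies a gap, and the $n$ \emph{merge} terms in which $1$ is absorbed into a neighbouring $2$ to produce a $3$. Explicitly,
\[
 (\{2\}^n)\ast(1)=(1)\sha(\{2\}^n)+\sum_{j=0}^{n-1}(\{2\}^j,3,\{2\}^{n-1-j}),
\]
the insertion terms being precisely the shuffle $(1)\sha(\{2\}^n)=\sum_{j=0}^{n}(\{2\}^j,1,\{2\}^{n-j})$.

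Now I would apply the harmonic-regularized evaluation $\zeta^\ast$, which is an algebra homomorphism from $(\mathcal{I},\ast)$ to $\mathbb{R}$ with $\zeta^\ast(1)=0$ (see \cite{IKZ06}). On the left-hand side this gives $\zeta^\ast(\{2\}^n)\zeta^\ast(1)=0$, while the merge sum evaluates to $\frac12\zeta_1(\{2\}^n)$ by the first display; therefore $\zeta_1(\{2\}^n)=-2\,\zeta^\ast\bigl((1)\sha(\{2\}^n)\bigr)$. Finally, Lemma~\ref{lem:sha_alternating_sum} with $a=2$ and $b=1$ rewrites $(1)\sha(\{2\}^n)=\sum_{i=0}^{n}(-1)^i(2i+1)\ast(\{2\}^{n-i})$, and applying the homomorphism $\zeta^\ast$ once more—with $\zeta(1)=0$ absorbing the $i=0$ term—yields
\[
 \zeta^\ast\bigl((1)\sha(\{2\}^n)\bigr)=\sum_{i=0}^{n}(-1)^i\zeta(2i+1)\zeta(\{2\}^{n-i}),
\]
which is the desired identity.

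The main obstacle is the bookkeeping of the harmonic product $(\{2\}^n)\ast(1)$: one must recognize that its ``merge'' part reproduces exactly $\frac12\zeta_1(\{2\}^n)$ while its ``insertion'' part is the shuffle handled by Lemma~\ref{lem:sha_alternating_sum}. Once this correspondence is spotted, the remainder is just the multiplicativity of $\zeta^\ast$ and a clean appeal to the preceding lemma.
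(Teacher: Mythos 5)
Your proof is correct and takes essentially the same route as the paper's: both expand $\zeta_1(\{2\}^n)$ as $2\sum_{j}\zeta(\{2\}^j,3,\{2\}^{n-1-j})$, recognize this sum as the merge part of the harmonic product $(1)*(\{2\}^n)$, whose regularized value vanishes because $\zeta^*(1)=0$, leaving $-2\zeta^*\bigl((1)\sha(\{2\}^n)\bigr)$, and then evaluate that shuffle via Lemma~\ref{lem:sha_alternating_sum} with $a=2$, $b=1$. There are no gaps; your write-up simply spells out the merge/insertion bookkeeping that the paper compresses into the single identity $\zeta_1(\{2\}^n)=2\zeta\bigl((1)*(\{2\}^n)-(1)\sha(\{2\}^n)\bigr)$.
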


\begin{proof}
 We have
 \begin{align*}
  \zeta_1(\{2\}^n)
  &=\binom{2+1-1}{1}\sum_{i=1}^{n}\zeta(\{2\}^{i-1},3,\{2\}^{n-i})\\
  &=2\zeta((1)*(\{2\}^n)-(1)\sha(\{2\}^n))\\
  &=-2\zeta((1)\sha(\{2\}^n))\\
  &=-2\sum_{i=0}^{n}(-1)^i\zeta(2i+1)\zeta(\{2\}^{n-i})
 \end{align*}
 by Lemma~\ref{lem:sha_alternating_sum} with $a=2$ and $b=1$.
\end{proof}

\begin{lem}\label{lem:zeta_1(2^n)_gen}
 We have
 \[ 
  \sum_{n\ge0}(\pm2)^{-n}\zeta_1(\{2\}^n)u^{2n+1}=-2F_{\mp}G_{\pm}.
 \]
\end{lem}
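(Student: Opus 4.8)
The plan is to feed the closed form from Lemma~\ref{lem:zeta_1(2^n)} into the generating series and then recognize the resulting Cauchy product as $-2F_{\mp}G_{\pm}$. First I would substitute
\[
 \zeta_1(\{2\}^n)=-2\sum_{i=0}^{n}(-1)^i\zeta(2i+1)\zeta(\{2\}^{n-i})
\]
into the left-hand side, so that
\[
 \sum_{n\ge0}(\pm2)^{-n}\zeta_1(\{2\}^n)u^{2n+1}
 =-2\sum_{n\ge0}\sum_{i=0}^{n}(\pm2)^{-n}(-1)^i\zeta(2i+1)\zeta(\{2\}^{n-i})u^{2n+1}.
\]

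Next I would split the data along $i$ and $j:=n-i$. Writing $(\pm2)^{-n}=(\pm2)^{-i}(\pm2)^{-j}$ and $u^{2n+1}=u^{2i+1}u^{2j}$, the key algebraic point is the sign merge
\[
 (-1)^i(\pm2)^{-i}=(\mp2)^{-i},
\]
which absorbs the factor $(-1)^i$ into the base of the power, turning the $i$-indexed factor into a $\mp$-series while leaving the $j$-indexed factor as a $\pm$-series. Concretely, the double sum becomes a Cauchy product
\[
 \biggl(\sum_{i\ge0}(\mp2)^{-i}\zeta(2i+1)u^{2i+1}\biggr)\biggl(\sum_{j\ge0}(\pm2)^{-j}\zeta(\{2\}^j)u^{2j}\biggr)
 =F_{\mp}G_{\pm},
\]
by the defining series of $F_{\mp}$ and $G_{\pm}$. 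Combining with the overall factor $-2$ then gives the claimed identity.

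The calculation will be entirely formal, so there is no analytic obstacle; the only point requiring care will be the bookkeeping of signs, namely verifying that the factor $(-1)^i$ flips the sign of $2$ in exactly the $F$-factor (sending $F_{\pm}$ to $F_{\mp}$) and not in the $G$-factor. Everything else is a routine rearrangement of a (formally convergent) power-series product.
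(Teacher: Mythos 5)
Your proof is correct and follows the paper's argument exactly: substitute the closed form of $\zeta_1(\{2\}^n)$ from Lemma~\ref{lem:zeta_1(2^n)}, absorb the sign via $(-1)^i(\pm2)^{-i}=(\mp2)^{-i}$, and recognize the Cauchy product as $F_{\mp}G_{\pm}$. Nothing is missing.
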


\begin{proof}
 Lemma~\ref{lem:zeta_1(2^n)} implies that
 \begin{align*}
  &\sum_{n\ge0}(\pm2)^{-n}\zeta_1(\{2\}^n)u^{2n+1}\\
  &=-2\sum_{n\ge0}(\pm2)^{-n}\sum_{i=0}^{n}(-1)^i\zeta(2i+1)\zeta(\{2\}^{n-i})u^{2n+1}\\
  &=-2\biggl(\sum_{n_1\ge0}(\mp2)^{-n_1}\zeta(2n_1+1)u^{2n_1+1}\biggr)\biggl(\sum_{n_2\ge0}(\pm2)^{-n_2}\zeta(\{2\}^{n_2})u^{2n_2}\biggr)\\
  &=-2F_{\mp}G_{\pm}.\qedhere
 \end{align*}
\end{proof}

\begin{lem}\label{lem:zeta_1(1,3,1)_gen}
 We have
 \[
  \sum_{n\ge0}(-1)^n \zeta_1(\{1,3\}^n,1)u^{4n+2}=\frac{G_+^2-G_-^2}{2G_+G_-}-(F_++F_-)^2G_+G_-.
 \]
\end{lem}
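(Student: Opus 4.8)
The plan is to reduce everything to the single word $x(y^2x^2)^ny$ and then feed it into Lemma~\ref{wordB}. Since the index $(\{1,3\}^n,1)$ corresponds to the word $(y^2x^2)^ny$, Proposition~\ref{zmZ} (with $m=1$) gives
\[
 \zeta_1(\{1,3\}^n,1)=-Z^{\sh}(x(y^2x^2)^ny),
\]
so the whole task is to evaluate $Z^{\sh}(x(y^2x^2)^ny)$ and then repackage the result as a generating series in $u$. Note that, unlike the situation in Lemma~\ref{lem:zeta_1(1,3,1,3)_gen}, the word $x(y^2x^2)^ny$ is fixed by $\tau$, so duality alone does not collapse it to an ordinary MZV and genuine shuffle work is needed.

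First I would apply Lemma~\ref{wordB} with its index taken to be $2n$; its even case reads
\[
 \sum_{i=0}^{2n}(-1)^i x(yx)^i\sh(yx)^{2n-i}y=(-1)^n2^{2n}\bigl(x(y^2x^2)^ny+(yx^2y)^nyx\bigr).
\]
Solving for $x(y^2x^2)^ny$ and applying $Z^{\sh}$, which is multiplicative for $\sh$, reduces the computation to three elementary evaluations: $Z^{\sh}(x(yx)^i)=-\zeta_1(\{2\}^i)$ by Proposition~\ref{zmZ}; $Z^{\sh}((yx)^{2n-i}y)=-\zeta_1(\{2\}^{2n-i})$, since $\tau((yx)^{2n-i}y)=x(yx)^{2n-i}$ so Proposition~\ref{duality} reduces it to the previous case; and $Z^{\sh}((yx^2y)^nyx)=\zeta(\{3,1\}^n,2)$, the word $(yx^2y)^nyx$ being admissible and corresponding to the index $(\{3,1\}^n,2)$. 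Assembling these yields the closed form
\[
 \zeta_1(\{1,3\}^n,1)=\zeta(\{3,1\}^n,2)-(-1)^n2^{-2n}\sum_{i=0}^{2n}(-1)^i\zeta_1(\{2\}^i)\zeta_1(\{2\}^{2n-i}).
\]

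Finally I would multiply by $(-1)^nu^{4n+2}$ and sum over $n$. The $\zeta(\{3,1\}^n,2)$ contribution is precisely Lemma~\ref{lem:zeta(3,1,2)_gen}, producing $\dfrac{G_+^2-G_-^2}{2G_+G_-}-(F_+-F_-)^2G_+G_-$. The convolution contribution is $-\sum_n2^{-2n}\bigl(\sum_{i=0}^{2n}(-1)^i\zeta_1(\{2\}^i)\zeta_1(\{2\}^{2n-i})\bigr)u^{4n+2}$, which by Lemma~\ref{lem:zeta_1(2^n)_gen} I would identify with $-(-2F_+G_-)(-2F_-G_+)=-4F_+F_-G_+G_-$. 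The point that makes this identification work, and the step I expect to be the \emph{main obstacle}, is that the product $(-2F_+G_-)(-2F_-G_+)$ a priori also carries the odd-length convolutions $\sum_{i+j=2m+1}$, whose degrees in $u$ are $\equiv0\pmod4$; these must be shown to vanish so that the product equals exactly the even-constrained sum we need. They do vanish by an antisymmetry argument: pairing $(i,j)$ with $(j,i)$ and using $(-1)^i+(-1)^j=0$ whenever $i+j$ is odd kills every such term, so $4F_+F_-G_+G_-$ has only powers $u^{\equiv2\bmod4}$ and coincides with the convolution sum. Combining the two contributions and using $(F_+-F_-)^2+4F_+F_-=(F_++F_-)^2$ then gives $\dfrac{G_+^2-G_-^2}{2G_+G_-}-(F_++F_-)^2G_+G_-$, as required.
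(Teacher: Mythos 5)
Your proof is correct and takes essentially the same route as the paper's: both rest on the even case of Lemma~\ref{wordB} combined with Propositions~\ref{zmZ} and \ref{duality}, then convert to generating series via Lemmas~\ref{lem:zeta_1(2^n)_gen} and \ref{lem:zeta(3,1,2)_gen}, finishing with $(F_+-F_-)^2+4F_+F_-=(F_++F_-)^2$. The only cosmetic difference is how the odd-length convolutions are killed: the paper uses the odd case of Lemma~\ref{wordB} together with duality, while you use the equally valid antisymmetry pairing $(i,j)\leftrightarrow(j,i)$.
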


\begin{proof}
 Propositions \ref{zmZ} and \ref{duality} and Lemma~\ref{wordB} show that
 \begin{align*}
  &\sum_{i=0}^{n}(-1)^i\zeta_1(\{2\}^i)\zeta_1(\{2\}^{n-i})\\
  &=\sum_{i=0}^{n}(-1)^iZ^{\sh}(x(yx)^i)Z^{\sh}(x(yx)^{n-i})\\
  &=\sum_{i=0}^{n}(-1)^iZ^{\sh}(x(yx)^i)Z^{\sh}((yx)^{n-i}y)\\
  &=
  \begin{cases}
   (-1)^{n/2}2^n(Z^{\sh}(x(y^2x^2)^{n/2}y)+Z^{\sh}((yx^2y)^{n/2}yx))&\text{if $n$ is even,}\\
   (-1)^{(n-1)/2}2^n(-Z^{\sh}(xy^2(x^2y^2)^{(n-1)/2}x)+Z^{\sh}(y(x^2y^2)^{(n-1)/2}x^2y))&\text{if $n$ is odd}
  \end{cases}\\
  &=
  \begin{cases}
   (-1)^{n/2}2^n(-\zeta_1(\{1,3\}^{n/2},1)+\zeta(\{3,1\}^{n/2},2))&\text{if $n$ is even,}\\
   0&\text{if $n$ is odd.}
  \end{cases}
 \end{align*}
 It follows that
 \begin{align*}
  &\sum_{n\ge0}(-1)^n(-\zeta_1(\{1,3\}^n,1)+\zeta(\{3,1\}^n,2))u^{4n+2}\\
  &=\sum_{\substack{n\ge0\\n:\mathrm{even}}}(-1)^{n/2}(-\zeta_1(\{1,3\}^{n/2},1)+\zeta(\{3,1\}^{n/2},2))u^{2n+2}\\
  &=\sum_{n\ge0}2^{-n}\sum_{i=0}^{n}(-1)^i\zeta_1(\{2\}^i)\zeta_1(\{2\}^{n-i})u^{2n+2}\\
  &=\biggl(\sum_{n_1\ge0}(-2)^{-n_1}\zeta_1(\{2\}^{n_1})u^{2n_1+1}\biggr)\biggl(\sum_{n_2\ge0}2^{-n_2}\zeta_1(\{2\}^{n_2})u^{2n_2+1}\biggr)\\
  &=(-2F_+G_-)(-2F_-G_+)\qquad(\text{Lemma~\ref{lem:zeta_1(2^n)_gen}})\\
  &=4F_+F_-G_+G_-.
 \end{align*}
 Using Lemma~\ref{lem:zeta(3,1,2)_gen}, we infer that
 \begin{align*}
  &\sum_{n\ge0}(-1)^n \zeta_1(\{1,3\}^n,1)u^{4n+2}\\
  &=\sum_{n\ge0}(-1)^n\zeta(\{3,1\}^n,2)u^{4n+2}-4F_+F_-G_+G_-\\
  &=\frac{G_+^2-G_-^2}{2G_+G_-}-(F_+-F_-)^2G_+G_--4F_+F_-G_+G_-\\
  &=\frac{G_+^2-G_-^2}{2G_+G_-}-(F_++F_-)^2G_+G_-.\qedhere
 \end{align*}
\end{proof}

\begin{lem}\label{lem:Z(xyxy)_gen}
 We have
 \[
  \sum_{n\ge0}(\pm2)^{-n}Z^{\sh}((xy)^n)u^{2n}=G_{\pm}^{-1}\pm2F_{\mp}^2G_{\pm}.
 \]
\end{lem}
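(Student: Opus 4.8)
The plan is to apply the $\mathbb{R}$-algebra homomorphism $Z^{\sh}$ to the shuffle identity of Lemma~\ref{word1} and to read off the result as an identity of generating series in $u$. The only input value needed is $Z^{\sh}(x(yx)^i)=-\zeta_1(\{2\}^i)$, which is Proposition~\ref{zmZ} with $m=1$ and $k_1=\dots=k_i=2$ (and $\zeta_1(\{2\}^0)=0$ matches $Z^{\sh}(x)=0$).

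Multiply the identity obtained from Lemma~\ref{word1} by $(\pm2)^{-n}u^{2n}$ and sum over $n\ge0$. Since $Z^{\sh}$ is multiplicative for $\sh$, the left-hand side becomes a Cauchy product, and using $(-1)^i(\pm2)^{-i}=(\mp2)^{-i}$ it equals
\[
 \Bigl(-\sum_{i\ge0}(\mp2)^{-i}\zeta_1(\{2\}^i)u^{2i}\Bigr)H_{\pm},\qquad
 H_{\pm}:=\sum_{n\ge0}(\pm2)^{-n}Z^{\sh}((xy)^n)u^{2n}.
\]
By Lemma~\ref{lem:zeta_1(2^n)_gen} with the signs reversed, $\sum_{i\ge0}(\mp2)^{-i}\zeta_1(\{2\}^i)u^{2i}=-2F_{\pm}G_{\mp}/u$, so the first factor is $2F_{\pm}G_{\mp}/u$. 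On the right-hand side I split the closed forms of Lemma~\ref{word1} by the parity of $n$; for even $n=2m$ the scalar $(\pm2)^{-n}2^{n}$ equals $1$ and for odd $n=2m+1$ it equals $\pm1$, which is precisely what produces the single sign in front of $B$ below. The identity therefore reads
\[
 \frac{2F_{\pm}G_{\mp}}{u}\,H_{\pm}=A\pm B,\qquad
 A=\sum_{m\ge0}(-1)^mZ^{\sh}\bigl((x^2y^2)^mx\bigr)u^{4m},\quad
 B=\sum_{m\ge0}(-1)^mZ^{\sh}\bigl(x^2(y^2x^2)^my\bigr)u^{4m+2}.
\]

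The remaining task is to evaluate $A$ and $B$, and this is the step I expect to be the main obstacle. Applying the duality of Proposition~\ref{duality} gives $Z^{\sh}((x^2y^2)^mx)=Z^{\sh}(y(x^2y^2)^m)$ and $Z^{\sh}(x^2(y^2x^2)^my)=Z^{\sh}(x(y^2x^2)^my^2)$, so each coefficient is a shuffle-regularized value of an index built from $1$'s and $3$'s but carrying a terminal block of $1$'s (hence \emph{not} covered by the earlier generating-series lemmas). The plan is to clear those terminal $1$'s by the shuffle regularization, reducing to convergent multiple zeta values, and then to compute the resulting generating series by the same $\sin$/$\cot$ expansion and $\zeta(\{2\}^n)=\pi^{2n}/(2n+1)!$ that were used in Lemmas~\ref{lem:zeta(4^n)_gen} and~\ref{prop:2star_2star}. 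The expected outcome is
\[
 A=\frac1u\Bigl(\frac{F_+G_-}{G_+}+\frac{F_-G_+}{G_-}\Bigr)+\frac{2F_+F_-G_+G_-(F_--F_+)}{u},\qquad
 B=\frac1u\Bigl(\frac{F_+G_-}{G_+}-\frac{F_-G_+}{G_-}\Bigr)+\frac{2F_+F_-G_+G_-(F_++F_-)}{u}.
\]

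Finally I substitute into $H_{\pm}=u(A\pm B)/(2F_{\pm}G_{\mp})$. The formulas above combine to
\[
 A\pm B=\frac{2F_{\pm}G_{\mp}}{uG_{\pm}}\pm\frac{4F_{\pm}F_{\mp}^2G_{\mp}G_{\pm}}{u},
\]
so the factor $2F_{\pm}G_{\mp}/u$ cancels and one obtains $H_{\pm}=G_{\pm}^{-1}\pm2F_{\mp}^2G_{\pm}$, as claimed. The whole argument thus rests on the two evaluations of $A$ and $B$; once they are in hand the remaining algebra is routine, and the low-order values $Z^{\sh}(1)=1$ and $Z^{\sh}(xy)=-\zeta(2)$ provide a convenient consistency check of the normalization.
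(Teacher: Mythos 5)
Your scaffolding is correct as far as it goes: applying $Z^{\sh}$ to Lemma~\ref{word1}, factoring the left-hand side as the Cauchy product $\frac{2F_{\pm}G_{\mp}}{u}H_{\pm}$ via Proposition~\ref{zmZ} and Lemma~\ref{lem:zeta_1(2^n)_gen}, and the parity bookkeeping giving $A\pm B$ are all right, and the final cancellation is legitimate because $\mathbb{R}[[u]]$ is an integral domain and $F_{\pm}G_{\mp}\neq0$. But the proof has a genuine gap exactly where you flag it: the evaluations of $A$ and $B$ are only \emph{asserted} as ``expected outcomes,'' and they are the entire content of the lemma in your approach. Note that by Proposition~\ref{zmZ} one has $Z^{\sh}(x^2(y^2x^2)^my)=\zeta_2(\{1,3\}^m,1)$, so $uB$ is precisely the series of Lemma~\ref{lem:zeta_2(1,3,1)_gen} --- but the paper proves that lemma \emph{using} the present lemma, so quoting it (or reproducing its proof, which invokes $\sum_n(\pm2)^{-n}Z^{\sh}((xy)^n)u^{2n}$) would be circular; you would need an independent derivation. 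The series $A$ is worse off: its coefficients are, after duality, the regularized values $\zeta^{\sh}(\{3,1\}^m,1)$ (equivalently $\zeta_2(\{1,3\}^{m-1},1,2)$), which are computed nowhere in the paper, and your plan to ``clear the terminal $1$'s by shuffle regularization'' is not routine --- every term of $y\sh(yx^2y)^m$ still ends in $y$, so a single shuffle with $y$ does not reduce to convergent MZVs and one needs the full $\mathfrak{H}^1_{\sh}=\mathfrak{H}^0_{\sh}[y]$ machinery. So what remains to be done is comparable in size to the other generating-series lemmas of Section~\ref{sec2}, and the fact that your stated $A$ and $B$ make the algebra close up is no evidence: they are forced by the (true) conclusion together with your identity, so the argument as written is consistent but not probative.

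For comparison, the paper avoids this obstacle entirely by starting from a different word identity, the telescoping (antipode-type) relation
\begin{equation*}
 \sum_{i=0}^{n}(yx)^i\sh(xy)^{n-i}-\sum_{i=1}^{n}(yx)^{i-1}y\sh(xy)^{n-i}x=\delta_{n,0},
\end{equation*}
whose right-hand side requires no evaluation at all. Applying $Z^{\sh}$, using duality to convert $Z^{\sh}((yx)^{i-1}y)$ and $Z^{\sh}((xy)^{n-i}x)$ into $-\zeta_1(\{2\}^{i-1})$ and $-\zeta_1(\{2\}^{n-i})$, and summing against $(\pm2)^{-n}u^{2n}$ gives $1=G_{\pm}H_{\pm}\mp2F_{\mp}^2G_{\pm}^2$ directly from Lemma~\ref{lem:zeta_1(2^n)_gen}, and solving for $H_{\pm}$ finishes the proof. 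If you want to salvage your route, the missing step is an independent computation of $A$ (or of $B$); as it stands, the proposal is a correct reduction, not a proof.
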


\begin{proof}
 We have
 \begin{align*}
  \delta_{n,0}
  &=1\sh(xy)^n-y\sh(xy)^{n-1}x+yx\sh(xy)^{n-1}-\cdots-(yx)^{n-1}y\sh x+(yx)^n\sh1\\
  &=\sum_{i=0}^{n}(yx)^i\sh(xy)^{n-i}-\sum_{i=1}^{n}(yx)^{i-1}y\sh(xy)^{n-i}x
 \end{align*}
 for every nonnegative integer $n$.
 It follows that
 \begin{align*}
  1&=\sum_{n\ge0}(\pm2)^{-n}\delta_{n,0}u^{2n}\\
  &=\sum_{n\ge0}(\pm2)^{-n}\biggl(\sum_{i=0}^{n}Z^{\sh}((yx)^i)Z^{\sh}((xy)^{n-i})-\sum_{i=1}^{n}Z^{\sh}((yx)^{i-1}y)Z^{\sh}((xy)^{n-i}x)\biggr)u^{2n}\\
  &=\sum_{n\ge0}(\pm2)^{-n}\biggl(\sum_{i=0}^{n}\zeta(\{2\}^i)Z^{\sh}((xy)^{n-i})-\sum_{i=1}^{n}\zeta_1(\{2\}^{i-1})\zeta_1(\{2\}^{n-i})\biggr)u^{2n}\\
  &\hspace*{250pt}(\text{Propositions \ref{zmZ} and \ref{duality}})\\
  &=\biggl(\sum_{n_1\ge0}(\pm2)^{-n_1}\zeta(\{2\}^{n_1})u^{2n_1}\biggr)\biggl(\sum_{n_2\ge0}(\pm2)^{-n_2}Z^{\sh}((xy)^{n_2})u^{2n_2}\biggr)\\
  &\hphantom{{}={}}\mp2^{-1}\biggl(\sum_{n_1\ge0}(\pm2)^{-n_1}\zeta_1(\{2\}^{n_1})u^{2n_1+1}\biggr)\biggl(\sum_{n_2\ge0}(\pm2)^{-n_2}\zeta_1(\{2\}^{n_2})u^{2n_2+1}\biggr)\\
  &=G_{\pm}\sum_{n\ge0}(\pm2)^{-n}Z^{\sh}((xy)^n)u^{2n}\mp2^{-1}(-2F_{\mp}G_{\pm})^2\qquad(\text{Lemma~\ref{lem:zeta_1(2^n)_gen}})\\
  &=G_{\pm}\sum_{n\ge0}(\pm2)^{-n}Z^{\sh}((xy)^n)u^{2n}\mp2F_{\mp}^2G_{\pm}^2,
 \end{align*}
 which completes the proof.
\end{proof}

\begin{lem}\label{lem:zeta_2(1,3,1,3)_gen}
 We have
 \[
  \sum_{n\ge0}(-1)^n\zeta_2(\{1,3\}^n)u^{4n+2}=2F_+F_-G_+G_-.
 \]
\end{lem}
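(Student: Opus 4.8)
The plan is to mimic the proof of Lemma~\ref{lem:zeta_1(1,3,1)_gen}, replacing the shuffle identity of Lemma~\ref{wordB} by that of Lemma~\ref{word2}. First I would record the word expressions furnished by Proposition~\ref{zmZ}: since the word attached to the index $(\{1,3\}^n)$ is $(y^2x^2)^n$ and $(-1)^2=1$, we have $\zeta_2(\{1,3\}^n)=Z^{\sh}(x^2(y^2x^2)^n)$, while $\zeta_1(\{2\}^i)=-Z^{\sh}(x(yx)^i)$. The point is that the target word $x^2(y^2x^2)^m$ appearing on the right-hand side of Lemma~\ref{word2} is exactly the word representing $\zeta_2(\{1,3\}^m)$.

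Next, using that $Z^{\sh}$ is multiplicative for the shuffle product, I would evaluate the alternating convolution
\[
 \sum_{i=0}^{n}(-1)^i \zeta_1(\{2\}^i)\zeta_1(\{2\}^{n-i})=Z^{\sh}\Bigl(\sum_{i=0}^{n}(-1)^i x(yx)^i \sh x(yx)^{n-i}\Bigr),
\]
where the two minus signs from the two factors $\zeta_1$ cancel. By Lemma~\ref{word2} the right-hand word vanishes for odd $n$ and equals $(-1)^{n/2}2^{n+1}x^2(y^2x^2)^{n/2}$ for even $n$, so the convolution is $0$ for odd $n$ and equals $(-1)^m2^{2m+1}\zeta_2(\{1,3\}^m)$ when $n=2m$.

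Then I would form the product of the two generating series from Lemma~\ref{lem:zeta_1(2^n)_gen} with the two sign choices:
\[
 \Bigl(\sum_{n_1\ge0}2^{-n_1}\zeta_1(\{2\}^{n_1})u^{2n_1+1}\Bigr)\Bigl(\sum_{n_2\ge0}(-2)^{-n_2}\zeta_1(\{2\}^{n_2})u^{2n_2+1}\Bigr)=(-2F_-G_+)(-2F_+G_-)=4F_+F_-G_+G_-.
\]
Expanding the left-hand side as a Cauchy product, extracting the coefficient of $u^{2n+2}$, and using $2^{-i}(-2)^{-(n-i)}=(-1)^n(-1)^i2^{-n}$, the left-hand side becomes $\sum_{n\ge0}(-1)^n2^{-n}\bigl(\sum_{i}(-1)^i\zeta_1(\{2\}^i)\zeta_1(\{2\}^{n-i})\bigr)u^{2n+2}$. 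Substituting the value of the inner sum kills the odd terms and leaves $\sum_{m\ge0}2(-1)^m\zeta_2(\{1,3\}^m)u^{4m+2}$; equating this with $4F_+F_-G_+G_-$ and dividing by $2$ yields the claim.

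The bookkeeping I would watch most carefully is the interplay of the three sources of signs, namely the $(\pm2)^{-n}$ weights, the alternating sign $(-1)^i$ in the shuffle identity, and the factor $(-1)^{n/2}$ in Lemma~\ref{word2}, together with the parity restriction that only even $n=2m$ survive. This is where a slip would most easily occur, but once these are aligned the argument is routine and no genuinely hard step remains.
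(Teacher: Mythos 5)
Your proof is correct and follows essentially the same route as the paper's: both rest on Proposition~\ref{zmZ}, the shuffle identity of Lemma~\ref{word2}, and the generating series of Lemma~\ref{lem:zeta_1(2^n)_gen}, with identical sign bookkeeping. The only difference is direction of travel: the paper starts from $\sum_{n\ge0}(-1)^n\zeta_2(\{1,3\}^n)u^{4n+2}$ and transforms it into $2^{-1}(-2F_+G_-)(-2F_-G_+)$, whereas you start from the product $(-2F_-G_+)(-2F_+G_-)=4F_+F_-G_+G_-$ and expand it back into twice the target series, which is the same computation read in reverse.
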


\begin{proof}
 We have
 \begin{align*}
  &\sum_{n\ge0}(-1)^n\zeta_2(\{1,3\}^n)u^{4n+2}\\
  &=\sum_{n\ge0}(-1)^nZ^{\sh}(x^2(y^2x^2)^n)u^{4n+2}\qquad(\text{Proposition~\ref{zmZ}})\\
  &=\sum_{\substack{n\ge0\\n:\mathrm{even}}}(-1)^{n/2}Z^{\sh}(x^2(y^2x^2)^{n/2})u^{2n+2}\\
  &=\sum_{n\ge0}2^{-n-1}\sum_{i=0}^{n}(-1)^iZ^{\sh}(x(yx)^i)Z^{\sh}(x(yx)^{n-i})u^{2n+2}\qquad(\text{Lemma~\ref{word2}})\\
  &=\sum_{n\ge0}2^{-n-1}\sum_{i=0}^{n}(-1)^i\zeta_1(\{2\}^i)\zeta_1(\{2\}^{n-i})u^{2n+2}\qquad(\text{Proposition~\ref{zmZ}})\\
  &=2^{-1}\biggl(\sum_{n_1\ge0}(-2)^{-n_1}\zeta_1(\{2\}^{n_1})u^{2n_1+1}\biggr)\biggl(\sum_{n_2\ge0}2^{-n_2}\zeta_1(\{2\}^{n_2})u^{2n_2+1}\biggr)\\
  &=2^{-1}(-2F_+G_-)(-2F_-G_+)\qquad(\text{Lemma~\ref{lem:zeta_1(2^n)_gen}})\\
  &=2F_+F_-G_+G_-.\qedhere
 \end{align*}
\end{proof}

\begin{lem}\label{lem:zeta_2(1,3,1)_gen}
 We have
 \[
  \sum_{n\ge0}(-1)^n\zeta_2(\{1,3\}^n,1)u^{4n+3}=\frac{F_+G_-^2-F_-G_+^2}{G_+G_-}+2F_+F_-(F_++F_-)G_+G_-.
 \]
\end{lem}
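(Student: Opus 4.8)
The plan is to mirror the proofs of Lemmas~\ref{lem:zeta_2(1,3,1,3)_gen} and \ref{lem:Z(xyxy)_gen}, reducing the left-hand side to generating series already computed, via a single shuffle identity that is pleasantly already available. By Proposition~\ref{zmZ}, since the index $(\{1,3\}^n,1)$ corresponds to the word $(y^2x^2)^ny$ and $(-1)^2=1$, we have $\zeta_2(\{1,3\}^n,1)=Z^{\sh}(x^2(y^2x^2)^ny)$. The linchpin is that the odd case of Lemma~\ref{word1}, evaluated at $2n+1$, produces precisely this word:
\[
 \sum_{i=0}^{2n+1}(-1)^ix(yx)^i\sh(xy)^{2n+1-i}=(-1)^n2^{2n+1}x^2(y^2x^2)^ny.
\]
Applying $Z^{\sh}$, using its multiplicativity together with the identity $Z^{\sh}(x(yx)^i)=-\zeta_1(\{2\}^i)$ (Proposition~\ref{zmZ}), I would obtain
\[
 (-1)^n\zeta_2(\{1,3\}^n,1)=-\frac{1}{2^{2n+1}}\sum_{i=0}^{2n+1}(-1)^i\zeta_1(\{2\}^i)Z^{\sh}((xy)^{2n+1-i}).
\]

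Next I would recognize the right-hand side as a coefficient extracted from a product of two known generating series. Set $\Phi_{\pm}=\sum_{m\ge0}(\pm2)^{-m}\zeta_1(\{2\}^m)u^{2m+1}$ and $\Psi_{\pm}=\sum_{m\ge0}(\pm2)^{-m}Z^{\sh}((xy)^m)u^{2m}$, so that $\Phi_{\pm}=-2F_{\mp}G_{\pm}$ by Lemma~\ref{lem:zeta_1(2^n)_gen} and $\Psi_{\pm}=G_{\pm}^{-1}\pm2F_{\mp}^2G_{\pm}$ by Lemma~\ref{lem:Z(xyxy)_gen}. Writing $N=2n+1$, the coefficient of $u^{2N+1}$ in $\Phi_-\Psi_+$ is exactly $2^{-N}\sum_{i=0}^{N}(-1)^i\zeta_1(\{2\}^i)Z^{\sh}((xy)^{N-i})$, so $\sum_{n\ge0}(-1)^n\zeta_2(\{1,3\}^n,1)u^{4n+3}$ is $-1$ times the odd-$N$ part of $\Phi_-\Psi_+$. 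To extract that part I would invoke the parity trick underlying this whole family of lemmas: interchanging the two sign parameters multiplies the coefficient of $u^{2N+1}$ by $(-1)^N$, so the coefficient of $u^{2N+1}$ in $\Phi_+\Psi_-$ equals $(-1)^N$ times that in $\Phi_-\Psi_+$, whence $\Phi_-\Psi_+-\Phi_+\Psi_-$ is twice the odd-$N$ part of $\Phi_-\Psi_+$. Consequently the sought series equals $-\tfrac12(\Phi_-\Psi_+-\Phi_+\Psi_-)$.

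Finally I would substitute $\Phi_{\pm}=-2F_{\mp}G_{\pm}$ and $\Psi_{\pm}=G_{\pm}^{-1}\pm2F_{\mp}^2G_{\pm}$ and expand: the terms coming from $G_{\pm}^{-1}$ assemble into $\frac{F_+G_-^2-F_-G_+^2}{G_+G_-}$, while the terms coming from $\pm2F_{\mp}^2G_{\pm}$ assemble into $2F_+F_-(F_++F_-)G_+G_-$, giving the claimed identity. The only genuinely delicate point is the parity bookkeeping: Lemma~\ref{word1} delivers the target word $x^2(y^2x^2)^ny$ only through its odd case (its even case yields an unrelated family of words), so one must isolate precisely the odd-$N$ contribution, which is exactly what the antisymmetric combination $\Phi_-\Psi_+-\Phi_+\Psi_-$ accomplishes. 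Everything else is routine Cauchy-product bookkeeping and the closing algebraic simplification.
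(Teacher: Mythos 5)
Your proposal is correct and follows essentially the same route as the paper's own proof: Proposition~\ref{zmZ} to identify $\zeta_2(\{1,3\}^n,1)$ with $Z^{\sh}(x^2(y^2x^2)^ny)$, the odd case of Lemma~\ref{word1} to expand that word as an alternating shuffle sum, and then the parity-extraction via the antisymmetric combination $\Phi_-\Psi_+-\Phi_+\Psi_-$ (the paper writes this same step as inserting the factor $\tfrac12(1-(-1)^{n_1+n_2})$), finishing with Lemmas~\ref{lem:zeta_1(2^n)_gen} and \ref{lem:Z(xyxy)_gen}. All signs and coefficients in your bookkeeping check out, so this is the paper's argument in equivalent form.
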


\begin{proof}
 We have
 \begin{align*}
  &\sum_{n\ge0}(-1)^n\zeta_2(\{1,3\}^n,1)u^{4n+3}\\
  &=\sum_{n\ge0}(-1)^nZ^{\sh}(x^2(y^2x^2)^ny)u^{4n+3}\qquad(\text{Proposition~\ref{zmZ}})\\
  &=\sum_{n\ge0}2^{-2n-1}\sum_{i=0}^{2n+1}(-1)^iZ^{\sh}(x(yx)^i)Z^{\sh}((xy)^{2n-i+1})u^{4n+3}\qquad(\text{Lemma~\ref{word1}})\\
  &=-\sum_{n\ge0}2^{-2n-1}\sum_{i=0}^{2n+1}(-1)^i\zeta_1(\{2\}^i)Z^{\sh}((xy)^{2n-i+1})u^{4n+3}\qquad(\text{Proposition~\ref{zmZ}})\\
  &=-\sum_{\substack{n_1,n_2\ge0\\n_1+n_2:\mathrm{odd}}}2^{-n_1-n_2}(-1)^{-n_1}\zeta_1(\{2\}^{n_1})Z^{\sh}((xy)^{n_2})u^{2(n_1+n_2)+1}\\
  &=-2^{-1}\sum_{n_1,n_2\ge0}(1-(-1)^{n_1+n_2})2^{-n_1-n_2}(-1)^{-n_1}\zeta_1(\{2\}^{n_1})Z^{\sh}((xy)^{n_2})u^{2(n_1+n_2)+1}\\
  &=-2^{-1}\biggl(\biggl(\sum_{n_1\ge0}(-2)^{-n_1}\zeta_1(\{2\}^{n_1})u^{2n_1+1}\biggr)\biggl(\sum_{n_2\ge0}2^{-n_2}Z^{\sh}((xy)^{n_2})u^{2n_2}\biggr)\\
  &\hphantom{=2^{-1}\biggl(}-\biggl(\sum_{n_1\ge0}2^{-n_1}\zeta_1(\{2\}^{n_1})u^{2n_1+1}\biggr)\biggl(\sum_{n_2\ge0}(-2)^{-n_2}Z^{\sh}((xy)^{n_2})u^{2n_2}\biggr)\biggr)\\
  &=-2^{-1}(-2F_+G_-(G_+^{-1}+2F_-^2G_+)+2F_-G_+(G_-^{-1}-2F_+^2G_-))\qquad(\text{Lemmas~\ref{lem:zeta_1(2^n)_gen} and \ref{lem:Z(xyxy)_gen}})\\
  &=\frac{F_+G_-^2-F_-G_+^2}{G_+G_-}+2F_+F_-(F_++F_-)G_+G_-.\qedhere
 \end{align*}
\end{proof}

\subsection{Proofs of Theorems \ref{main1} and \ref{main2}}
\begin{proof}[Proof of Theorem~\ref{main1}]
 Since
 \begin{align*}
  &\zeta_{\mathcal{S}_3}(\{3,1\}^n)\\
  &=\sum_{i=0}^{n}\zeta(\{3,1\}^i)\sum_{m=0}^{2}\zeta_m(\{1,3\}^{n-i})t^m-\sum_{i=1}^{n}\zeta(\{3,1\}^{i-1},3)\sum_{m=0}^{2}\zeta_m(\{1,3\}^{n-i},1)t^m
 \end{align*}
 for every nonnegative integer $n$, we have
 \begin{align*}
  &\sum_{n\ge0}(-1)^n\zeta_{\mathcal{S}_3}(\{3,1\}^n)u^{4n}\\
  &=\sum_{n\ge0}(-1)^n\sum_{i=0}^{n}\zeta(\{3,1\}^i)\sum_{m=0}^{2}\zeta_m(\{1,3\}^{n-i})t^mu^{4n}\\
  &\hphantom{{}={}}-\sum_{n\ge0}(-1)^n\sum_{i=1}^{n}\zeta(\{3,1\}^{i-1},3)\sum_{m=0}^{2}\zeta_m(\{1,3\}^{n-i},1)t^mu^{4n}\\
  &=\biggl(\sum_{n_1\ge0}(-1)^{n_1}\zeta(\{3,1\}^{n_1})u^{4n_1}\biggr)\biggl(\sum_{m=0}^{2}\sum_{n_2\ge0}(-1)^{n_2}\zeta_m(\{1,3\}^{n_2})u^{4n_2}t^m\biggr)\\
  &\hphantom{{}={}}+\biggl(\sum_{n_1\ge0}(-1)^{n_1}\zeta(\{3,1\}^{n_1},3)u^{4n_1+3}\biggr)\biggl(\sum_{m=0}^{2}\sum_{n_2\ge0}(-1)^{n_2}\zeta_m(\{1,3\}^{n_2},1)u^{4n_2+1}t^m\biggr)\\
  &=\biggl(\frac{G_+^2+G_-^2}{2G_+G_-}-(F_+^2-F_-^2)G_+G_-\biggr)\biggl(G_+G_--(F_++F_-)G_+G_-\frac{t}{u}+2F_+F_-G_+G_-\frac{t^2}{u^2}\biggr)\\
  &\hphantom{{}={}}+(F_+-F_-)G_+G_-\biggl((F_++F_-)G_+G_-+\biggl(\frac{G_+^2-G_-^2}{2G_+G_-}-(F_++F_-)^2G_+G_-\biggr)\frac{t}{u}\\
  &\hphantom{{}={}+(F_+-F_-)G_+G_-\biggl(}+\biggl(\frac{F_+G_-^2-F_-G_+^2}{G_+G_-}+2F_+F_-(F_++F_-)G_+G_-\biggr)\frac{t^2}{u^2}\biggr)\\
  &=\frac{G_+^2+G_-^2}{2}-(F_+G_-^2+F_-G_+^2)\frac{t}{u}+(F_+^2G_-^2+F_-^2G_+^2)\frac{t^2}{u^2}.
 \end{align*}
 by Lemmas \ref{lem:zeta(4^n)_gen}, \ref{lem:zeta(3,1,3)_gen}, \ref{lem:zeta(1,3,1)_gen}, \ref{lem:zeta(3,1,3,1)_gen}, \ref{lem:zeta_1(1,3,1,3)_gen}, \ref{lem:zeta_1(1,3,1)_gen}, \ref{lem:zeta_2(1,3,1,3)_gen}, and \ref{lem:zeta_2(1,3,1)_gen}.

 Now since Lemma \ref{lem:zeta(4^n)_gen} implies that
 \[
  \frac{G_+^2+G_-^2}{2}=\sum_{\substack{n\ge0}}\frac{2^{2n+1}\pi^{4n}}{(4n+2)!}u^{4n},
 \]
 that
 \begin{align*}
  -(F_+G_-^2+F_-G_+^2)\frac{t}{u}
  &=-\biggl(\sum_{n_1\ge0}2^{-n_1}\zeta(2n_1+1)u^{2n_1+1}\biggr)\biggl(\sum_{n_0\ge0}\frac{(-1)^{n_0}2^{n_0+1}\pi^{2n_0}}{(2n_0+2)!}u^{2n_0}\biggr)\frac{t}{u}\\
  &\hphantom{{}={}}-\biggl(\sum_{n_1\ge0}(-2)^{-n_1}\zeta(2n_1+1)u^{2n_1+1}\biggr)\biggl(\sum_{n_0\ge0}\frac{2^{n_0+1}\pi^{2n_0}}{(2n_0+2)!}u^{2n_0}\biggr)\frac{t}{u}\\
  &=-\sum_{n_0,n_1\ge0}\frac{((-1)^{n_0}+(-1)^{n_1})2^{n_0-n_1+1}}{(2n_0+2)!}\pi^{2n_0}\zeta(2n_1+1)u^{2n_0+2n_1}t\\
  &=-\sum_{\substack{n_0,n_1\ge0\\n_0+n_1:\mathrm{even}}}\frac{(-1)^{n_0}2^{n_0-n_1+2}}{(2n_0+2)!}\pi^{2n_0}\zeta(2n_1+1)u^{2n_0+2n_1}t,
 \end{align*}
 and that
 \begin{align*}
  &(F_+^2G_-^2+F_-^2G_+^2)\frac{t^2}{u^2}\\
  &=\biggl(\sum_{n_1\ge0}2^{-n_1}\zeta(2n_1+1)u^{2n_1+1}\biggr)\biggl(\sum_{n_2\ge0}2^{-n_2}\zeta(2n_2+1)u^{2n_2+1}\biggr)\biggl(\sum_{n_0\ge0}\frac{(-1)^{n_0}2^{n_0+1}\pi^{2n_0}}{(2n_0+2)!}u^{2n_0}\biggr)\frac{t^2}{u^2}\\
  &\hphantom{{}={}}+\biggl(\sum_{n_1\ge0}(-2)^{-n_1}\zeta(2n_1+1)u^{2n_1+1}\biggr)\biggl(\sum_{n_2\ge0}(-2)^{-n_2}\zeta(2n_2+1)u^{2n_2+1}\biggr)\biggl(\sum_{n_0\ge0}\frac{2^{n_0+1}\pi^{2n_0}}{(2n_0+2)!}u^{2n_0}\biggr)\frac{t^2}{u^2}\\
  &=\sum_{n_0,n_1,n_2\ge0}\frac{((-1)^{n_0}+(-1)^{n_1+n_2})2^{n_0-n_1-n_2+1}}{(2n_0+2)!}\pi^{2n_0}\zeta(2n_1+1)\zeta(2n_2+1)u^{2(n_0+n_1+n_2)}t^2\\
  &=\sum_{\substack{n_0,n_1,n_2\ge0\\n_0+n_1+n_2:\mathrm{even}}}\frac{(-1)^{n_0}2^{n_0-n_1-n_2+2}}{(2n_0+2)!}\pi^{2n_0}\zeta(2n_1+1)\zeta(2n_2+1)u^{2(n_0+n_1+n_2)},
 \end{align*}
 we have
 \begin{align*}
  (-1)^n\zeta_{\mathcal{S}_3}(\{3,1\}^n)
  &=\frac{2^{2n+1}}{(4n+2)!}\pi^{4n}-\sum_{\substack{n_0,n_1\ge0\\n_0+n_1=2n}}\frac{(-1)^{n_0}2^{n_0-n_1+2}}{(2n_0+2)!}\pi^{2n_0}\zeta(2n_1+1)t\\
  &\hphantom{{}={}}+\sum_{\substack{n_0,n_1,n_2\ge0\\n_0+n_1+n_2=2n}}\frac{(-1)^{n_0}2^{n_0-n_1-n_2+2}}{(2n_0+2)!}\pi^{2n_0}\zeta(2n_1+1)\zeta(2n_2+1)t^2,
 \end{align*}
 from which the theorem follows.
\end{proof}

\begin{proof}[Proof of Theorem~\ref{main2}]
 Since
 \begin{align*}
  &\zeta_{\mathcal{S}_3}(\{1,3\}^n,1)\\
  &=-\sum_{i=0}^{n}\zeta(\{1,3\}^i)\sum_{m=0}^{2}\zeta_m(\{1,3\}^{n-i},1)t^m+\sum_{i=0}^{n}\zeta(\{1,3\}^i,1)\sum_{m=0}^{2}\zeta_m(\{1,3\}^{n-i})t^m
 \end{align*}
 for every nonnegative integer $n$, we have
 \begin{align*}
  &\sum_{n\ge0}(-1)^n\zeta_{\mathcal{S}_3}(\{1,3\}^n,1)u^{4n+1}\\
  &=-\sum_{n\ge0}(-1)^n\sum_{i=0}^{n}\zeta(\{1,3\}^i)\sum_{m=0}^{2}\zeta_m(\{1,3\}^{n-i},1)t^mu^{4n+1}\\
  &\hphantom{{}={}}+\sum_{n\ge0}(-1)^n\sum_{i=0}^{n}\zeta(\{1,3\}^i,1)\sum_{m=0}^{2}\zeta_m(\{1,3\}^{n-i})t^mu^{4n+1}\\
  &=-\biggl(\sum_{n_1\ge0}(-1)^{n_1}\zeta(\{1,3\}^{n_1})u^{4n_1}\biggr)\biggl(\sum_{m=0}^{2}\sum_{n_2\ge0}(-1)^{n_2}\zeta_m(\{1,3\}^{n_2},1)u^{4n_2+1}t^m\biggr)\\
  &\hphantom{{}={}}+\biggl(\sum_{n_1\ge0}(-1)^{n_1}\zeta(\{1,3\}^{n_1},1)u^{4n_1+1}\biggr)\biggl(\sum_{m=0}^{2}\sum_{n_2\ge0}(-1)^{n_2}\zeta_m(\{1,3\}^{n_2})u^{4n_2}t^m\biggr)\\
  &=-G_+G_-\biggl((F_++F_-)G_+G_-+\biggl(\frac{G_+^2-G_-^2}{2G_+G_-}-(F_++F_-)^2G_+G_-\biggr)\frac{t}{u}\\
  &\hphantom{=-G_+G_-\biggl(}+\biggl(\frac{F_+G_-^2-F_-G_+^2}{G_+G_-}+2F_+F_-(F_++F_-)G_+G_-\biggr)\frac{t^2}{u^2}\biggr)\\
  &\hphantom{{}={}}+(F_++F_-)G_+G_-\biggl(G_+G_--(F_++F_-)G_+G_-\frac{t}{u}+2F_+F_-G_+G_-\frac{t^2}{u^2}\biggr)\\
  &=-\frac{G_+^2-G_-^2}{2}\cdot\frac{t}{u}+(-F_+G_-^2+F_-G_+^2)\frac{t^2}{u^2}
 \end{align*}
 by Lemmas \ref{lem:zeta(4^n)_gen}, \ref{lem:zeta(1,3,1)_gen}, \ref{lem:zeta_1(1,3,1,3)_gen}, \ref{lem:zeta_1(1,3,1)_gen}, \ref{lem:zeta_2(1,3,1,3)_gen}, and \ref{lem:zeta_2(1,3,1)_gen}.

 Now since Lemma~\ref{lem:zeta(4^n)_gen} implies that
 \[
  -\frac{G_+^2-G_-^2}{2}\cdot\frac{t}{u}=-\sum_{\substack{n\ge0\\n:\mathrm{odd}}}\frac{2^{n+1}\pi^{2n}}{(2n+2)!}u^{2n-1}t=-\sum_{n\ge0}\frac{2^{2n+2}\pi^{4n+2}}{(4n+4)!}u^{4n+1}t
 \]
 and that
 \begin{align*}
  &(-F_+G_-^2+F_-G_+^2)\frac{t^2}{u^2}\\
  &=-\biggl(\sum_{n_1\ge0}2^{-n_1}\zeta(2n_1+1)u^{2n_1+1}\biggr)\biggl(\sum_{n_0\ge0}\frac{(-1)^{n_0}2^{n_0+1}\pi^{2n_0}}{(2n_0+2)!}u^{2n_0}\biggr)\frac{t^2}{u^2}\\
  &\hphantom{{}={}}+\biggl(\sum_{n_1\ge0}(-2)^{-n_1}\zeta(2n_1+1)u^{2n_1+1}\biggr)\biggl(\sum_{n_0\ge0}\frac{2^{n_0+1}\pi^{2n_0}}{(2n_0+2)!}u^{2n_0}\biggr)\frac{t^2}{u^2}\\
  &=\sum_{n_0,n_1\ge0}\frac{(-(-1)^{n_0}+(-1)^{n_1})2^{n_0-n_1+1}}{(2n_0+2)!}\pi^{2n_0}\zeta(2n_1+1)u^{2(n_0+n_1)-1}t^2\\
  &=\sum_{\substack{n_0,n_1\ge0\\n_0+n_1:\mathrm{odd}}}\frac{(-1)^{n_1}2^{n_0-n_1+2}}{(2n_0+2)!}\pi^{2n_0}\zeta(2n_1+1)u^{2(n_0+n_1)-1}t^2,
 \end{align*}
 we have
 \[
  (-1)^n\zeta_{\mathcal{S}_3}(\{1,3\}^n,1)
  =-\frac{2^{2n+2}\pi^{4n+2}}{(4n+4)!}t
   +\sum_{\substack{n_0,n_1\ge0\\n_0+n_1=2n+1}}\frac{(-1)^{n_1}2^{n_0-n_1+2}}{(2n_0+2)!}\pi^{2n_0}\zeta(2n_1+1)t^2
 \]
 from which the theorem follows.
\end{proof}

\subsection{Proof of Theorem~\ref{main0}}
We define $\mathbb{Q}$-linear maps $I_0,I_1\colon\mathcal{I}\to\mathcal{I}$ by
\begin{align*}
 I_0(k_1,\dots,k_r)&=\sum_{i=0}^{r}(-1)^{k_{i+1}+\dots+k_r}(k_1,\dots,k_i)*(k_r,\dots,k_{i+1}),\\
 I_1(k_1,\dots,k_r)&=\sum_{i=0}^{r}(-1)^{k_{i+1}+\dots+k_r}(k_1,\dots,k_i)*\sigma(k_r,\dots,k_{i+1})
\end{align*}
for all indices $(k_1,\dots,k_r)$, where $\sigma\colon\mathcal{I}\to\mathcal{I}$ is the $\mathbb{Q}$-linear map defined by
\begin{align*}
 \sigma(k_1,\dots,k_r)
 &=\sum_{\substack{l_1,\dots,l_r\ge0\\l_1+\dots+l_r=1}}(k_1+l_1,\dots,k_r+l_r)\prod_{i=1}^{r}\binom{k_i+l_i-1}{l_i}\\
 &=\sum_{i=1}^{r}k_i(k_1,\dots,k_{i-1},k_i+1,k_{i+1},\dots,k_r).
\end{align*}
Observe that
\[
 \zeta_{\mathcal{S}_2}^*(k_1,\dots,k_r)=\zeta^*(I_0(k_1,\dots,k_r))+\zeta^*(I_1(k_1,\dots,k_r))t
\]
and that
\[
\sigma(\boldsymbol{k}*\boldsymbol{l})=\sigma(\boldsymbol{k})*\boldsymbol{l}+\boldsymbol{k}*\sigma(\boldsymbol{l})
\]
for every $\boldsymbol{k},\boldsymbol{l}\in\mathcal{I}$.

\begin{lem}\label{lem:I_0(a,b,a,b)}
 If $a$ and $b$ are odd positive integers, then we have
 \[
  I_0(\{a,b\}^n)=(-1)^n(\{a+b\}^n)
 \]
 for every nonnegative integer $n$.
\end{lem}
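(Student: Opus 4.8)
The plan is to first remove the weight-dependent sign and then prove the collapse by induction on $n$. Since $a$ and $b$ are odd, for $\boldsymbol{k}=\{a,b\}^n$ (of length $2n$) the exponent $k_{i+1}+\dots+k_{2n}$ is congruent modulo $2$ to the number $2n-i$ of its summands, so $(-1)^{k_{i+1}+\dots+k_{2n}}=(-1)^{i}$. Hence
\[
 I_0(\{a,b\}^n)=\sum_{i=0}^{2n}(-1)^{i}(k_1,\dots,k_i)*(k_{2n},\dots,k_{i+1}),
\]
an alternating sum over all ways of cutting $\boldsymbol{k}$ into a head and a reversed tail, combined by the harmonic product $*$. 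First I would record the base cases $n=0$ (both sides equal $\emptyset$) and $n=1$, where a direct expansion of $(a)*(b)=(a,b)+(b,a)+(a+b)$ makes the four non-merged length-$2$ words cancel and leaves exactly $-(a+b)$; this already displays the mechanism behind the general statement.

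For the inductive step I would aim to establish the recursion
\[
 I_0(\{a,b\}^n)=-\bigl(a+b,\ I_0(\{a,b\}^{n-1})\bigr),
\]
where $(c,\,\cdot\,)$ denotes prepending the entry $c$; granting this, the induction hypothesis $I_0(\{a,b\}^{n-1})=(-1)^{n-1}(\{a+b\}^{n-1})$ immediately yields $I_0(\{a,b\}^n)=(-1)^{n}(\{a+b\}^{n})$. To produce such a recursion I would isolate the two outer letters: the first entry of every nonempty head is $a$, and for every cut with $i<2n$ the reversed tail begins with the last entry $b$. Applying the front form of the recursion defining $*$ (available by commutativity of $*$) to each middle term $(a,\dots)*(b,\dots)$ splits it into three parts headed by $a$, by $b$, and by $a+b$; the $(a+b)$-headed parts should reassemble into $-(a+b,\,\cdot\,)$ applied to the length-$(2n-2)$ cut-sum, that is, to $I_0(\{a,b\}^{n-1})$, while the $a$-headed and $b$-headed parts, together with the two boundary terms $i=0$ and $i=2n$, are precisely what must cancel.

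The main obstacle is exactly this cancellation of all words that retain an unmerged outer $a$ or $b$. I expect to handle it either by exhibiting a sign-reversing involution that pairs up the surviving $a$-headed and $b$-headed monomials (matching the pattern already visible at $n=1$), or---if the plain recursion above does not close on the nose---by proving a slightly stronger statement for an auxiliary family of alternating sums $\sum_i(-1)^i\,\boldsymbol{p}*\overline{\boldsymbol{q}}$ attached to words of the shape $(\{a,b\}^j,\dots)$, chosen so that the family is stable under peeling one pair. Since the harmonic product generates many merged terms at each step, keeping track of them and verifying that everything except the fully pairwise-merged word $(\{a+b\}^n)$ dies is the delicate part; the commutativity of $*$ and its invariance under simultaneous reversal of both arguments should be the principal tools for organizing the bookkeeping.
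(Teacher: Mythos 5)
Your setup is fine (the sign simplification $(-1)^{k_{i+1}+\dots+k_{2n}}=(-1)^i$, the base cases, and induction via a peeling recursion), but the inductive step has a genuine gap exactly at the point you yourself flag as delicate, and your primary mechanism for closing it cannot work. Because you expand each middle term at the \emph{first} letters, the merged entry is $k_1+k_{2n}=a+b$: you merge the two global endpoints, not the two entries adjacent to the cut. The unmerged words then do not telescope between consecutive cuts. Collecting them together with the boundary terms $i=0$ and $i=2n$, the $a$-headed words assemble into $(a,\,\cdot\,)$ applied to the full alternating cut-sum of the odd-length index $(k_2,\dots,k_{2n})=(\{b,a\}^{n-1},b)$, and the $b$-headed words into $(b,\,\cdot\,)$ applied to that of $(k_1,\dots,k_{2n-1})=(\{a,b\}^{n-1},a)$. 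When $a\neq b$, an $a$-headed word and a $b$-headed word are distinct basis elements of $\mathcal{I}$, so no sign-reversing involution can pair the two families against each other; each family must vanish on its own. That vanishing is true, but it is an extra fact you never supply: one needs the reversal antisymmetry $I_0(k_r,\dots,k_1)=(-1)^{k_1+\cdots+k_r}I_0(k_1,\dots,k_r)$ (a consequence of commutativity of $*$), which forces $I_0$ to kill any palindromic index of odd weight --- and both residual indices above are odd-weight palindromes. There is also a smaller slip: the merged family reassembles into $-(a+b,\,I_0(\{b,a\}^{n-1}))$, not $-(a+b,\,I_0(\{a,b\}^{n-1}))$; these agree, but again only by the same reversal symmetry (or by running the induction for both orderings of $a,b$ simultaneously).

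The paper avoids all of this by expanding each term with the defining (last-entry) recursion of $*$, i.e.\ at the cut point, merging the adjacent entries $k_i,k_{i+1}$ into $a+b$. There the unmerged part of cut $i$ ending in $k_i$ is literally the unmerged part of cut $i-1$ ending in $k_i$ with opposite sign, so everything cancels by exact telescoping (boundary terms included) with no auxiliary lemma, and the merged terms reassemble directly into $-(I_0(\{a,b\}^n),a+b)$. Your route is salvageable --- add the front form of the $*$-recursion, identify the residual families as $I_0$ of odd-weight palindromes, and prove the vanishing via reversal antisymmetry, which your closing sentence gestures toward --- but as written the crucial cancellation is not established, and the involution you propose as the main device is impossible for $a\neq b$.
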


\begin{proof}
 We proceed by induction on $n$.
 The assertion is obvious for $n=0$; suppose that it is true for $n$.
 Write $k_i=a$ for odd $i$ and $k_i=b$ for even $i$.
 Then we have
 \begin{align*}
  &I_0(\{a,b\}^{n+1})\\
  &=I_0(k_1,\dots,k_{2n+2})\\
  &=\sum_{i=0}^{2n+2}(-1)^{k_{i+1}+\dots+k_{2n+2}}(k_1,\dots,k_i)*(k_{2n+2},\dots,k_{i+1})\\
  &=\sum_{i=0}^{2n+2}(-1)^i(k_1,\dots,k_i)*(k_{2n+2},\dots,k_{i+1})\\
  &=\sum_{i=1}^{2n+1}(-1)^i(((k_1,\dots,k_{i-1})*(k_{2n+2},\dots,k_{i+1}),k_i)+((k_1,\dots,k_i)*(k_{2n+2},\dots,k_{i+2}),k_{i+1}))\\
  &\hphantom{{}={}}+\sum_{i=1}^{2n+1}(-1)^i((k_1,\dots,k_{i-1})*(k_{2n+2},\dots,k_{i+2}),k_i+k_{i+1})\\
  &\hphantom{{}={}}+(k_{2n+2},\dots,k_1)+(k_1,\dots,k_{2n+2})\\
  &=\sum_{i=1}^{2n+1}(-1)^i((k_1,\dots,k_{i-1})*(k_{2n+2},\dots,k_{i+2}),k_i+k_{i+1})\\
  &=-\biggl(\sum_{i=0}^{2n}(-1)^i(k_1,\dots,k_i)*(k_{2n},\dots,k_{i+1}),a+b\biggr)\\
  &=-(I_0(k_1,\dots,k_{2n}),a+b)\\
  &=-(I_0(\{a,b\}^n),a+b)\\
  &=-(-1)^n(\{a+b\}^n,a+b)\qquad(\text{induction hypothesis})\\
  &=(-1)^{n+1}(\{a+b\}^{n+1}).\qedhere
 \end{align*}
\end{proof}

\begin{lem}\label{lem:I_1_reversal_sum}
 If $k_1,\dots,k_r$ are positive integers with $k_1+\dots+k_r$ even, then we have
 \[
  I_1(k_1,\dots,k_r)+I_1(k_r,\dots,k_1)=\sigma(I_0(k_1,\dots,k_r)).
 \]
\end{lem}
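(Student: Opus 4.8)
The plan is to expand $\sigma(I_0(k_1,\dots,k_r))$ by means of the Leibniz rule for $\sigma$ recorded just above the lemma, namely $\sigma(\boldsymbol{k}*\boldsymbol{l})=\sigma(\boldsymbol{k})*\boldsymbol{l}+\boldsymbol{k}*\sigma(\boldsymbol{l})$, and then to match the two resulting halves with $I_1(k_1,\dots,k_r)$ and $I_1(k_r,\dots,k_1)$ respectively. Applying $\sigma$ term by term to the definition of $I_0$ yields
\[
 \sigma(I_0(k_1,\dots,k_r))
 =\sum_{i=0}^{r}(-1)^{k_{i+1}+\dots+k_r}\bigl(\sigma(k_1,\dots,k_i)*(k_r,\dots,k_{i+1})+(k_1,\dots,k_i)*\sigma(k_r,\dots,k_{i+1})\bigr).
\]
The second summand, taken over all $i$, is exactly $I_1(k_1,\dots,k_r)$ by definition, so it remains to identify the first summand
\[
 S=\sum_{i=0}^{r}(-1)^{k_{i+1}+\dots+k_r}\sigma(k_1,\dots,k_i)*(k_r,\dots,k_{i+1})
\]
with $I_1(k_r,\dots,k_1)$.

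Next I would write $I_1(k_r,\dots,k_1)$ out directly from the definition, applied to the reversed index. Tracking the reversal in both factors of the harmonic product and in the sign exponent, and then reindexing by $i\mapsto r-i$, gives
\[
 I_1(k_r,\dots,k_1)=\sum_{i=0}^{r}(-1)^{k_1+\dots+k_i}(k_r,\dots,k_{i+1})*\sigma(k_1,\dots,k_i).
\]
Since the harmonic product $*$ is commutative, this equals
\[
 I_1(k_r,\dots,k_1)=\sum_{i=0}^{r}(-1)^{k_1+\dots+k_i}\sigma(k_1,\dots,k_i)*(k_r,\dots,k_{i+1}).
\]
Thus $S$ and $I_1(k_r,\dots,k_1)$ agree term by term except that the sign exponents are $k_{i+1}+\dots+k_r$ for the former and $k_1+\dots+k_i$ for the latter.

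Finally, the parity hypothesis closes the gap: because $(k_1+\dots+k_i)+(k_{i+1}+\dots+k_r)=k_1+\dots+k_r$ is assumed even, the two exponents have the same parity for every $i$, so the signs coincide and $S=I_1(k_r,\dots,k_1)$. Combining this with the first step gives $\sigma(I_0(k_1,\dots,k_r))=I_1(k_1,\dots,k_r)+I_1(k_r,\dots,k_1)$, as claimed. The argument is essentially bookkeeping; its only genuine inputs are the Leibniz rule for $\sigma$, which cleanly splits $\sigma(I_0)$ into the two halves, and the evenness assumption, which is precisely what aligns the signs. I expect the one place demanding care to be the reindexing that rewrites $I_1(k_r,\dots,k_1)$, where the reversal must be handled consistently in both tensor factors and in the exponent, but no deeper obstacle arises.
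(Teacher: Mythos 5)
Your proof is correct and is essentially the paper's own argument read in reverse: the paper starts from $I_1(k_1,\dots,k_r)+I_1(k_r,\dots,k_1)$, reindexes the reversed term, uses the evenness of $k_1+\dots+k_r$ to align the signs, and then applies the Leibniz rule $\sigma(\boldsymbol{k}*\boldsymbol{l})=\sigma(\boldsymbol{k})*\boldsymbol{l}+\boldsymbol{k}*\sigma(\boldsymbol{l})$ to recombine into $\sigma(I_0)$, which are exactly your steps in the opposite order. Your reindexing of $I_1(k_r,\dots,k_1)$ and the use of the parity hypothesis both check out, so there is nothing to fix.
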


\begin{proof}
 We have
 \begin{align*}
  &I_1(k_1,\dots,k_r)+I_1(k_r,\dots,k_1)\\
  &=\sum_{i=0}^{r}(-1)^{k_{i+1}+\dots+k_r}(k_1,\dots,k_i)*\sigma(k_r,\dots,k_{i+1})\\
  &\hphantom{{}={}}+\sum_{i=0}^{r}(-1)^{k_1+\dots+k_i}(k_r,\dots,k_{i+1})*\sigma(k_1,\dots,k_i)\\
  &=\sum_{i=0}^{r}(-1)^{k_{i+1}+\dots+k_r}((k_1,\dots,k_i)*\sigma(k_r,\dots,k_{i+1})+\sigma(k_1,\dots,k_i)*(k_r,\dots,k_{i+1}))\\
  &=\sum_{i=0}^{r}(-1)^{k_{i+1}+\dots+k_r}\sigma((k_1,\dots,k_i)*(k_r,\dots,k_{i+1}))\\
  &=\sigma(I_0(k_1,\dots,k_r)).\qedhere
 \end{align*}
\end{proof}

\begin{lem}\label{lem:I_1(a,b,a,b)+I_1(b,a,b,a)}
 We have
 \[
  I_1(\{1,3\}^n)+I_1(\{3,1\}^n)=4(-1)^n\sum_{i=0}^{n-1}(-1)^i(4i+5)*(\{4\}^{n-i-1})
 \]
 for every positive integer $n$.
\end{lem}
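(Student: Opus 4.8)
The plan is to exploit the fact that $\{3,1\}^n$ is exactly the reversal of $\{1,3\}^n$, which lets me apply Lemma~\ref{lem:I_1_reversal_sum} directly. Writing $(k_1,\dots,k_{2n})=\{1,3\}^n$, I have $(k_{2n},\dots,k_1)=\{3,1\}^n$ and $k_1+\dots+k_{2n}=4n$ is even, so Lemma~\ref{lem:I_1_reversal_sum} applies and gives
\[
 I_1(\{1,3\}^n)+I_1(\{3,1\}^n)=\sigma(I_0(\{1,3\}^n)).
\]

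Next I would evaluate $I_0(\{1,3\}^n)$ by Lemma~\ref{lem:I_0(a,b,a,b)} with $a=1$ and $b=3$ (both odd), obtaining $I_0(\{1,3\}^n)=(-1)^n(\{4\}^n)$. Substituting this and using the linearity of $\sigma$ reduces the problem to computing $\sigma(\{4\}^n)$. Since every component of $\{4\}^n$ equals $4$, the explicit formula for $\sigma$ yields
\[
 \sigma(\{4\}^n)=4\sum_{i=1}^{n}(\{4\}^{i-1},5,\{4\}^{n-i})=4\sum_{j=0}^{n-1}(\{4\}^{j},5,\{4\}^{n-1-j}).
\]

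The last step is to recognize the right-hand side as a shuffle of indices: inserting the single letter $5$ into each of the $n$ available slots of $(\{4\}^{n-1})$ produces precisely $(5)\sha(\{4\}^{n-1})$. Applying Lemma~\ref{lem:sha_alternating_sum} with $a=4$, $b=5$, and $n$ replaced by $n-1$ then gives
\[
 (5)\sha(\{4\}^{n-1})=\sum_{i=0}^{n-1}(-1)^i(4i+5)*(\{4\}^{n-i-1}),
\]
and combining the three displays yields $I_1(\{1,3\}^n)+I_1(\{3,1\}^n)=(-1)^n\sigma(\{4\}^n)=4(-1)^n\sum_{i=0}^{n-1}(-1)^i(4i+5)*(\{4\}^{n-i-1})$, as claimed. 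I anticipate no genuine obstacle: the only points requiring care are the reindexing $j=i-1$ and the identification of the single-letter shuffle with the sum over insertion positions, both of which are routine. The real content of the argument is already packaged in the three preceding lemmas, so the proof is essentially their concatenation.
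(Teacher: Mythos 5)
Your proof is correct and follows exactly the same route as the paper's: Lemma~\ref{lem:I_1_reversal_sum} to get $\sigma(I_0(\{1,3\}^n))$, Lemma~\ref{lem:I_0(a,b,a,b)} to reduce to $(-1)^n\sigma(\{4\}^n)$, the explicit formula for $\sigma$ to identify this with $4(-1)^n\,(5)\sha(\{4\}^{n-1})$, and Lemma~\ref{lem:sha_alternating_sum} with $a=4$, $b=5$ to finish. No gaps; the reindexing and the single-letter-shuffle identification are handled correctly.
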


\begin{proof}
 We have
 \begin{align*}
  I_1(\{1,3\}^n)+I_1(\{3,1\}^n)
  &=\sigma(I_0(\{1,3\}^n))\qquad(\text{Lemma~\ref{lem:I_1_reversal_sum}})\\
  &=(-1)^n\sigma(\{4\}^n)\qquad(\text{Lemma~\ref{lem:I_0(a,b,a,b)}})\\
  &=\binom{4+1-1}{1}(-1)^n((\{4\}^{n-1})\sha(5))\\
  &=4(-1)^n\sum_{i=0}^{n-1}(-1)^i(4i+5)*(\{4\}^{n-i-1})\qquad(\text{Lemma~\ref{lem:sha_alternating_sum}}).\qedhere
 \end{align*}
\end{proof}

\begin{proof}[Proof of Theorem~\ref{main0}]
 Lemma~\ref{lem:I_0(a,b,a,b)} shows that
 \[
  \zeta_{\mathcal{S}_1}(\{1,3\}^n)
  =\zeta^*(I_0(\{1,3\}^n))
  =(-1)^n\zeta(\{4\}^n)
  =\frac{2(-4)^n}{(4n+2)!}\pi^{4n}
 \]
 for every nonnegative integer $n$.

 We now compute $\zeta(I_1(\{1,3\}^n))$, the coefficient of $t$ in $\zeta_{\mathcal{S}_2}(\{1,3\}^n)$, for nonnegative integers $n$.
 Since we obviously have $\zeta(I_1(\{1,3\}^0))=0$, we assume that $n\ge1$.
 Lemma~\ref{lem:I_1(a,b,a,b)+I_1(b,a,b,a)} shows that
 \begin{align*}
  &\zeta(I_1(\{1,3\}^n))+\zeta(I_1(\{3,1\}^n))\\
  &=4(-1)^n\sum_{i=0}^{n-1}(-1)^i\zeta(4i+5)\zeta(\{4\}^{n-i-1})\\
  &=-4\sum_{\substack{n_0,n_1\ge0\\n_0+n_1=n}}(-1)^{n_0}\zeta(\{4\}^{n_0})\zeta(4n_1+1)\\
  &=-4\sum_{\substack{n_0,n_1\ge0\\n_0+n_1=n}}\frac{2(-4)^{n_0}}{(4n_0+2)!}\pi^{4n_0}\zeta(4n_1+1).
 \end{align*}
 Since Theorem~\ref{main1} shows that
 \begin{align*}
  &\zeta(I_1(\{3,1\}^n))\\
  &=(-1)^{n+1}\sum_{\substack{n_0,n_1\ge0\\n_0+n_1=2n}}\frac{(-1)^{n_0}2^{n_0-n_1+2}}{(2n_0+2)!}\pi^{2n_0}\zeta(2n_1+1)\\
  &=-(-1)^n\sum_{\substack{n_0,n_1\ge0\\n_0+n_1=2n\\n_0,n_1:\mathrm{even}}}\frac{2^{n_0-n_1+2}}{(2n_0+2)!}\pi^{2n_0}\zeta(2n_1+1)+(-1)^{n}\sum_{\substack{n_0,n_1\ge0\\n_0+n_1=2n\\n_0,n_1:\mathrm{odd}}}\frac{2^{n_0-n_1+2}}{(2n_0+2)!}\pi^{2n_0}\zeta(2n_1+1)\\
  &=-(-1)^n\sum_{\substack{n_0,n_1\ge0\\n_0+n_1=n}}\frac{2^{2n_0-2n_1+2}}{(4n_0+2)!}\pi^{4n_0}\zeta(4n_1+1)+(-1)^n\sum_{\substack{n_0,n_1\ge0\\n_0+n_1=2n\\n_0,n_1:\mathrm{odd}}}\frac{2^{n_0-n_1+2}}{(2n_0+2)!}\pi^{2n_0}\zeta(2n_1+1),
 \end{align*}
 we obtain
 \begin{align*}
  &\zeta(I_1(\{1,3\}^n))\\
  &=-4\sum_{\substack{n_0,n_1\ge0\\n_0+n_1=n}}\frac{2(-4)^{n_0}}{(4n_0+2)!}\pi^{4n_0}\zeta(4n_1+1)\\
  &\hphantom{{}={}}+(-1)^n\sum_{\substack{n_0,n_1\ge0\\n_0+n_1=n}}\frac{2^{2n_0-2n_1+2}}{(4n_0+2)!}\pi^{4n_0}\zeta(4n_1+1)-(-1)^n\sum_{\substack{n_0,n_1\ge0\\n_0+n_1=2n\\n_0,n_1:\mathrm{odd}}}\frac{2^{n_0-n_1+2}}{(2n_0+2)!}\pi^{2n_0}\zeta(2n_1+1)\\
  &=\sum_{\substack{n_0,n_1\ge0\\n_0+n_1=n}}\frac{(-4)^{n_0+1}(2-(-4)^{-n_1})}{(4n_0+2)!}\pi^{4n_0}\zeta(4n_1+1)-(-1)^n\sum_{\substack{n_0,n_1\ge0\\n_0+n_1=2n\\n_0,n_1:\mathrm{odd}}}\frac{2^{n_0-n_1+2}}{(2n_0+2)!}\pi^{2n_0}\zeta(2n_1+1),
 \end{align*}
 as required.
\end{proof}

\section*{Acknowledgements}
This work was supported by JSPS KAKENHI Grant Numbers JP18K03243 and JP18K13392 and by Grant for Basic Science Research Projects from The Sumitomo Foundation.

\end{document}